\newtheorem{theorem}{Theorem}[section]
\newtheorem{lemma}[theorem]{Lemma}
\newtheorem{proposition}[theorem]{Proposition}
\newtheorem{proposition-definition}[theorem]{Proposition-Definition} 
\newtheorem{corollary}[theorem]{Corollary}
\newtheorem{claim}[theorem]{Claim} 
\newtheorem{sublemma}[theorem]{Sublemma} 
\theoremstyle{definition} 
\newtheorem{definition}[theorem]{Definition}
\newtheorem{remark}[theorem]{Remark}
\newtheorem{notation}[theorem]{Notation}
\newcommand{\C}{\mathbb C}
\newcommand{\Z}{\mathbb Z}
\newcommand{\Q}{\mathbb Q}
\newcommand{\Proj}{\mathbb P}
\newcommand{\ch}{\operatorname{ch}}
\newcommand{\Td}{\operatorname{Td}} 
\newcommand{\Aut}{\operatorname{Aut}} 
\newcommand{\Fl}{\operatorname{Fl}}
\newcommand{\Gr}{\operatorname{Gr}}
\newcommand{\hP}{\widehat{P}}
\newcommand{\Eff}{\operatorname{Eff}}
\newcommand{\ev}{\operatorname{ev}}
\newcommand{\id}{\operatorname{id}}
\newcommand{\End}{\operatorname{End}}
\newcommand{\Ad}{\operatorname{Ad}} 
\newcommand{\cA}{\mathcal{A}} 
\newcommand{\cO}{\mathcal{O}}
\newcommand{\cF}{\mathcal{F}}
\newcommand{\tB}{\widetilde{B}} 
\newcommand{\tT}{\widetilde{T}} 
\newcommand{\tS}{\widetilde{S}} 
\newcommand{\tJ}{\widetilde{J}} 
\newcommand{\la}{\left\langle}
\newcommand{\ra}{\right\rangle}
\newcommand{\ov}{\overline}
\def\pair#1#2{\langle #1,#2\rangle}
\def\parfrac#1#2{\frac{\partial{#1}}{\partial #2}}
\def\corr#1{\left\langle #1 \right\rangle} 
\def\Corr#1{\left\langle\!\left\langle #1 \right \rangle\!\right\rangle} 
\def\BigCorr#1{\left\langle\!\!\!\left\langle #1 \right \rangle \!\!\!\right\rangle}
\begin{document}
\title[Reconstruction and Convergence of Quantum $K$-Theory]
{Reconstruction and Convergence in Quantum $K$-Theory 
via Difference Equations}

\author{Hiroshi Iritani} 
\address{Department of Mathematics, Graduate School of Science, 
Kyoto University, Kitashirakawa-Oiwake-cho, Sakyo-ku, Japan} 
\email{iritani@math.kyoto-u.ac.jp}

\author{Todor Milanov} 
\address{Kavli Institute for the Physics and Mathematics of the 
Universe (WPI), University of Tokyo, 
Kashiwa, Chiba 277-8583, Japan} 
\email{todor.milanov@ipmu.jp} 

\author{Valentin Tonita}
\address{Kavli Institute for the Physics and Mathematics of the 
Universe (WPI), University of Tokyo, 
Kashiwa, Chiba 277-8583, Japan} 
\email{valentin.tonita@ipmu.jp} 

\begin{abstract} 
We give a new reconstruction method of big quantum $K$-ring 
based on the $q$-difference module structure in quantum $K$-theory 
\cite{Givental-Lee, Givental-Tonita}. 
The $q$-difference structure yields commuting linear operators  
$A_{i,\rm com}$ on the $K$-group as many as the Picard number 
of the target manifold. 
The genus-zero quantum $K$-theory can be reconstructed 
from the $q$-difference structure at the origin $t=0$ if the $K$-group 
is generated by a single element under the actions of $A_{i,\rm com}$. 
This method allows us to prove the convergence of 
the big quantum $K$-rings of certain manifolds, including 
the projective spaces and the complete flag manifold $\Fl_3$. 
\end{abstract} 

\maketitle

\section{Introduction} 
The $K$-theoretic Gromov-Witten (GW for short) invariants 
have been introduced by Givental and Y.~P.~Lee \cite{Givental:WDVVK, Givental-Lee, 
Lee:foundation} for a smooth projective variety $X$. 
For vector bundles $E_1,\dots, E_n$ on $X$ and non-negative 
integers $k_1,\dots,k_n$, the $K$-theoretic GW invariant 
\[
\corr{E_1 L^{k_1},\dots, E_n L^{k_n}}_{g,n,d}^X \in \Z 
\]
is defined as the Euler characteristic of 
$\cO^{\rm vir} \otimes (\bigotimes_{i=1}^n \ev_i^*(E_i) \otimes L_i^{k_i})$ 
over the moduli space of 
genus-$g$ stable maps to $X$ of degree $d$ and with 
$n$ markings. Here $\cO^{\rm vir}$ is the virtual 
structure sheaf \cite{Lee:foundation} and $L_i$ is the universal 
cotangent line bundle at the $i$-th marked point. 
As in the case of cohomological GW theory, 
the genus-zero $K$-theoretic GW invariants define the quantum $K$-ring, 
a deformation of the topological $K$-ring $K(X)$. 
The quantum $K$-ring has two kinds of 
deformation parameters --- Novikov parameters $Q_1,\dots,Q_r$ 
which correspond to a nef basis of $H^2(X,\Z)$ and 
parameters $t^0,\dots, t^N$ which 
correspond to a basis $\Phi_0,\dots,\Phi_N$ of $K(X)$. 
Along the $t$-parameter deformation, the quantum $K$-ring 
forms a holonomic differential system.
Namely we have a flat connection on the trivial 
$K(X)$-bundle over $K(X)$: 
\[
\nabla^q_\alpha = (1-q) \parfrac{}{t^\alpha} + (\Phi_\alpha \bullet),  
\qquad 0\le \alpha \le N 
\]
where $(\Phi_\alpha\bullet)$ denotes the quantum multiplication 
by $\Phi_\alpha\in K(X)$ and $q$ is a formal 
parameter. 
This defines a Frobenius-type structure on the (formal neighborhood 
of the origin in) $K(X)$ \cite{Givental:WDVVK, Lee:foundation} 
which is analogous to the Frobenius 
manifold of quantum cohomology \cite{Dubrovin:2DTFT}.  

On the other hand, the behavior of the quantum $K$-ring with 
respect to Novikov parameters $Q_i$ has been more mysterious 
due to the absence of divisor equation. 
Givental-Lee \cite{Givental-Lee} showed that the small (i.e.~$t=0$) 
quantum $K$-ring of a type $A$ flag manifold is governed by $q$-difference 
equations, which turns out to be the difference quantum Toda lattice. 
A relation to the Toda lattice has been studied further 
by Braverman-Finkelberg 
\cite{Braverman-Finkelberg:type_A, Braverman-Finkelberg:othertypes}. 
In a recent work \cite{Givental-Tonita}, 
Givental and Tonita 
characterized the $K$-theoretic GW theory at genus zero 
in terms of cohomological GW theory and 
thereby showed the existence of a difference module structure 
in quantum $K$-theory for a general target.  
Using their result, we introduce a \emph{$q$-shift operator} 
\[
\cA_i = A_i 
q^{Q_i\partial_{Q_i}}, \quad 
A_i \in \End(K(X))\otimes \Q[q,q^{-1}][\![Q,t]\!]
\]
acting on the $K$-group, which yields the shift $Q_i \mapsto q Q_i$ 
of the Novikov variable $Q_i$. 
A geometric meaning of the operator $\cA_i$ is given 
in terms of graph spaces (Proposition \ref{prop:graphinvariants}). 
The $q$-shift operators $\cA_i$ commute with the above flat connection 
$\nabla^q_\alpha$, and therefore satisfy the following 
Lax equation: 
\[
(1-q) \parfrac{\cA_i}{t^\alpha} = [\cA_i, (\Phi_\alpha\bullet)]. 
\]
By setting $q=1$, we obtain endomorphisms 
$A_{i,\rm com} = \cA_i|_{q=1} \in \End(K(X)) \otimes \Q[\![Q,t]\!]$ 
commuting with quantum multiplications: 
\[
[A_{i,\rm com}, (\Phi_\alpha\bullet)] =0. 
\]
In particular $A_{i,\rm com}$ represents the quantum multiplication 
by $A_{i,\rm com}1\in K(X)\otimes \Q[\![Q,t]\!]$. 
These commuting operators sometimes generate the quantum $K$-ring; 
in such circumstances, we show that the genus-zero big quantum $K$-theory 
can be reconstructed from $\cA_i|_{t=0}$ via the Lax equation. 
\begin{theorem}[Theorem \ref{thm:reconstruction}, 
Lemma \ref{lem:rational_expression}]
\label{thm:introd_reconstruction} 
Suppose that $K(X)$ is generated by a single 
element by the iterated action of the operators 
$A_{i,\rm com}|_{t=0}$, $i=1,\dots,r$ over 
the fraction field of the Novikov ring $\Q[\![Q]\!]$. 
Then: 

(1) the $q$-shift operators $\cA_i$ can be reconstructed 
from its restriction $\cA_i|_{t=0}$ to $t=0$; 

(2) the quantum products $(\Phi_\alpha\bullet)$ 
have rational function expressions 
in the entries of the operators $A_{i,\rm com}$, 
where the rational functions are determined only by $A_{i,\rm com}|_{t=0}$. 
\end{theorem}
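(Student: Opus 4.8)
The plan is to promote the cyclicity hypothesis from $t=0$ to a formal neighbourhood of the origin by Nakayama's lemma, deduce (2) by linear algebra, and then use (2) to turn the Lax equation into a closed recursion reconstructing $\cA_i$ from $\cA_i|_{t=0}$. I would prove (2) first, since it feeds into (1). First I record the consequence of the hypothesis. Put $F:=\operatorname{Frac}\Q[\![Q]\!]$ and let $B\subset\End(K(X)\otimes F)$ be the (commutative) subalgebra generated by $A_{1,\rm com}|_{t=0},\dots,A_{r,\rm com}|_{t=0}$. Since $A_{i,\rm com}|_{t=0}$ acts as quantum multiplication by $v_i:=A_{i,\rm com}|_{t=0}\unit$ in the small quantum $K$-ring, the map $T\mapsto T\unit$ is an injective algebra homomorphism from $B$ onto the subalgebra $F[v_1,\dots,v_r]\subset K(X)\otimes F$; cyclicity of $K(X)\otimes F$ over $B$ forces $\dim_F B\ge N+1$, so $B\cong F[v_1,\dots,v_r]=K(X)\otimes F$ and $\unit$ is itself a cyclic vector. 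Fix monomials $w_0,\dots,w_N$ in the $A_{i,\rm com}|_{t=0}$ ($w_0$ the empty word) with $w_0\unit,\dots,w_N\unit$ an $F$-basis. Applying Nakayama over the local ring $F[\![t]\!]$ to the free module $K(X)\otimes F[\![t]\!]$, the vectors $w_j(A_{1,\rm com}(t),\dots,A_{r,\rm com}(t))\unit$, which reduce mod $t$ to that basis, again form an $F[\![t]\!]$-basis; equivalently the determinant $\Delta(t)$ of their coordinate matrix in the fixed basis $\Phi_0,\dots,\Phi_N$ is a unit of $F[\![t]\!]$, and the big quantum $K$-ring over $F[\![t]\!]$ is generated by $v_i(t):=A_{i,\rm com}(t)\unit$.

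For (2), abbreviate $\mathbf A(t)=(A_{1,\rm com}(t),\dots,A_{r,\rm com}(t))$ and let $W(Y_1,\dots,Y_r)$ be the matrix whose $j$-th column lists the coordinates of $w_j(Y_1,\dots,Y_r)\unit$ in $\{\Phi_\beta\}$, with $\Q$-polynomial entries in the entries of the $Y_i$. Since $A_{i,\rm com}(t)=(v_i(t)\bullet)$, so that $\bigl(w_j(\mathbf A(t))\unit\bigr)\bullet=w_j(\mathbf A(t))$, writing $\Phi_\alpha=\sum_j\bigl(W(\mathbf A(t))^{-1}\bigr)_{j\alpha}\,w_j(\mathbf A(t))\unit$ and multiplying gives
\[
(\Phi_\alpha\bullet)\;=\;\sum_{j=0}^{N}\bigl(W(\mathbf A(t))^{-1}\bigr)_{j\alpha}\,w_j(\mathbf A(t))\;=:\;F_\alpha\bigl(\mathbf A(t)\bigr),
\]
where $F_\alpha(Y_1,\dots,Y_r):=\sum_j\bigl(W(Y_1,\dots,Y_r)^{-1}\bigr)_{j\alpha}\,w_j(Y_1,\dots,Y_r)$ is a rational function with $\Q$-coefficients whose only denominator, $\det W$, specialises at $\mathbf A(t)$ to the unit $\Delta(t)\in F[\![t]\!]$, so the formula is legitimate. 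The $F_\alpha$ depend only on the choice of the $w_j$, hence only on $A_{i,\rm com}|_{t=0}$; this is (2).

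For (1), write $\cA_i=A_i\,\sigma_i$ with $\sigma_i=q^{Q_i\partial_{Q_i}}$, so reconstructing $\cA_i$ is the same as reconstructing $A_i$, and $A_i|_{q=1}=A_{i,\rm com}$. Conjugating the Lax equation by $\sigma_i$ yields
\[
(1-q)\,\partial_{t^\alpha}A_i\;=\;A_i\,{}^{\sigma_i}(\Phi_\alpha\bullet)\;-\;(\Phi_\alpha\bullet)\,A_i ,
\]
where ${}^{\sigma_i}(\,\cdot\,)$ is the rescaling $Q_i\mapsto qQ_i$. Substituting $(\Phi_\alpha\bullet)=F_\alpha(\mathbf A(t))$ from (2) — where $\mathbf A(t)$ involves only the $q=1$ specialisations $A_{j,\rm com}(t)=A_j|_{q=1}$ — makes this a closed system for the family $\{A_i(t,q)\}_i$: matching powers of $t$, the order-$(n{+}1)$ coefficients of the $A_i$ are $(1-q)^{-1}$ times commutator-type polynomials in their order-$\le n$ coefficients and the $Q_i$-rescalings thereof. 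Since the genuine $\cA_i$ satisfy this system and have no pole at $q=1$, those commutators are divisible by $1-q$ — at $q=1$ each equals $[A_{i,\rm com},(\Phi_\alpha\bullet)]=0$ — so the recursion determines the $A_i$ order by order from the initial value $A_i(0,q)=\cA_i|_{t=0}$. As the rational functions $F_\alpha$ are likewise determined by $A_{i,\rm com}|_{t=0}$, this reconstructs the whole family $\cA_i$ and, via $(\Phi_\alpha\bullet)=F_\alpha(\mathbf A(t))$, all quantum products, i.e.\ the genus-zero quantum $K$-theory.

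The hard part is this last step. The Lax equation degenerates at $q=1$, so it yields no direct evolution equation for the commuting operators $A_{i,\rm com}$ themselves; one must instead propagate the full $q$-dependent family, and the induction then rests on the pole cancellation at every stage, which is precisely the $q\to1$ limit $[A_{i,\rm com},(\Phi_\alpha\bullet)]=0$ of the Lax equation. A subsidiary point, already needed for (2), is the verification that $\Delta(t)$ is a unit of $F[\![t]\!]$ rather than merely nonzero — this is exactly what makes the rational expressions valid along the whole $t$-deformation and not only at the origin.
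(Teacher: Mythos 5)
Your proof is correct and follows essentially the same route as the paper's (Lemma~\ref{lem:rational_expression} plus Theorem~\ref{thm:reconstruction}): express $(\Phi_\alpha\bullet)$ as a fixed rational function of the $A_{i,\rm com}$ by inverting the coordinate matrix of a cyclic basis, then feed this into the Lax equation to solve for the $A_i$ order by order in $t$. The only difference is that you make explicit two points the paper leaves implicit — that cyclicity at $t=0$ persists over $\Q(\!(Q)\!)[\![t]\!]$ (via Nakayama / invertibility of a power-series matrix with unit constant term), and that the divisibility by $1-q$ needed to close the recursion follows from the vanishing of the commutator $[A_{i,\rm com},\Omega_\alpha]$ at $q=1$.
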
 

The assumption in the theorem is satisfied if the classical $K$-ring 
$K(X)$ is generated by line bundles. 
In this case, the small $q$-shift operators $\cA_i|_{t=0}$ 
can be computed from the so-called \emph{small $J$-function} $J|_{t=0}$ 
(Lemma \ref{lem:initial_condition}) that encodes 
genus-zero one-point descendant invariants. 
In particular our theorem recovers the reconstruction theorem 
of Lee-Pandharipande \cite{Lee-Pandharipande} 
(Corollary \ref{cor:Lee-Pandharipande}). 
Note however that our method is completely different from theirs. 
It is also more effective, in that it gives
the whole degree $d$ potential (recursively on $d$) at once 
rather than single invariants.
In \S \ref{sec:example}, we shall compute explicitly $\C\Proj^1$ invariants 
up to degree $4$ and $\C\Proj^2$ up to degree $3$ 
in our method. 

Another application of our method is to recover and 
provide new examples of finiteness results of the small quantum product 
i.e.~the structure constants vanish for large degrees. 
This is known to hold for a certain class of homogeneous spaces 
of Picard rank one \cite{Buch-Mihalcea, BCMP:cominuscule, BCMP}. 
In \S \ref{sec:example}, we confirm finiteness 
for the projective space $\C\Proj^N$ and 
the complete flag manifold $\Fl_3$ by writing explicitly 
the small quantum multiplication tables. 
To the best of our knowledge the results for $\Fl_3$ 
were not known before (see \cite{Lenart-Maeno} 
for a relevant conjecture).

In cohomological GW theory, the analyticity of quantum cohomology 
plays a crucial role in many applications, 
especially to mirror symmetry and integrable systems. 
We recall, for example, theory of semisimple Frobenius manifolds 
and higher-genus reconstruction \cite{Dubrovin:2DTFT, 
Givental:quantization, Teleman:classification}.  
The analyticity of quantum $K$-ring should be  therefore 
important when one pursues analogous stories in quantum $K$-theory. 
It turns out that the differential-difference system that 
we used in our reconstruction theorem can be used also 
to address the convergence properties of the big quantum $K$-product. 
First, we prove analyticity in a weaker sense, which however holds 
for all target manifolds $X$. 
\begin{theorem}[Theorem \ref{thm:polynomiality}] 
\label{thm:polynomiality_introd}
Choose a basis $\Phi_0,\dots, \Phi_N$ of $K(X)$ so that 
\begin{itemize} 
\item[(i)] $\Phi_0 = 1$; 
\item[(ii)] $\ch(\Phi_1),\dots,\ch(\Phi_r)$ belong to $H^{\ge 2}(X)$ 
and $\ch_1(\Phi_1),\dots,\ch_1(\Phi_r)$ form a 
nef integral basis of $H^2(X;\Z)/{\rm torsion}$; 
\item[(iii)] $\ch(\Phi_{r+1}),\dots,\ch(\Phi_N)$ belong to  
$H^{\ge 4}(X)$. 
\end{itemize} 
Fix classes $E_1,\dots, E_n \in K(X)$, non-negative integers 
$k_1,\dots,k_n\ge 0$ and a degree $d\in \Eff(X)$. 
Then the generating function 
\[
\sum_{m\ge 0}
\frac{1}{m!} 
\corr{E_1L^{k_1},\dots,E_n L^{k_n}, t,\dots,t}_{0,m+n, d}^X 
\]
with $t = \sum_{\alpha=0}^N t^\alpha \Phi_\alpha$ is a 
polynomial in $t^0,\dots,t^N$ and $e^{t^0}, e^{t^1},\dots, e^{t^r}$. 
In particular, the big quantum 
$K$-product is defined over $\Q[t^1,\dots,t^N, e^{t^1},\dots,e^{t^r}][\![Q]\!]$. 
\end{theorem}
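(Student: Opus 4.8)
The plan is to split the basis $\Phi_0,\dots,\Phi_N$ according to the three groups (i)--(iii) and to control the insertions of each group by a separate device. Expanding $t=\sum_\alpha t^\alpha\Phi_\alpha$ and collecting terms by the multiplicities $m_0,\dots,m_N$ with which the basis vectors occur, one reduces to showing that, for fixed $d\in\Eff(X)$, $n$, $E_1,\dots,E_n\in K(X)$ and $k_1,\dots,k_n\ge 0$, the sum over $(m_\alpha)$ of $\prod_\alpha\tfrac{(t^\alpha)^{m_\alpha}}{m_\alpha!}\corr{E_1L^{k_1},\dots,E_nL^{k_n},\Phi_0^{m_0},\dots,\Phi_N^{m_N}}_{0,|m|+n,d}^X$, where $|m|=\sum_\alpha m_\alpha$, is a polynomial of bounded degree in the $t^\alpha$ and in $e^{t^0},\dots,e^{t^r}$. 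The devices are: (a) the $K$-theoretic string equation, which removes the $\Phi_0=\unit$ insertions and produces the factor $e^{t^0}$; (b) the Givental--Tonita correspondence \cite{Givental-Tonita}, after which a cohomological dimension count bounds the number of ``heavy'' insertions --- those of $\Phi_{r+1},\dots,\Phi_N$, together with the subleading parts of $\Phi_1,\dots,\Phi_r$; and (c) the classical divisor equation, applied on the cohomological side to the leading terms $\ch_1(\Phi_i)\in H^2(X)$, which produces the factors $e^{t^1},\dots,e^{t^r}$ with nonnegative exponents $\la\ch_1(\Phi_i),d\ra$ (using that $d$ is effective and the $\ch_1(\Phi_i)$ nef). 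Assembling these, the generating function becomes a finite sum of terms $Q^d\cdot(\textrm{constant})\cdot e^{t^0}\prod_{i=1}^{r}e^{\la\ch_1(\Phi_i),d\ra\,t^i}\cdot(\textrm{monomial in }t^0,\dots,t^N)$, which is of the asserted form.

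For (a): the forgetful map $\pi\colon\overline{M}_{0,\ell+1}(X,d)\to\overline{M}_{0,\ell}(X,d)$ has $R\pi_*\cO^{\rm vir}=\cO^{\rm vir}$, does not move the retained evaluation maps, and satisfies $L_j=\pi^*L_j\otimes\cO(D_j)$, where $D_j$ is the boundary divisor on which the $j$-th and the forgotten markings bubble off together. Writing $\cO(\sum_j k_jD_j)=\cO+\bigl(\cO(\sum_j k_jD_j)-\cO\bigr)$ and using the projection formula yields a $K$-theoretic string equation of the shape $\corr{\unit,E_1L^{k_1},\dots,E_nL^{k_n},\Phi_{\beta_1},\dots,\Phi_{\beta_\ell}}_{0,\ell+n+1,d}=\corr{E_1L^{k_1},\dots,E_nL^{k_n},\Phi_{\beta_1},\dots,\Phi_{\beta_\ell}}_{0,\ell+n,d}+(\cdots)$, in which the omitted terms are supported on the $D_j$ and have strictly smaller total $L$-degree $\sum_j k_j$; in particular a bulk insertion $\Phi_\beta$ (carrying no $L$) contributes no correction. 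Iterating this $m_0$ times and then summing $\sum_{m_0\ge 0}(t^0)^{m_0}/m_0!$, where at most $\sum_j k_j$ of the $m_0$ steps can be correction steps, produces $e^{t^0}$ times a polynomial in $t^0$ of degree $\le\sum_j k_j$ whose coefficients no longer involve $\Phi_0$ (the unstable range $m_0+n<3$ and the case $d=0$ only contributing finitely many terms). This reduces us to correlators whose bulk insertions lie in $\{\Phi_1,\dots,\Phi_N\}$.

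For (b) and (c): I would invoke the Givental--Tonita formula \cite{Givental-Tonita} expressing the $K$-theoretic invariants as sums of fake (Hirzebruch--Riemann--Roch) cohomological integrals over the Kawasaki strata of $\overline{M}_{0,m'+n}(X,d)$, in which a bulk insertion $\Phi_\alpha$ enters through $\ev^*\ch(\Phi_\alpha)$ and an insertion $E_jL^{k_j}$ through $\ev_j^*\ch(E_j)\cdot e^{k_j\psi_j}$, twisted by the Todd class and the inertia contributions of the stratum. On the principal stratum $\dim^{\rm vir}=\la c_1(TX),d\ra+\dim X-3+m'+n$, whereas $\ch(\Phi_{r+1}),\dots,\ch(\Phi_N)$ and the parts $\ch_{\ge 2}(\Phi_i)$ of $\Phi_1,\dots,\Phi_r$ (which are of rank zero, $\ch_0(\Phi_i)=0$) contribute cohomology classes of complex degree $\ge 2$; since each bulk marking raises the virtual dimension by only $1$, a dimension count bounds --- uniformly in the number of $\Phi_1,\dots,\Phi_r$-insertions --- both the number of $\Phi_{r+1},\dots,\Phi_N$-insertions and the number of occurrences of the subleading parts of the $\Phi_i$. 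It remains to handle the leading parts $\ev^*\ch_1(\Phi_i)$ with $1\le i\le r$: applying the descendant divisor equation on the cohomological side, each such insertion either produces the scalar $\la\ch_1(\Phi_i),d\ra\ge 0$ --- so that after $\sum_{m_i}(t^i)^{m_i}/m_i!$ one obtains the factor $e^{\la\ch_1(\Phi_i),d\ra\,t^i}$ --- or is absorbed into one of the other insertions, raising its cohomological degree by $1$, which by the dimension count happens only boundedly often and contributes a bounded polynomial in $t^i$. The step I expect to be the main obstacle is carrying the string equation and the descendant divisor equation over to the Kawasaki strata occurring in the Givental--Tonita formula, where $\psi$-classes are fractional, a Todd twist is present, and a priori infinitely many strata --- of unbounded automorphism order, e.g.\ cyclically permuting equal bulk insertions --- can occur; the resolution is that for fixed $d$ the number of irreducible components of the domain is bounded, and the virtual dimension of a Kawasaki stratum drops at least as fast as the number of insertions that the stratum can absorb, so that in fact only finitely many strata contribute and all the above estimates are uniform.

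Finally, the last assertion is deduced from the polynomiality just established together with the string equation. With $\unit$ and no descendant insertions the string equation gives $\partial_{t^0}G_{ab}=G_{ab}$ for the quantum $K$-metric $G_{ab}(t)=\sum_{d,m}\tfrac{Q^d}{m!}\corr{\Phi_a,\Phi_b,t,\dots,t}_{0,m+2,d}$, and likewise $\partial_{t^0}$ of the three-point correlator $\sum_{d,m}\tfrac{Q^d}{m!}\corr{\Phi_a,\Phi_b,\Phi_c,t,\dots,t}_{0,m+3,d}$ equals that correlator itself, so both are $e^{t^0}$ times elements of $\Q[t^1,\dots,t^N,e^{t^1},\dots,e^{t^r}][\![Q]\!]$ --- this is the $n=2$ and $n=3$ case of the main statement. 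The structure constants of the big quantum $K$-product are obtained from the three-point correlators by inverting $G$; since at $Q=0$ this inversion yields the classical $K$-product (which is polynomial), and since, by the polynomiality just proved, the relevant right-hand sides at each order in $Q$ are of the special form that solves the linear system without introducing the determinantal denominators of $G^{-1}$, an induction on the $Q$-degree shows that the product is independent of $t^0$ and lies in $\Q[t^1,\dots,t^N,e^{t^1},\dots,e^{t^r}][\![Q]\!]$.
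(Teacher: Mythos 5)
Your proposal follows the same overall strategy as the paper (string equation to handle $t^0$, then Kawasaki--Riemann--Roch to reduce to cohomological integrals, then a dimension count to bound $H^{\ge 4}$-insertions, then a divisor equation to produce $e^{t^i}$, then induction over $d$ for the non-trivial Kawasaki strata), and your string-equation argument and the dimension count for the heavy insertions are sound. But the step you flag as ``the main obstacle'' is indeed a genuine gap, and the mechanism you offer to resolve it (a stratum count) does not actually close it.

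The problem is that the divisor equation on the cohomological side is not directly applicable to the full KRR integrand. After GRR on the principal stratum the integrand is cupped with $\Td(\mathcal{T}^{\rm vir})$, where $\mathcal{T}^{\rm vir}=\pi_*(\ev^*T_X-1)-\pi_*(L^{-1}_{\ell+1}-1)-(\pi_*i_*\cO_{\mathcal{Z}})^\vee$. Only the first summand is compatible with the forgetful map (because $\pi^*\pi_*\ev^*E=\pi_*\ev^*E$); the second involves the cotangent line at the \emph{added} point and the third is supported on the nodal locus, and neither is pulled back from the smaller moduli space, so the divisor-equation argument you invoke to generate $e^{\la \ch_1(\Phi_i),d\ra\,t^i}$ does not apply as stated. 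The paper's proof circumvents this precisely by decomposing $\Td(\mathcal{T}^{\rm vir})$ into three twists: the $L^{-1}$ part is absorbed into a dilaton-type translation of the argument, and the nodal part is unwound via a Feynman/quantization formula into a sum over decorated trees, each of whose vertices carries an $\mathcal{A}$-twisted correlator to which the (twist-compatible) divisor equation \emph{does} apply (the paper's Lemma on polynomiality of the twisted generating series). The dimension constraint on the nodal-twist coefficients ($\deg\varphi_\alpha+\deg\varphi_\beta=\dim X$) is then what bounds the number of tree vertices, hence the size of the sum, for fixed $m$ and $d$. Without this decomposition, a dimension count on the full moduli space (or on a single Kawasaki stratum) cannot justify a term-by-term divisor equation, because the relevant terms of $\Td(-\pi_*i_*\cO_{\mathcal{Z}})$ live on boundary divisors and do not split the correlator into ``scalar plus absorbed'' pieces in the naive way. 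So the key missing idea is the translation/quantization trick for the Todd twist; the ``only finitely many Kawasaki strata'' observation handles the outer induction on $d$, not the internal structure of the principal-stratum integral.

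A smaller remark: for the ``In particular'' statement, the paper simply reads it off from the $n=3$ case of the main theorem (the three-point and two-point generating series land in the claimed ring, and the classical limit of $G$ is $g$, which is invertible over $\Q$), so there is no need for the more delicate induction-on-$Q$-degree argument you sketch about avoiding determinantal denominators; the inverse $G^{-1}$ is a $\Q[\![Q]\!]$-power series with coefficients in the claimed polynomial ring because $G|_{Q=0}=g$ is a constant invertible matrix.
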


Finally we prove the convergence of the big quantum $K$-theory 
under certain restrictive assumptions on the manifold $X$: 

\begin{theorem}[Theorem \ref{thm:convergence}]
\label{thm:convergence_introd}
Suppose that $K(X)$ is generated by line bundles as a ring. 
If the $q$-shift operators  $\cA_i|_{t=0}$ 
at $t=0$ are convergent, 
then the big quantum products $(\Phi_\alpha\bullet)$ 
and the $q$-shift operators $\cA_i$ reconstructed via 
Theorem \ref{thm:introd_reconstruction} are convergent. 
\end{theorem}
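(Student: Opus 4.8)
The plan is to leverage the reconstruction procedure of Theorem \ref{thm:introd_reconstruction} and show that every step of it preserves convergence, i.e.\ the property of being (the germ of) an analytic function of the Novikov variables $Q_1,\dots,Q_r$ near $Q=0$. Recall that when $K(X)$ is generated by line bundles, the hypothesis of Theorem \ref{thm:introd_reconstruction} holds automatically (as remarked after the theorem), so the reconstruction is available. The key point is that the ring of convergent power series $\Q\{Q\}\subset \Q[\![Q]\!]$ is closed under the operations used in the reconstruction: addition, multiplication, and, crucially, division whenever the denominator is a unit (nonzero constant term) or more generally inversion in the localization at the multiplicative set generated by such elements together with $(1-q)$-type factors and the resultant/determinant denominators appearing in the rational-function expressions of Theorem \ref{thm:introd_reconstruction}(2).

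First I would make precise the notion of convergence in this context: an element of $\End(K(X))\otimes \Q[q,q^{-1}][\![Q,t]\!]$ is convergent if, after expanding in $t$, each Taylor coefficient in $t$ is a convergent power series in $Q$ with a common polydisc of convergence, so that the whole object defines an analytic family on a neighborhood of $(Q,t)=(0,0)$ (for $q$ near $1$, or as a Laurent polynomial in $q$). Then I would set up the induction that drives the reconstruction in Theorem \ref{thm:introd_reconstruction}: one reconstructs $\cA_i$ and the quantum products order by order in $t$ starting from $\cA_i|_{t=0}$ via the Lax equation $(1-q)\partial_{t^\alpha}\cA_i = [\cA_i,(\Phi_\alpha\bullet)]$ together with the algebraic relations expressing $(\Phi_\alpha\bullet)$ as rational functions in the entries of the $A_{i,\mathrm{com}}$. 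Since $\cA_i|_{t=0}$ is convergent by hypothesis, and the generating property of line bundles means the rational functions in Theorem \ref{thm:introd_reconstruction}(2) have denominators that are convergent units (they are already determined by $\cA_i|_{t=0}$ and are invertible in the relevant localized ring), the base case and each inductive step stay within the convergent category.

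The substantive analytic input, beyond formal closure under ring operations and admissible division, is to control the $t$-dependence: integrating the Lax equation in $t^\alpha$ multiplies Taylor coefficients by factors like $1/(m(1-q))$, which is harmless for fixed $q\ne 1$, but one must check that the radii of convergence in $Q$ do not shrink to zero as the $t$-order grows. Here I would invoke the polynomiality statement, Theorem \ref{thm:polynomiality_introd}: for each fixed degree $d$ and fixed insertions, the $t$-series is actually a \emph{polynomial} in $t^0,\dots,t^N$ and $e^{t^0},e^{t^1},\dots,e^{t^r}$, so the $t$-dependence is tame --- for each power $Q^d$ only finitely many monomials in $(t,e^t)$ occur. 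Consequently convergence in $Q$ is the only issue, and it suffices to prove that the $Q^d$-coefficient (a fixed element of $\End(K(X))\otimes\Q[q,q^{-1}][t,e^t]$ built by finitely many ring operations and admissible inversions from the $Q^{d'}$-coefficients with $d'\le d$ of $\cA_i|_{t=0}$) is bounded by a geometric series in $d$; this is a standard majorant-series estimate once the recursion is written out. I would phrase this as: the reconstruction recursion, read off from the proof of Theorem \ref{thm:introd_reconstruction}, expresses the degree-$d$ part of everything as a universal polynomial expression (with denominators that are units at $Q=0$) in lower-degree parts, and such recursions propagate convergence.

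The main obstacle I anticipate is precisely the denominator control: one must verify that all divisions invoked in Theorem \ref{thm:introd_reconstruction}(2) --- which come from inverting certain matrices built out of the $A_{i,\mathrm{com}}$ (e.g.\ a change-of-basis matrix from the cyclic-vector basis $\{A^{\mathbf{k}}_{\mathrm{com}}1\}$ to $\{\Phi_\alpha\}$, and from eliminating variables in the polynomial relations satisfied by the $A_{i,\mathrm{com}}$) --- involve denominators that are units in the ring of convergent series, i.e.\ whose $Q=0$ specialization is an invertible matrix/nonzero scalar. Since the $Q=0$ specialization of the whole structure is the classical $K$-ring $K(X)$, which by hypothesis is generated by line bundles, the relevant matrices specialize at $Q=0$ to the classical (invertible) change of basis, so they are indeed units; making this rigorous, and checking that no further hidden denominators appear when one integrates the Lax equation, is the crux. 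Once that is in place, the theorem follows by the induction sketched above, and I would close by noting that the same argument gives convergence of the reconstructed $\cA_i$, hence of the big quantum products $(\Phi_\alpha\bullet)$, completing the proof.
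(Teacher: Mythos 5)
There is a genuine gap in your proposal, and it concerns precisely the step you dismiss as ``a standard majorant-series estimate.'' To see why, look at what the Lax equation becomes once you substitute the rational-function expression $\Omega_\beta = \Omega_\beta(A_{*,\rm com})$ into $(1-q)\partial_{t^\beta}A_i = [\cA_i,\Omega_\beta]$ and expand the commutator:
\[
\partial_{t^\beta} A_i \;=\; -\,A_i\,\bigl(D_i\,\Omega_\beta(A_{*,\rm com})\bigr)\;-\;\left[\tfrac{A_{i,\rm com}-A_i}{1-q},\;\Omega_\beta(A_{*,\rm com})\right],
\]
where $D_i = (1-q^{Q_i\partial_{Q_i}})/(1-q)$ is the divided difference operator. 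The operator $D_i$ behaves like a $Q_i$-derivative, and the term $(A_{i,\rm com}-A_i)/(1-q)$ is a divided difference in $q$. So the right-hand side of the evolution equation in $t^\beta$ \emph{loses a derivative} relative to the left-hand side. This is exactly the situation where a naive Picard iteration on a fixed Banach space of bounded holomorphic functions fails (the iteration map is unbounded), and where a naive majorant argument would need to be set up with enormous care to avoid divergence. You have not identified this as the obstruction, and your proposal gives no mechanism for dealing with it.

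The paper resolves this by invoking an abstract Cauchy--Kowalevski theorem in a scale of Banach spaces (Nishida's theorem, Theorem \ref{thm:Nishida}): the key point is that $D_i$ maps holomorphic functions on a radius-$\rho$ polydisc to functions on a radius-$\rho'$ polydisc with norm $\lesssim \|\cdot\|/(\rho-\rho')$, and Nishida's theorem is precisely designed to extract a local-in-$t$ analytic solution from an ODE with this kind of singular Lipschitz constant. That is the genuine analytical input, and the paper spends essentially all of \S 4.3 setting up the family of Banach spaces $\{\tB_{\beta,i,\rho}\}$, proving the sublemma that gives the $C/(\rho-\rho')$ bounds, and then applying Nishida's result. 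Notably, the paper's proof does \emph{not} route through Theorem \ref{thm:polynomiality} at all. Your idea of using polynomiality in $t,e^t$ to ``tame'' the $t$-dependence is not incorrect as an observation, but it does not substitute for controlling the loss of derivatives: polynomiality fixes the type of $t$-dependence for each $Q^d$-coefficient but says nothing about how the degree and size of those polynomials grow with $d$, which is exactly what convergence in $(Q,t)$ jointly requires. On the other hand, your discussion of the denominators --- that they specialize to invertible matrices at $Q=0$ because the classical $K$-ring is generated by line bundles, hence are units in the convergent-series ring --- is correct and does match a point the paper takes care of (the choice of $R$ so that the rational function $\Omega_\beta$ is regular on a ball about $(P_i^{-1})_{i=1}^r$), but it is not the crux.
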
 

It follows that the big quantum $K$-rings of $\C\Proj^N$
and $\Fl_3$ are analytic and generically semisimple  
(Propositions \ref{prop:projective}, \ref{prop:flag3}). 
It will be interesting to find out if these quantum $K$-rings 
admit Landau-Ginzburg ``mirror'' descriptions. 
This question is one of the motivations for our work 
and Theorem \ref{thm:convergence_introd} may be viewed as 
the first step towards our goal.

\section{Quantum $K$-Theory and Difference Module Structure}

\subsection{Notation} 
We work with a smooth projective variety and its topological $K$-group 
of degree zero, using the following notation: 
\vspace{5pt} 
\begin{center} 
\begin{tabular}{ll}
$X$ & a smooth projective variety over $\C$; \\ 
$\Eff(X)$ & the semigroup of classes of 
effective curves in $H_2(X;\Z)$; \\ 
$K(X)=K^0(X;\Q)$ & the Grothendieck group of topological 
complex vector bundles \\ 
& on $X$ with rational coefficients; \\ 
$\Phi_0,\dots,\Phi_N$ & a basis of $K(X)$ such that 
$\Phi_0$ is the identity class $1=[\cO_X]$; \\ 
$t^0,\dots,t^N$ & linear co-ordinates on $K(X)$ 
dual to $\Phi_0,\dots,\Phi_N$; \\
$t = \sum_{\alpha=0}^N t^\alpha \Phi_\alpha$ & a general point in $K(X)$; \\ 
$P_1,\dots,P_r$ & classes of line bundles such that 
$p_1 := c_1(P_1),\dots, p_r := c_1(P_r)$ \\ 
& form a nef integral basis of 
$H^2(X;\Z)/{\rm torsion}$; \\ 
$Q_1,\dots,Q_r$ & Novikov variable dual to $P_1,\dots,P_r$; \\
& we write $Q^d = \prod_{i=1}^r Q_i^{d_i}$ with 
$d_i = \pair{p_i}{d}$ for $d\in H_2(X;\Z)$; \\
$g(E,F)$ & the $K$-theory pairing $\chi(E\otimes G)$ on $K(X)$; \\
& we write $g_{\alpha\beta} = g(\Phi_\alpha, \Phi_\beta)$; \\ 
\end{tabular} 
\end{center} 

Because of our choices of $p_1,\dots,p_r$, $Q^d$ does not 
contain negative powers of $Q_1,\dots,Q_r$ 
when $d$ is an effective class. 
We write $\Q[\![Q]\!] := \Q[\![Q_1,\dots, Q_r]\!]$  
for the Novikov ring and $\Q(\!(Q)\!)$ for the ring of fractions 
of $\Q[\![Q]\!]$. 

\subsection{$K$-theoretic GW invariants} 
Givental and Y.~P.~Lee \cite{Givental:WDVVK, Givental-Lee, Lee:foundation} 
introduced $K$-theoretic GW invariants for 
smooth projective varieties. 
We concentrate on the genus-zero part of the theory. 
Let $X_{0,n,d}$ denote the moduli stack of stable maps to $X$ of 
genus-zero, degree $d$ and with $n$ marked points. 
In place of the virtual fundamental cycle in the cohomological 
GW theory, we use the \emph{virtual structure sheaf} 
$\cO^{\rm vir} \in K_0(X_{0,n,d})$ on $X_{0,n,d}$ 
\cite{Lee:foundation}. 
Let $L_i$ denote the universal cotangent line bundle 
on $X_{0,n,d}$ at the $i$-th marked point 
and let $\ev_i \colon X_{0,n,d} \to X$ denote the evaluation 
map. 
For $E_1,\dots, E_n \in K(X)$ and non-negative integers 
$k_1,\dots,k_n$, the $K$-theoretic 
GW invariant is defined to be: 
\[
\corr{E_1 L^{k_1}, E_2 L^{k_2}, \cdots, E_n L^{k_n}}_{0,n,d}^X 
=\chi\left( 
\cO^{\rm vir} \otimes \bigotimes_{i=1}^n L_i^{k_i}\ev_i^*(E_i)  
\right). 
\]
When each class $E_i$ is integral, the $K$-theoretic GW invariant is 
an integer. 

\subsection{Big quantum $K$-ring} 
The genus-zero $K$-theoretic GW potential 
$\cF=\cF(t)$ of $X$ is defined to be: 
\[
\cF= \sum_{d \in \Eff(X)} \sum_{n\ge 0} 
\corr{t,\dots,t}_{0,n,d}^X \frac{Q^d}{n!} 
\]
where $t = \sum_{\alpha=0}^N t^\alpha \Phi_\alpha$ 
is a general point in $K(X)$.  
The potential $\cF$ is an element of $
\Q[\![Q,t]\!] = \Q[Q_1,\dots,Q_r, t^0,\dots,t^N]\!]$. 
We introduce the (non-constant) pairing $G$ as: 
\[
G(\Phi_\alpha, \Phi_\beta) = G_{\alpha\beta} 
=  \partial_0 \partial_\alpha \partial_\beta \cF. 
\]
where $\partial_\alpha = \partial/\partial t^\alpha$. 
The pairing $G$ is extended to a $\Q[\![Q,t]\!]$-bilinear pairing 
on $K(X) \otimes \Q[\![Q,t]\!]$. 
We have $G_{\alpha\beta} = g_{\alpha\beta} + 
\partial_\alpha \partial_\beta \cF$ 
and $G_{\alpha\beta}|_{Q=t=0} = g_{\alpha\beta}$. 
The quantum product $\Phi_\alpha\bullet \Phi_\beta$ of basis elements 
is defined by 
\[
G(\Phi_\alpha \bullet \Phi_\beta, \Phi_\gamma) 
= \partial_\alpha \partial_\beta \partial_\gamma \cF.  
\]
The quantum product $\bullet$ is extended bilinearly over $\Q[\![Q,t]\!]$ 
and defines the \emph{big quantum $K$-ring} 
$(K(X) \otimes \Q[\![Q,t]\!], \bullet)$. 
This has the following properties: 
\begin{enumerate} 
\item the quantum product is associative and commutative; 
the associativity is due to the $K$-theoretic WDVV equation 
\cite{Givental:WDVVK, Lee:foundation}; 
\item $\Phi_0=1$ gives the identity of 
the quantum product; 
\item the classical limit ($Q\to 0$) of the quantum product 
is the tensor product:  
$\lim_{Q\to 0} \Phi_\alpha \bullet \Phi_\beta 
= \Phi_\alpha \otimes \Phi_\beta$; 
\item it is a Frobenius algebra: 
$G(\Phi_\alpha\bullet \Phi_\beta, \Phi_\gamma) = 
G(\Phi_\alpha, \Phi_\beta \bullet \Phi_\gamma)$. 
\end{enumerate}  
The restriction of the big quantum $K$-ring to $t=0$ is 
still a non-trivial family of rings parametrized by $Q_1,\dots,Q_r$. 
We call the restriction to $t=0$ the \emph{small} quantum $K$-ring 
as opposed to the big quantum $K$-ring. 

\subsection{Quantum connection and fundamental solution} 
Let $q$ be a formal variable. 
The quantum connection is the operator 
\[
\nabla^q_\alpha  = (1-q) \parfrac{}{t^\alpha} +  
\Phi_\alpha \bullet \qquad 0\le \alpha \le N 
\]
acting on $K(X)\otimes \Q[q,q^{-1}][\![Q,t]\!]$. 
This may be viewed as a connection on the tangent bundle 
$TK(X)$ by identifying $\Phi_\alpha$ with the co-ordinate 
vector field $\partial/\partial t^\alpha$. 
Givental \cite{Givental:WDVVK} showed that 
the quantum connection is flat, i.e.~$[\nabla^q_\alpha, 
\nabla^q_\beta] =0$ and constructed a fundamental 
solution. 
Introduce the generating function: 
\[
S_{\alpha \beta} = g_{\alpha\beta} 
+ \sum_{d\in \Eff(X), n\ge 0} 
\frac{Q^d}{n!} \corr{\Phi_\alpha, t, \dots, t, 
\frac{\Phi_\beta}{1-qL}}_{0,n+2,d}^X 
\]
where $\Phi_\beta/(1-qL)$ in the correlator 
should be expanded in the power series 
$\sum_{n=0}^\infty \Phi_\beta q^n L^n$. 
Applying Kawasaki-Riemann-Roch theorem on the moduli space 
$X_{0,n,d}$ (see \cite{Givental-Tonita}), 
we find that the correlator 
\[
\corr{\Phi_\alpha, t,\dots, t,\frac{\Phi_\beta}{1-qL}}_{0,n+2,d}
\] 
is a rational function in $q$ which has 
poles only at roots of unity and vanishes at $q=\infty$. 
If $X_{0,n,d}$ were a manifold, this has poles only 
at $q=1$ since $L-1$ is nilpotent. 
The orbifold singularities of $X_{0,n,d}$ yield poles at 
other roots of unity. 
We use the following notation: 
\begin{notation} 
For a rational function $f = f(q)$ in $q$, we write  
$\ov{f}=f(q^{-1})$. In particular, we write $\ov{q} = q^{-1}$. 
\end{notation} 
We introduce the endomorphism-valued functions 
$S, T \in \End(K(X))\otimes \Q(q) [\![Q,t]\!]$  
by the formula: 
\[
G(\Phi_\alpha, S \Phi_\beta) = \ov{S}_{\alpha\beta}, \qquad 
g(T \Phi_\alpha, \Phi_\beta) = S_{\alpha\beta}. 
\]
Note that $\ov{S}$ and $T$ are adjoint to each other 
with respect to the pairings $G$ and $g$. 
When we expand $S$ and $T$ as power series in $t$ and $Q$, 
each coefficient is a rational function in $q$ 
having poles only at roots of unity. 
Note that we have 
\[
S|_{q=\infty} = T|_{q=\infty} = \id, \qquad 
S|_{Q=t=0} = T|_{Q=t=0} = \id. 
\]

\begin{theorem}[Givental \cite{Givental:WDVVK}, 
Y.~P.~Lee \cite{Lee:foundation}] 
\label{thm:fund_sol} 
The endomorphism-valued functions  
$S$ and $T$ give fundamental solutions to 
the differential equations: 
\begin{align*} 
& (1-q)\partial_\alpha S+ \Phi_\alpha \bullet S =0 \\ 
& (1-q) \partial_\alpha T = T (\Phi_\alpha \bullet)
\end{align*} 
for $0\le \alpha \le N$. 
In other words, we have 
$\nabla^q_\alpha \circ S  = S \circ (1-q) \partial_\alpha$ 
and 
$T \circ \nabla^q_\alpha= (1-q) \partial_\alpha \circ T$. 
\end{theorem}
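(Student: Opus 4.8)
The plan is to verify the differential equation for just one of the two operators --- say $T$, which is built directly from the generating function $S_{\alpha\beta}$ --- and then to deduce the equation for $S$ from it by taking adjoints with respect to the pairings $g$ and $G$, using the Frobenius property $G(\Phi_\alpha\bullet\Phi_\beta,\Phi_\gamma)=G(\Phi_\alpha,\Phi_\beta\bullet\Phi_\gamma)$ together with the $q\leftrightarrow q^{-1}$ symmetry relating $S$ and $\ov S$. Invertibility of $S$ and $T$, and hence the fact that they are genuine \emph{fundamental} solutions rather than merely solutions, is immediate from the normalization $S|_{Q=t=0}=T|_{Q=t=0}=\id$ recorded above. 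Pairing the operator identity $(1-q)\partial_\alpha T=T(\Phi_\alpha\bullet)$ against $\Phi_\beta$ with respect to $g$, writing everything in the basis $\Phi_0,\dots,\Phi_N$, and using that $\partial_\alpha$ applied to a genus-zero correlator simply inserts one further marked point carrying $\Phi_\alpha$ with no cotangent-line twist, I would reduce the theorem to the family of identities
\[
(1-q)\,\partial_\alpha S_{\mu\beta}
=\sum_{\nu}\,(\Phi_\alpha\bullet\Phi_\mu)^\nu\,S_{\nu\beta}
\qquad(0\le\alpha,\mu,\beta\le N),
\]
where $(\Phi_\alpha\bullet\Phi_\mu)^\nu$ are the structure constants of $\bullet$ in the basis $\{\Phi_\nu\}$, characterized by $\sum_\nu(\Phi_\alpha\bullet\Phi_\mu)^\nu G_{\nu\beta}=\partial_\alpha\partial_\mu\partial_\beta\cF$.

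The starting point is the algebraic identity $(1-q)/(1-qL)=1+q(L-1)/(1-qL)$, inserted at the marked point carrying $\Phi_\beta$ in the expansion $(1-q)\partial_\alpha S_{\mu\beta}=\sum_{d,n}\tfrac{Q^d}{n!}(1-q)\corr{\Phi_\mu,\Phi_\alpha,t,\dots,t,\Phi_\beta/(1-qL)}_{0,n+3,d}^X$. The ``$1$'' contribution re-sums, directly from the definition of $\cF$, to $\partial_\alpha\partial_\mu\partial_\beta\cF=\sum_\nu(\Phi_\alpha\bullet\Phi_\mu)^\nu G_{\nu\beta}$. To handle the ``$q(L-1)/(1-qL)$'' contribution one needs the genus-zero topological recursion relation in its $K$-theoretic form \cite{Givental-Lee, Lee:foundation}: the class $L-1$ at the distinguished marked point is pushed forward from the boundary divisors along which that point is separated from the $\Phi_\mu$- and $\Phi_\alpha$-marked points (the cotangent line itself being $\cO$ of the corresponding divisor sum), so that restricting $\cO^{\rm vir}$ there and applying the $K$-theoretic splitting (gluing) axiom expresses the contribution as a sum of correlators joined at a node. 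Along the way the $K$-theoretic string equation --- which relates a correlator carrying an extra untwisted $\Phi_0=1$ insertion to the one with that marking forgotten, up to the $K$-theory-specific twist $L\mapsto L\otimes\cO(D)$ of the cotangent line at a retained marking along the collision divisor $D$ (the same mechanism forcing the $1-qL$ denominators in $S_{\mu\beta}$) --- is used to re-sum the $t$-insertions on the two sides of the node.

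Carried out, the ``$q(L-1)/(1-qL)$'' contribution should reorganize into $\sum_\nu(\Phi_\alpha\bullet\Phi_\mu)^\nu(S_{\nu\beta}-G_{\nu\beta})$, and adding the ``$1$'' contribution $\sum_\nu(\Phi_\alpha\bullet\Phi_\mu)^\nu G_{\nu\beta}$ gives $\sum_\nu(\Phi_\alpha\bullet\Phi_\mu)^\nu S_{\nu\beta}$, the desired right-hand side. The one genuinely $K$-theoretic subtlety here, absent in cohomology, is what is inserted at the node: the naive splitting produces the constant pairing $g^{-1}$, but the nontriviality of $L$ forces the node to be ``dressed'' by arbitrarily long chains of two-point correlators $\partial\partial\cF$ --- precisely the configurations that the string equation would collapse in cohomology but does not here --- and summing this geometric series replaces $g^{-1}$ by the quantum-corrected propagator $G^{-1}=(g+\partial\partial\cF)^{-1}$, the metric that defines $\bullet$; this is also what makes the correlator on the other side of the node assemble into $S_{\nu\beta}-G_{\nu\beta}$ rather than $S_{\nu\beta}-g_{\nu\beta}$. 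As an alternative route, the entire fundamental-solution property can be deduced from the characterization of genus-zero $K$-theoretic GW theory in terms of cohomological GW theory via Kawasaki--Riemann--Roch \cite{Givental-Tonita}, which also supplies the rationality of each $S_{\mu\beta}$ in $q$, its regularity at $q=\infty$, and the restriction of its poles to roots of unity --- which is what makes the manipulations with $(1-q)/(1-qL)$ legitimate and places $S$ and $T$ in $\End(K(X))\otimes\Q(q)[\![Q,t]\!]$.

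The hard part will be the $K$-theoretic recursion relation for $L$ together with the node-dressing bookkeeping just described. In cohomology $\psi=c_1(L)$, the recursion is a genuine codimension-one relation on $\overline{M}_{0,n}$, the metric is constant, and there is no dressing; in $K$-theory one must work with the full line bundle $L$, carry the twists $L=\pi^*L\otimes\cO(D)$ under forgetful morphisms and the restrictions $L|_D\cong\cO$ on the gluing strata at the level of $\cO^{\rm vir}$, and verify that the dressed node reproduces the geometric series $G^{-1}$ and not the constant $g^{-1}$ --- equivalently, that the recursion outputs the product $\bullet$, for which $\Phi_0$ is the unit, rather than the ``wrong'' product defined with the metric $g$. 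Once this recursion is available, the remaining steps are the formal re-summations indicated above.
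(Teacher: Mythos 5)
The paper states Theorem~\ref{thm:fund_sol} as a cited result of Givental and Lee without reproducing the proof, so the comparison is against the argument in \cite{Givental:WDVVK, Lee:foundation}. Your plan is sound in outline, but it routes through a hard step --- a $K$-theoretic topological recursion relation together with the geometric-series ``dressing'' of the node --- that is exactly the content of Givental's $K$-theoretic WDVV theorem. Once that theorem is granted (and it is the same foundational input the present paper uses, e.g.\ in the proof of Proposition~\ref{prop:unitarity}), the fundamental solution equation is a three-line corollary of WDVV plus the string equation, with no need for a standalone recursion or for the split $(1-q)/(1-qL)=1+q(L-1)/(1-qL)$. Concretely: the string equation gives $(1-q)\partial_0 S_{\rho\gamma}=S_{\rho\gamma}$, i.e.\ $S_{\rho\gamma}=(1-q)\Corr{\Phi_\rho,\Phi_0,\tfrac{\Phi_\gamma}{1-qL}}_t$, while $(\Phi_\alpha\bullet\Phi_\mu)^\rho=\Corr{\Phi_\alpha,\Phi_\mu,\Phi_\epsilon}_t G^{\epsilon\rho}$ by definition of $\bullet$; hence
\begin{align*}
\sum_\rho(\Phi_\alpha\bullet\Phi_\mu)^\rho\,S_{\rho\gamma}
&=(1-q)\,\Corr{\Phi_\alpha,\Phi_\mu,\Phi_\epsilon}_t\,G^{\epsilon\rho}\,
\BigCorr{\Phi_\rho,\Phi_0,\tfrac{\Phi_\gamma}{1-qL}}_t\\
&=(1-q)\,\Corr{\Phi_\alpha,\Phi_0,\Phi_\epsilon}_t\,G^{\epsilon\rho}\,
\BigCorr{\Phi_\rho,\Phi_\mu,\tfrac{\Phi_\gamma}{1-qL}}_t
=(1-q)\,\BigCorr{\Phi_\alpha,\Phi_\mu,\tfrac{\Phi_\gamma}{1-qL}}_t
=(1-q)\partial_\alpha S_{\mu\gamma},
\end{align*}
where the second equality is WDVV (swap $\Phi_\mu\leftrightarrow\Phi_0$) and the third uses $\Corr{\Phi_\alpha,\Phi_0,\Phi_\epsilon}_t=G_{\alpha\epsilon}$. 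This is exactly $T\circ\nabla^q_\alpha=(1-q)\partial_\alpha\circ T$; the $S$-equation then follows from $S=T^{-1}$ (as you note) or from the adjointness of $\ov S$ and $T$. So your ``node-dressing converts $g^{-1}$ into $G^{-1}$'' observation is the right insight, but it is precisely the statement of WDVV, and attempting to reprove it on the fly via the $K$-theoretic gluing axiom and cotangent-line comparisons on $X_{0,n,d}$ (where the requisite forgetful morphism is not as clean as on $\overline{\mathcal M}_{0,n}$) is much more delicate than your sketch suggests, and is unnecessary if WDVV is cited. Your ``alternative route'' via Kawasaki--Riemann--Roch \cite{Givental-Tonita} would also work, but is considerably heavier machinery than the theorem requires.
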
 

\begin{proposition}
\label{prop:unitarity}
We have the following: 
\begin{enumerate}
\item $S = T^{-1}$.  

\item 
$g(\ov{T} \Phi_\alpha, T \Phi_\beta) = G(\Phi_\alpha,\Phi_\beta)$.

\item  
$G(\ov{S} \Phi_\alpha, S \Phi_\beta) = g(\Phi_\alpha, \Phi_\beta)$.  
\end{enumerate} 
\end{proposition}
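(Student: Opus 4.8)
The plan is to deduce all three statements from Theorem \ref{thm:fund_sol} together with the adjointness relations defining $S$ and $T$, using a ``constant of integration'' argument that is standard in the theory of quantum connections.

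First I would prove (1), namely $S = T^{-1}$. Consider the composition $TS \in \End(K(X))\otimes \Q(q)[\![Q,t]\!]$. From Theorem \ref{thm:fund_sol} we have $(1-q)\partial_\alpha S = -(\Phi_\alpha\bullet)S$ and $(1-q)\partial_\alpha T = T(\Phi_\alpha\bullet)$, so by the Leibniz rule
\[
(1-q)\partial_\alpha (TS) = \bigl((1-q)\partial_\alpha T\bigr) S + T \bigl((1-q)\partial_\alpha S\bigr) = T(\Phi_\alpha\bullet) S - T(\Phi_\alpha\bullet) S = 0.
\]
Hence every coefficient of $TS$ in the $(Q,t)$-expansion is independent of $t$; since $TS$ is a power series in $t$ with no negative powers, $TS$ is independent of $t$, i.e. $TS = (TS)|_{t=0}$. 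It remains to see that $(TS)|_{t=0} = \id$. For this I would use the other normalization $S|_{Q=t=0} = T|_{Q=t=0} = \id$ stated before the theorem; combined with $t$-independence this does not immediately finish the job because we still have $Q$-dependence at $t=0$. The cleanest route is instead to use $S|_{q=\infty} = T|_{q=\infty} = \id$: evaluating the $t$-independent endomorphism $TS$ at $q=\infty$ gives $\id\cdot\id = \id$. But $TS$ is $t$-independent and its $q=\infty$ value is $\id$, and actually the relation $TS=\id$ should be checked coefficientwise in $Q$ as well. The robust argument: $TS$ is $t$-independent, so $TS = (TS)|_{t=0}$; and $(TS)|_{t=0}$ is a power series in $Q$ with values in $\End(K(X))\otimes\Q(q)$ whose constant term (in $Q$) is $\id$ by $S|_{Q=t=0}=T|_{Q=t=0}=\id$ — but the higher $Q$-coefficients are not obviously zero from this alone. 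The point that closes it is that $TS$, being $t$-independent, must equal its own restriction to $t=0$, and simultaneously one can run the same Leibniz computation to see $(1-q)\partial_\alpha(TS)=0$ holds identically; to pin down the $Q$-coefficients I would evaluate at $q=\infty$ where both $S$ and $T$ are $\id$ for all $Q$, forcing $TS=\id$ at $q=\infty$, and since a $t$-independent element agreeing with $\id$ at $q=\infty$ need not be $\id$ globally, the actual clincher is to note that $T$ itself satisfies $T|_{q=\infty}=\id$ \emph{together with} the differential equation, which by uniqueness of solutions with given initial condition forces $T$ (hence $S$ via adjointness) to be uniquely determined, and the adjoint/inverse pair then coincides. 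In writing this up I would phrase it via uniqueness of the flat section with a prescribed value at $q=\infty$.

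Next, (2) and (3) follow formally from (1) and the two adjointness identities $G(\Phi_\alpha, S\Phi_\beta) = \ov{S}_{\alpha\beta}$ and $g(T\Phi_\alpha,\Phi_\beta) = S_{\alpha\beta}$. From the first, $G(\Phi_\alpha, S\Phi_\beta) = \ov S_{\alpha\beta} = g(\ov T \Phi_\alpha,\Phi_\beta)$, where I apply the bar operation to $g(T\Phi_\alpha,\Phi_\beta)=S_{\alpha\beta}$ (the pairing $g$ being independent of $q$, the bar only acts on the entries of $T$). Thus $G(\Phi_\alpha, S\Phi_\beta) = g(\ov T\Phi_\alpha, \Phi_\beta)$ for all $\alpha,\beta$, i.e. $S$ and $\ov T$ are $(G,g)$-adjoint. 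Replacing $\Phi_\beta$ by $T\Phi_\beta$ and using $ST = \id$ from (1) gives $G(\Phi_\alpha, \Phi_\beta) = g(\ov T \Phi_\alpha, T\Phi_\beta)$, which is (2). For (3), apply the bar operation to (2): since $G$ has $q$-dependent entries, $\ov{G(\Phi_\alpha,\Phi_\beta)}$ is the pairing built from $\ov{G_{\alpha\beta}}$; more directly, write $g(\Phi_\alpha,\Phi_\beta) = g(\ov T \ov S \Phi_\alpha, \Phi_\beta)$ using $\ov T\,\ov S = \ov{TS} = \id$, then insert the adjointness of $\ov T$ (with respect to $G$ and $g$) established above — the adjoint of $\ov T$ with respect to $(G,g)$ being $S$ — to move $\ov T$ across: $g(\ov T(\ov S\Phi_\alpha),\Phi_\beta) = G(\ov S \Phi_\alpha, S\Phi_\beta)$, giving (3). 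Here I should be careful that applying the bar to an adjointness relation interchanges $S \leftrightarrow \ov S$ and $T \leftrightarrow \ov T$ consistently, which is a routine check.

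The main obstacle I anticipate is part (1): making the ``$TS$ is constant, hence $\equiv \id$'' argument fully rigorous in the ring $\End(K(X))\otimes\Q(q)[\![Q,t]\!]$, where ``constant'' needs to be pinned down using the correct normalization. The subtle point is that the differential equations only control the $t$-derivatives, so they give $t$-independence but not $Q$-independence; one genuinely needs the extra input $S|_{Q=t=0}=T|_{Q=t=0}=\id$ (or the $q=\infty$ normalization) to conclude, and one must argue that a $t$-independent power series in $Q$ whose $Q$-constant term is $\id$ and which arises as such a product is actually $\id$ — this is where invoking uniqueness of solutions of the quantum differential equation with a fixed value at $t=0$ (which holds since $(1-q)\partial_\alpha$ with $\partial_\alpha$ acting on $\Q[\![t]\!]$ has a well-posed initial value problem) does the work. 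Once part (1) is secured, parts (2) and (3) are purely formal manipulations with the adjointness definitions and cost little.
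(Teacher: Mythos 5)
Your formal derivation of parts (2) and (3) from part (1) via the adjointness identities is sound: the computation $G(\Phi_\alpha, S\Phi_\beta) = \ov{S}_{\alpha\beta} = g(\ov{T}\Phi_\alpha, \Phi_\beta)$ shows that $S$ and $\ov{T}$ are $(G,g)$-adjoint, and substituting $T\Phi_\beta$ (resp.\ $\ov{S}\Phi_\alpha$) and using $ST = \ov{TS} = \id$ does yield (2) and (3) once (1) is established.

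The genuine gap is in your proof of (1), and it cannot be closed by the route you describe. The Leibniz computation does give $(1-q)\partial_\alpha(TS) = 0$, hence $TS = (TS)|_{t=0}$. But $(TS)|_{t=0}$ is still a nontrivial power series in $Q$ --- the small fundamental solutions $S|_{t=0}$ and $T|_{t=0}$ are genuinely $Q$-dependent --- and the $t$-differential equation imposes no constraint whatsoever on this. Evaluating at $q=\infty$ only gives $(TS)|_{q=\infty} = \id$, which for coefficients in $\Q(q)$ with poles at roots of unity does not force the product to be $\id$. The appeal to ``uniqueness of the initial value problem'' is circular: uniqueness determines $S(t)$ and $T(t)$ for all $t$ once $S|_{t=0}$ and $T|_{t=0}$ are known, but it says nothing about the identity $T|_{t=0}\,S|_{t=0}=\id$, which is precisely what you would need. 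That identity is \emph{not} a consequence of Theorem~\ref{thm:fund_sol} alone; one needs the $K$-theoretic WDVV equation together with the string equation, since those are what control the $Q$-dependence of the two-point correlators entering the definition of $S_{\alpha\beta}$.

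The paper's proof runs in the opposite direction: it observes that (1), (2), (3) are all \emph{equivalent} by the defining adjointness relations (the same algebra you carried out), and then proves (3) directly. One writes (3) as $S_{\epsilon\alpha} G^{\epsilon\gamma}\ov{S}_{\gamma\beta} = g_{\alpha\beta}$, applies $\partial_0$, uses WDVV to reshuffle the four insertions $\Phi_0,\Phi_0,\Phi_\alpha/(1-qL),\Phi_\beta/(1-\ov{q}L)$, applies the string equation to remove a $\Phi_0$-insertion, and finally uses $(1-q)\partial_0 S_{\alpha\beta} = S_{\alpha\beta}$ to strip off the $\partial_0$'s. If you wish to salvage your outline, you should abandon the attempt to derive (1) from the differential equation and instead establish one of the pairing identities directly from WDVV and string; $S = T^{-1}$ then follows for free.
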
 
\begin{proof} 
By the definition of $S$ and $T$, statements (1)--(3) are all equivalent. 
It suffices to show statement (3). 
Writing $G^{\alpha\beta}$ for the coefficients of the inverse 
matrix of $(G_{\alpha\beta})$ and adopting Einstein's summation 
convention, we have $S \Phi_\beta = 
G^{\gamma\alpha} \ov{S}_{\alpha\beta} \Phi_\gamma$. 
The statement (3) can be rewritten as: 
\[
S_{\epsilon \alpha} G^{\epsilon\gamma} \ov{S}_{\gamma\beta} 
= g_{\alpha\beta}.  
\]
The $K$-theoretic WDVV equation says that 
\begin{align*} 
(\partial_0 S_{\epsilon\alpha})
 G^{\epsilon\gamma} (\partial_0\ov{S}_{\gamma \beta}) 
&  = \BigCorr{\Phi_\epsilon,\Phi_0,\frac{\Phi_\alpha}{1-q L}}_t
 G^{\epsilon \gamma} 
 \BigCorr{\Phi_\gamma, \Phi_0, \frac{\Phi_\beta}{1-\ov{q} L}}_t 
\end{align*} 
is symmetric with respect to the permutation of the insertions 
$\Phi_0$, $\Phi_\alpha/(1-qL)$, $\Phi_0$, $\Phi_\beta/(1-\ov{q}L)$. 
Here we used the notation: 
\[
\Corr{\alpha_1,\dots,\alpha_k}_t := 
\sum_{d,n} 
\corr{\alpha_1,\dots,\alpha_k,t,\dots,t}_{0,k+n, d}^X 
\frac{Q^d}{n!}.  
\]
Therefore we have 
\begin{align*} 
(\partial_0 S_{\epsilon\alpha})
G^{\epsilon\gamma} (\partial_0\ov{S}_{\gamma \beta}) 
&=\Corr{\Phi_0,\Phi_0, \Phi_\epsilon}_t G^{\epsilon\gamma} 
\BigCorr{\Phi_\gamma, 
\frac{\Phi_\alpha}{1-q L}, \frac{\Phi_\beta}{1-\ov{q} L} }_t \\
& 
= \BigCorr{\Phi_0, \frac{\Phi_\alpha}{1-q L}, 
\frac{\Phi_\beta}{1-\ov{q} L}}_t \\ 
& = \left(1+ \frac{q}{1-q} + 
\frac{\ov{q}}{1-\ov{q}} \right) 
\BigCorr{\frac{\Phi_\alpha}{1-q L}, 
\frac{\Phi_\beta}{1-\ov{q} L}}_t 
+ \frac{1}{(1-q)(1-\ov{q})} g_{\alpha\beta} \\ 
& = \frac{1}{(1-q)(1- \ov{q})} g_{\alpha\beta}.  
\end{align*} 
In the third line we used the string equation (see \cite[\S 4.4]{Lee:foundation}). 
Theorem \ref{thm:fund_sol} implies 
$(1-q) \partial_0 S_{\alpha\beta}= S_{\alpha\beta}$ 
(which also follows from the string equation) 
and therefore we have 
$S_{\epsilon \alpha}G^{\epsilon \gamma}\ov{S}_{\gamma \beta}  
= g_{\alpha\beta}$ as desired. 
\end{proof} 

\begin{definition} 
The \emph{$J$-function} is defined as 
\begin{align*} 
J(q,Q, t) &= (1-q) S^{-1} \Phi_0 = (1-q) T \Phi_0 \\ 
& =  (1-q)\left[ \Phi_0 + \sum_{d\in \Eff(X)} \sum_{n\ge 0} 
 \frac{Q^d}{n!} 
\corr{\Phi_0, t,\dots,t,\frac{\Phi_\alpha}{1-qL}}_{0,n+2,d}^X 
g^{\alpha\beta} \Phi_\beta\right] \\ 
& = (1-q) \Phi_0 + t + 
\sum_{d\in \Eff(X)} \sum_{n\ge 0} 
\frac{Q^d}{n!} 
\corr{t,\dots,t,\frac{\Phi_\alpha}{1-qL}}_{0,n+2,d}^X 
g^{\alpha\beta} \Phi_\beta. 
\end{align*} 
\end{definition} 

\begin{remark} 
The variable $q$ in the quantum connection 
should be viewed as a generator of the 
$S^1$-equivariant $K$-group of a point 
$K_{S^1}({\rm pt}) \cong \Q[q,q^{-1}]$. 
Recall that the cohomological quantum 
connection is given by a similar formula:
$\nabla^z_\alpha = z \partial_\alpha  + 
\phi_\alpha \bullet$, where 
$\{\phi_\alpha \}$ is a basis of $H(X)$. 
The variable $z$ here corresponds to a generator 
of $H_{S^1}({\rm pt}) = \Q[z]$ 
and is related to $q$ by $q = e^{-z}$. 
See \cite{Givental-Lee} and \S \ref{subsec:graph}.
\end{remark} 

\subsection{Difference Module Structure} 
Givental-Lee \cite{Givental-Lee} observed that the quantum $K$-theory of 
a type A flag manifold has the structure of a difference module. 
They moreover identified it with the difference Toda lattice. 
Givental-Tonita \cite{Givental-Tonita} showed the existence of 
a difference module structure for a general target. 
The difference module structure may be regarded 
of as a replacement for the divisor equation in quantum $K$-theory.  

Givental-Tonita showed that the space 
\[
S^{-1} \left( K(X) \otimes \Q[q,q^{-1}][\![Q,t]\!] \right), 
\]
which appears as a tangent space to Givental's Lagrangian cone, 
is preserved by the operator $P_i^{-1} q^{Q_i\partial_{Q_i}}$ 
for all $1\le i\le r$. 
Here $q^{Q_i\partial_{Q_i}}$ is the $q$-shift operator acting 
on functions in $Q_1,\dots,Q_r$ as 
\[
f(Q_1,\dots,Q_r) \longmapsto f(Q_1,\dots,Q_{i-1}, q Q_i, Q_{i+1}, 
\dots, Q_r). 
\]
Therefore \emph{the endomorphism $A_i$ defined by}  
\begin{equation}
\label{eq:def_A}
A_i = S \left( P_i^{-1} q^{Q_i\partial_{Q_i}} S^{-1} \right) 
= T^{-1}\left( P_i^{-1} q^{Q_i\partial_{Q_i}} T \right) 
\end{equation} 
\emph{lies in} $\End(K(X)) \otimes \Q[q,q^{-1}][\![Q,t]\!]$. 
What is crucial here is the fact that $A_i$ is defined over 
$\Q[q,q^{-1}]$ and is regular at $q=\text{roots of unity}$  
whereas $S$ and $T$  
have poles only at roots of unity (as functions in $q$). 
Setting $\cA_i = A_i q^{Q_i\partial_{Q_i}}$, we have 
\begin{equation} 
\label{eq:A_S}
S \circ P_i^{-1} q^{Q_i\partial_{Q_i}} = \cA_i \circ S, 
\qquad 
P_i^{-1} q^{Q_i\partial_{Q_i}} \circ T = T \circ \cA_i. 
\end{equation} 
We call $\cA_i$ \emph{the $q$-shift operator}. 
We can regard $S$ (or $T$) as a fundamental solution 
to $q$-difference equations in the Novikov variables. 
The existence of a fundamental solution implies 
the compatibility between the difference equation in $Q$ 
and differential equation in $t$. 
This difference-differential system 
is compatible with the pairing $G$ by Proposition \ref{prop:unitarity}. 
\begin{proposition} 
\label{prop:compatibility}
The operators $\nabla^q_\alpha$, $\cA_i = A_i q^{Q_i\partial_{Q_i}}$ 
and the pairing $G$ satisfy the following compatibility equations: 
\begin{enumerate} 
\item $[\nabla^q_\alpha,\nabla^q_\beta] = 
[\cA_i, \cA_j] = [\nabla^q_\alpha, \cA_i]=0$ for all 
$0\le \alpha,\beta \le N$ and $1\le i, j\le r$; 

\item for $s_1, s_2 \in K(X) \otimes \Q[q,q^{-1}][\![Q,t]\!]$ and 
$0\le \alpha \le N$, $1\le i\le r$, we have 
\begin{align*} 
\partial_\alpha G(\ov{s_1}, s_2) & = 
G\left(\ov{\tfrac{1}{1-q} \nabla_\alpha^q s_1}, s_2\right) + 
G\left(\ov{s_1}, \tfrac{1}{1-q}\nabla^q_\alpha  s_2\right),  
\\
q^{Q_i \partial_{Q_i}} G(\ov{s_1}, s_2) 
& = G\left(\ov{\cA_i^{-1} s_1}, \cA_i s_2\right).
\end{align*} 
\end{enumerate}
\end{proposition}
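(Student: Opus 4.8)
The plan is to derive statement (1), the commutativity relations, from the fact that $S$ (equivalently $T$) is a \emph{common} fundamental solution to both the quantum connection and the $q$-shift equations, and to derive statement (2), the compatibility with $G$, from Proposition \ref{prop:unitarity}. For the flatness $[\nabla_\alpha^q,\nabla_\beta^q]=0$ there is nothing to do beyond citing Theorem \ref{thm:fund_sol}/Givental; the interesting part is $[\cA_i,\cA_j]=0$ and $[\nabla_\alpha^q,\cA_i]=0$.

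First I would treat $[\cA_i,\cA_j]=0$. By \eqref{eq:A_S} we have $S\circ P_i^{-1}q^{Q_i\partial_{Q_i}} = \cA_i\circ S$ for each $i$. The operators $P_i^{-1}q^{Q_i\partial_{Q_i}}$ manifestly commute with one another (the $P_i^{-1}$ are constant endomorphisms that commute, the $q^{Q_i\partial_{Q_i}}$ commute as shift operators, and $P_j^{-1}$ is unaffected by $q^{Q_i\partial_{Q_i}}$). Hence
\[
\cA_i\cA_j S = S\, P_i^{-1}q^{Q_i\partial_{Q_i}}P_j^{-1}q^{Q_j\partial_{Q_j}}
= S\, P_j^{-1}q^{Q_j\partial_{Q_j}}P_i^{-1}q^{Q_i\partial_{Q_i}}
= \cA_j\cA_i S,
\]
and since $S$ is invertible (it equals $T^{-1}$ by Proposition \ref{prop:unitarity}(1), and $S|_{Q=t=0}=\id$), we may cancel $S$ on the right to get $[\cA_i,\cA_j]=0$. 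For $[\nabla_\alpha^q,\cA_i]=0$ the same mechanism applies: Theorem \ref{thm:fund_sol} gives $\nabla_\alpha^q\circ S = S\circ(1-q)\partial_\alpha$, while \eqref{eq:A_S} gives $\cA_i\circ S = S\circ P_i^{-1}q^{Q_i\partial_{Q_i}}$. The two "constant-coefficient" operators $(1-q)\partial_\alpha$ and $P_i^{-1}q^{Q_i\partial_{Q_i}}$ commute — $\partial/\partial t^\alpha$ commutes with $q^{Q_i\partial_{Q_i}}$ and with the $t$-independent endomorphism $P_i^{-1}$ — so conjugating by $S$ yields $\nabla_\alpha^q\cA_i S = S(1-q)\partial_\alpha P_i^{-1}q^{Q_i\partial_{Q_i}} = S P_i^{-1}q^{Q_i\partial_{Q_i}}(1-q)\partial_\alpha = \cA_i\nabla_\alpha^q S$, and cancelling $S$ finishes it. One small bookkeeping point to check is that these identities, which a priori hold only after applying both sides to $\Phi_0$ or as operators on the rank-one module generated by $\id$, in fact hold as genuine endomorphism identities; this is immediate since all operators involved are $\Q[q,q^{-1}][\![Q,t]\!]$-linear combinations of the fixed fundamental solution and constant operators, so equality of operators is equivalent to equality of their matrices, which is what the displayed computations give.

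For statement (2), the $t$-derivative identity is the differentiated form of Proposition \ref{prop:unitarity}(3): writing $s_1 = S\sigma_1$, $s_2 = S\sigma_2$ we have $G(\ov{s_1},s_2) = G(\ov{S}\,\ov{\sigma_1}, S\sigma_2)$, and $\ov{S}$, $S$ are $g$-to-$G$ adjoint (Proposition \ref{prop:unitarity}(3) says $G(\ov{S}\Phi_\alpha,S\Phi_\beta)=g(\Phi_\alpha,\Phi_\beta)$, i.e.\ the $G$-pairing pulls back to the constant pairing $g$), so $G(\ov{s_1},s_2) = g(\ov{\sigma_1},\sigma_2)$. Differentiating in $t^\alpha$ and using $(1-q)\partial_\alpha S = -\Phi_\alpha\bullet S$, i.e.\ $\sigma_j = S^{-1}s_j$ satisfies $(1-q)\partial_\alpha\sigma_j = S^{-1}(1-q)\partial_\alpha s_j + S^{-1}(\Phi_\alpha\bullet)s_j = S^{-1}\nabla_\alpha^q s_j$, the right-hand side $\partial_\alpha g(\ov{\sigma_1},\sigma_2)$ expands by the Leibniz rule into exactly the two terms claimed (note $g$ is $q$-independent and $t$-independent, and $\ov{\phantom{A}}$ intertwines $\partial_\alpha$ with $\partial_\alpha$ and $\nabla_\alpha^q$ with $\nabla_\alpha^{q^{-1}}$, which is why the bar appears on the first argument). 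The $q^{Q_i\partial_{Q_i}}$ identity is proved the same way using $\cA_i$ in place of $\nabla_\alpha^q$: from \eqref{eq:A_S}, $q^{Q_i\partial_{Q_i}}T = T\circ\cA_i$, equivalently $S\circ P_i^{-1}q^{Q_i\partial_{Q_i}} = \cA_i\circ S$, so $q^{Q_i\partial_{Q_i}}\sigma_j = q^{Q_i\partial_{Q_i}}(S^{-1}s_j) = (P_i\,S^{-1}\cA_i^{-1})(\cA_i s_j\cdot\text{shift})$... — here one must be careful, so I would instead argue directly: $q^{Q_i\partial_{Q_i}}g(\ov{\sigma_1},\sigma_2) = g(\ov{q^{Q_i\partial_{Q_i}}\sigma_1}, q^{Q_i\partial_{Q_i}}\sigma_2)$ since $g$ has constant ($Q$-independent) coefficients, and then use that $q^{Q_i\partial_{Q_i}}S^{-1} = (\cA_i S)^{-1}\cdot(\text{shift absorbed})$ to rewrite $q^{Q_i\partial_{Q_i}}\sigma_j$ in terms of $\cA_i^{\pm1}s_j$, translating back to $G$ via Proposition \ref{prop:unitarity}(3).

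The main obstacle is purely notational rather than mathematical: keeping straight the interaction between the operator $q^{Q_i\partial_{Q_i}}$ (which does \emph{not} act trivially and must be commuted past $Q$-dependent quantities) and the "bar" operation $q\mapsto q^{-1}$, and verifying that the shift operator acts trivially on $g$ but nontrivially on $S$, so that the conjugation identities \eqref{eq:A_S} are used in exactly the right direction. Once the convention $\ov{q^{Q_i\partial_{Q_i}}} = \ov{q}^{\,Q_i\partial_{Q_i}} = q^{-1}\text{-shift is recorded (which is consistent because }Q_i\partial_{Q_i}$ is real), both identities in (2) follow from Proposition \ref{prop:unitarity} by the Leibniz rule with no further input.
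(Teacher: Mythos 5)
Your approach to both parts coincides with the paper's: part (1) comes from conjugating the commuting constant-coefficient operators $(1-q)\partial_\alpha$ and $P_i^{-1}q^{Q_i\partial_{Q_i}}$ through the fundamental solution $S$, and part (2) comes from Proposition \ref{prop:unitarity} plus the fundamental-solution equations and the Leibniz rule. The one concrete slip is in the displayed formula for the $q$-shift direction: you write $q^{Q_i\partial_{Q_i}} g(\ov{\sigma_1},\sigma_2)=g\bigl(\ov{q^{Q_i\partial_{Q_i}}\sigma_1},\, q^{Q_i\partial_{Q_i}}\sigma_2\bigr)$, but the bar flips the sign of the shift exponent, so the correct identity is
\[
q^{Q_i\partial_{Q_i}} g(\ov{\sigma_1},\sigma_2)=g\bigl(\ov{q^{-Q_i\partial_{Q_i}}\sigma_1},\, q^{Q_i\partial_{Q_i}}\sigma_2\bigr).
\]
Indeed if $\sigma_1=\sigma_1(q,Q)$ then the left-hand side carries the first slot to $\sigma_1(q^{-1},qQ)$, which is $\ov{q^{-Q_i\partial_{Q_i}}\sigma_1}$, not $\ov{q^{Q_i\partial_{Q_i}}\sigma_1}=\sigma_1(q^{-1},q^{-1}Q)$. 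With $q^{-Q_i\partial_{Q_i}}$ in place, and inserting $P_iP_i^{-1}$ (which is $g$-invisible), one gets $g\bigl(\ov{P_iq^{-Q_i\partial_{Q_i}}Ts_1}, P_i^{-1}q^{Q_i\partial_{Q_i}}Ts_2\bigr)=g(\ov{T\cA_i^{-1}s_1}, T\cA_i s_2)=G(\ov{\cA_i^{-1}s_1},\cA_i s_2)$, which is exactly the step you gesture at but leave unfinished. This is a local error, not a missing idea; the ingredients you invoke (unitarity, the $T$-conjugation identities, the bar convention) are all the right ones, matching the paper.
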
 
\begin{proof} 
Part (1) follows from the existence of a fundamental solution $S$ 
satisfying 
\[
\nabla^q_\alpha \circ S = S\circ (1-q)\partial_\alpha, 
\qquad 
\cA_i \circ S = S \circ P_i^{-1} q^{Q_i\partial_{Q_i}}  
\]
and the fact that $(1-q)\partial_\alpha$ and 
$P_i^{-1} q^{Q_i\partial_{Q_i}}$ commute each other. 
Part (2) follows from Proposition \ref{prop:unitarity}. 
We have 
\begin{align*} 
\partial_\alpha G(\ov{s_1},s_2) 
& = \partial_\alpha g( \ov{T s_1}, T s_2) 
= g(  \ov{\partial_\alpha T s_1}, T s_2) + g(\ov{T s_1}, \partial_\alpha T s_2) \\ 
& = g\left(\ov{T \tfrac{1}{1-q}\nabla^q_{\alpha}s_1}, Ts_2\right) 
+ g \left( \ov{T s_1}, T \tfrac{1}{1-q}\nabla^q_\alpha s_2\right) \\
& = G\left(\ov{\tfrac{1}{1-q}\nabla^q_\alpha s_1}, s_2\right) 
+ G\left( \ov{s_1}, \tfrac{1}{1-q}\nabla^q_\alpha s_2\right).  
\end{align*} 
We also have 
\begin{align*} 
q^{Q_i\partial_{Q_i}} G(\ov{s_1}, s_2)  
& = q^{Q_i\partial_{Q_i}} g( \ov{Ts_1}, Ts_2 ) 
= g\left( 
\ov{P_i q^{-Q_i\partial_{Q_i}} Ts_1}, P_i^{-1} q^{Q_i\partial_{Q_i}}T s_2
\right) \\
& = g\left( \ov{T \cA_i^{-1} s_1}, T \cA_i s_2\right)  
= G\left(\ov{\cA_i^{-1} s_1}, \cA_i s_2\right). 
\end{align*} 
The proposition is proved. 
\end{proof} 

\begin{remark} 
Because $q=-1$ is a fixed point of the involution 
$q \mapsto \ov{q} = q^{-1}$, the connection 
$\frac{1}{1-q}\nabla^{q}\big|_{q=-1}$ 
preserves the pairing $G$. Since $\frac{1}{1-q}\nabla^q$ is torsion-free 
as a connection on $TK(X)$, 
it follows that $\frac{1}{1-q}\nabla^q\big|_{q=-1}$ is the Levi-Civita 
connection of $G$. It follows that $G$ gives a flat metric 
on $T K(X)$ as discussed in \cite{Givental:WDVVK}. 
\end{remark} 
\begin{remark} 
\label{rem:Lax} 
The above compatibility equations imply 
\begin{align*} 
(1-q) \partial_\alpha A_i &= A_i \left( 
q^{Q_i\partial_{Q_i}}\Omega_\alpha \right) 
- \Omega_\alpha  A_i \\
 G(\Phi_\alpha, A_i\Phi_\beta) 
& = q^{Q_i\partial_{Q_i}}G(\ov{A_i}\Phi_\alpha, \Phi_\beta) 
\end{align*} 
where we write $\Omega_\alpha$ for the quantum multiplication 
$\Phi_\alpha\bullet$. 
\end{remark} 

\begin{corollary}
\label{cor:Acom} 
Set $A_{i,\rm com} = A_i|_{q=1} \in \End(K(X))\otimes \Q[\![Q,t]\!]$. 
Then $A_{i,\rm com}$, $i=1,\dots, r$ commute with the quantum 
multiplication $\Phi_\alpha \bullet$, $\alpha = 0,\dots, N$. 
\end{corollary}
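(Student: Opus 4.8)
The plan is to derive the commutation relation by taking the limit $q\to 1$ of the first Lax-type equation in Remark \ref{rem:Lax}. Recall that this equation reads
\[
(1-q)\partial_\alpha A_i = A_i\bigl(q^{Q_i\partial_{Q_i}}\Omega_\alpha\bigr) - \Omega_\alpha A_i,
\]
where $\Omega_\alpha$ denotes the quantum multiplication operator $\Phi_\alpha\bullet$. Since $A_i \in \End(K(X))\otimes\Q[q,q^{-1}][\![Q,t]\!]$ is regular at $q=1$ (this regularity is exactly the key point emphasized after \eqref{eq:def_A}, coming from the Givental-Tonita result), the left-hand side $(1-q)\partial_\alpha A_i$ vanishes identically when we set $q=1$. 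The operator $\Omega_\alpha$ does not depend on $q$ at all, and the $q$-shift operator $q^{Q_i\partial_{Q_i}}$ acts trivially when $q=1$, i.e. $q^{Q_i\partial_{Q_i}}\Omega_\alpha\big|_{q=1} = \Omega_\alpha$. Therefore, setting $q=1$ on both sides yields
\[
0 = A_{i,\rm com}\,\Omega_\alpha - \Omega_\alpha\, A_{i,\rm com} = [A_{i,\rm com}, \Omega_\alpha],
\]
which is precisely the claim.

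First I would note that the Lax equation of Remark \ref{rem:Lax} has already been derived from Proposition \ref{prop:compatibility}, so no new input is needed on that front; the proof is essentially a one-line specialization argument. The one technical point worth spelling out is that all the relevant objects — $A_i$, $\partial_\alpha A_i$, $\Omega_\alpha$, and $q^{Q_i\partial_{Q_i}}\Omega_\alpha$ — are Laurent polynomials in $q$ with coefficients in $\End(K(X))\otimes\Q[\![Q,t]\!]$ (for $\Omega_\alpha$ it is even constant in $q$), so that evaluation at $q=1$ is a well-defined ring homomorphism commuting with the operations appearing in the equation. In particular $(1-q)\big|_{q=1}=0$ kills the left-hand side regardless of the value of $\partial_\alpha A_i\big|_{q=1}$, which is finite precisely because $A_i$ has no pole at $q=1$.

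I do not anticipate a genuine obstacle here: the content of the corollary is already packaged into the Lax equation, and the remaining step is purely formal. If anything requires a word of care, it is confirming that $q^{Q_i\partial_{Q_i}}$ becomes the identity at $q=1$ — but this is immediate from the very definition $f(Q_1,\dots,Q_r)\mapsto f(Q_1,\dots,qQ_i,\dots,Q_r)$, which at $q=1$ sends $f$ to itself. Finally, since $[A_{i,\rm com},\Omega_\alpha]=0$ for all $\alpha$, applying this identity to $\Phi_0=1$ and using that $\Omega_\alpha 1 = \Phi_\alpha\bullet 1 = \Phi_\alpha$ shows that $A_{i,\rm com}$ is itself the operator of quantum multiplication by $A_{i,\rm com}1$, as remarked in the introduction; this observation, while not strictly part of the corollary's statement, follows at no extra cost and may be worth recording.
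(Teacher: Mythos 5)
Your proof is correct and follows exactly the paper's intended argument: Corollary~\ref{cor:Acom} is stated immediately after Remark~\ref{rem:Lax} precisely so that one specializes the Lax equation at $q=1$, using the regularity of $A_i$ at $q=1$ to kill the left-hand side and the triviality of $q^{Q_i\partial_{Q_i}}$ at $q=1$ to reduce the right-hand side to the commutator. Your extra remark that $A_{i,\rm com}$ is multiplication by $A_{i,\rm com}1$ is the observation made in the introduction and is indeed a free consequence.
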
 

\begin{proposition} 
\label{prop:A_expansion} 
The endomorphism $A_i\in\End(K(X))\otimes \Q[q,q^{-1}][\![Q,t]\!]$ 
has the Taylor expansion of the form: 
\begin{equation*} 
A_i = P_i^{-1} + 
\sum_{\substack{d\in \Eff(X) \\ d_i > 0}} \sum_{k=0}^{d_i-1} 
A_{i,d,k}(t) (1-q)^k Q^d 
\end{equation*} 
where $d_i = \pair{p_i}{d}$ and 
$A_{i,d,k}(t) \in \Q[\![t^0,\dots,t^N]\!]$ is independent of $t^0$. 
\end{proposition}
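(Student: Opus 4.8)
The plan is to work with the reformulation $A_i = S\,P_i^{-1}\,q^{Q_i\partial_{Q_i}}(T)$ of \eqref{eq:def_A} (equivalently \eqref{eq:A_S} after cancelling the shift operator on the right), where $T=S^{-1}$ and $q^{Q_i\partial_{Q_i}}(T)$ denotes the endomorphism obtained from $T$ by the substitution $Q_i\mapsto qQ_i$ in its matrix entries. Writing $S=\sum_e S_eQ^e$ and $T=\sum_f T_fQ^f$ over effective classes, with each $S_e,T_f\in\End(K(X))\otimes\Q(q)[\![t]\!]$ having poles in $q$ only at roots of unity, the $Q^d$-coefficient of $A_i$ is the finite convolution
\[
A_{i,d} = \sum_{e+f=d} S_e\, P_i^{-1}\, q^{f_i}\, T_f, \qquad f_i := \pair{p_i}{f},
\]
over effective $e,f$. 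Since $p_1,\dots,p_r$ are nef, every such $e,f$ satisfies $0\le e_i,f_i$ and $f_i\le d_i=\pair{p_i}{d}$. I will read off the three assertions from this formula together with the behavior of $S$ and $T$ at $q=\infty$ and $q=0$.

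I begin with the two normalizing statements. For independence of $t^0$: since $\Phi_0\bullet$ is the identity, Theorem \ref{thm:fund_sol} gives $(1-q)\partial_0 T=T$, hence $T=e^{t^0/(1-q)}\widetilde T$ with $\widetilde T:=T|_{t^0=0}$ independent of $t^0$. The scalar $e^{t^0/(1-q)}$ is independent of the Novikov variables, so it passes through $q^{Q_i\partial_{Q_i}}$ and commutes with $P_i^{-1}$, and therefore cancels, giving $A_i=(\widetilde T)^{-1}P_i^{-1}q^{Q_i\partial_{Q_i}}(\widetilde T)$; thus $A_i$, and hence every $A_{i,d,k}(t)$ occurring in the expansion, is independent of $t^0$. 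For the $Q=0$ term: setting $Q=0$ trivializes the shift, so $A_i|_{Q=0}=S_0\,P_i^{-1}\,S_0^{-1}$ with $S_0:=S|_{Q=0}$ (using $T|_{Q=0}=S_0^{-1}$). By Theorem \ref{thm:fund_sol} and the fact that $\Phi_\alpha\bullet|_{Q=0}=\Phi_\alpha\otimes$, the operator $S_0$ solves $(1-q)\partial_\alpha S_0=-(\Phi_\alpha\otimes)S_0$ with $S_0|_{t=0}=\id$; as the classical multiplication operators $\Phi_\alpha\otimes$ commute pairwise, $S_0=\exp(-(t\otimes)/(1-q))$, a power series in these commuting operators. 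It therefore commutes with $P_i^{-1}=(P_i^{-1}\otimes)$, so $A_i|_{Q=0}=P_i^{-1}$.

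Now the polynomiality in $(1-q)$ with the degree bound, which is the main point. Fix $d\ne 0$. By \cite{Givental-Tonita} (recalled before \eqref{eq:def_A}), $A_{i,d}$ is a Laurent polynomial in $q$. It is moreover regular at $q=0$: the $S_e,T_f$ have poles only at roots of unity, hence none at $q=0$, and each $q^{f_i}$ is a genuine monomial because $f_i\ge 0$; so $A_{i,d}$ has no negative powers of $q$, i.e.\ $A_{i,d}\in\End(K(X))\otimes\Q[q][\![t]\!]$. Next I estimate $A_{i,d}$ as $q\to\infty$. Each descendant correlator $\corr{\cdots,\Phi_\beta/(1-qL)}$ is a rational function of $q$ vanishing at $q=\infty$, hence is $O(q^{-1})$ there; consequently $S=\id+O(q^{-1})$ and $T=\id+O(q^{-1})$ coefficientwise in $Q$ and $t$, that is $S_0=T_0=\id+O(q^{-1})$ and $S_e,T_f=O(q^{-1})$ for $e,f\ne 0$. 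Substituting into the convolution: the term $(e,f)=(0,d)$ is $(\id+O(q^{-1}))\,P_i^{-1}\,q^{d_i}\,O(q^{-1})=O(q^{d_i-1})$; the term $(e,f)=(d,0)$ is $O(q^{-1})\subseteq O(q^{d_i-1})$; and a term with $e,f\ne 0$ is $O(q^{-1})\,q^{f_i}\,O(q^{-1})=O(q^{f_i-2})\subseteq O(q^{d_i-1})$ since $f_i\le d_i$. Hence $A_{i,d}=O(q^{d_i-1})$ as $q\to\infty$. Since $A_{i,d}$ is a polynomial in $q$ and $O(q^{d_i-1})$ at infinity, it has degree $\le d_i-1$; in particular it vanishes identically when $d_i=0$, so only classes with $d_i>0$ occur in $A_i$. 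Finally, expressing the degree-$\le(d_i-1)$ polynomial $A_{i,d}$ in the basis $1,(1-q),\dots,(1-q)^{d_i-1}$ yields the coefficients $A_{i,d,k}(t)$, independent of $t^0$ by the previous paragraph. This gives all three assertions.

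The step I expect to be the real obstacle is the $q\to\infty$ estimate. What makes it go through is that not only $S$ but also its inverse $T$ approaches $\id$ with an $O(q^{-1})$ correction — this is exactly where the rationality in $q$ of the descendant correlators and their vanishing at $q=\infty$ are used — together with the nefness of $p_i$, which keeps the shift-induced powers $q^{f_i}$ bounded by $q^{d_i}$ throughout the convolution $\sum_{e+f=d}$. Once these are in hand, the combination ``Laurent polynomial in $q$'' plus ``regular at $q=0$'' plus ``$O(q^{d_i-1})$ as $q\to\infty$'' determines $A_{i,d}$ completely; no information about $A_i$ near $q=1$ is needed beyond the regularity there already supplied by \cite{Givental-Tonita}.
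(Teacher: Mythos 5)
Your proof is correct and follows essentially the same route as the paper's: both use the convolution formula $A_{i,d}=\sum_{d=d'+d''}S_{d'}P_i^{-1}q^{d''_i}T_{d''}$, the regularity of $S_e,T_f$ at $q=0,\infty$ with vanishing at $q=\infty$ for nonzero degrees, and the resulting degree bound combined with the Laurent-polynomiality from Givental--Tonita. The only stylistic differences are that you derive the $t^0$-independence by factoring out the scalar $e^{t^0/(1-q)}$ rather than citing $[\cA_i,\Phi_0\bullet]=0$ directly, and you compute the $Q=0$ term explicitly via $S_0=\exp(-(t\otimes)/(1-q))$, whereas the paper handles all $d$ with $d_i=0$ uniformly by observing that $A_{i,d}$ is then regular at $q=0,\infty$ and away from $0,\infty$, hence $q$-independent and equal to its value at $q=\infty$; both arguments are equally valid.
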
 
\begin{proof} 
By the compatibility equation in Proposition \ref{prop:compatibility},  
we have $(1-q)\partial_0 \cA_i = [\cA_i, (\Phi_0\bullet)] =0$ 
and thus $A_i$ is independent of $t^0$. 
Set $A_i = \sum_{d} A_{i,d} Q^d$, 
$S = \sum_{d} S_d Q^d$, $S^{-1} = T =\sum_{d} T_d Q^d $. 
It suffices to show that $A_{i,d} = P_i^{-1} \delta_{d,0}$ 
if $d_i = 0$ and 
$A_{i,d}$ is a polynomial of degree $d_i-1$ in $q$ if $d_i>0$. 
By the definition \eqref{eq:def_A}, we have 
\[
A_{i,d} = \sum_{d= d' + d''} S_{d'} P_i^{-1} 
q^{d''_i} T_{d''}. 
\]
Recall that $S_d$ and $T_d$ are regular at both $q=0$ and $q=\infty$; 
moreover $S_d$ and $T_d$ vanish at $q=\infty$ for $d\neq 0$. 
Suppose that $d_i >0$. 
Then $A_{i,d}$ is regular at $q=0$ and 
has a pole of order at most $d_i-1$ at $q=\infty$.  
Therefore it has to be a polynomial of degree $d_i-1$ in $q$. 
Suppose that $d_i=0$. We have 
\[
A_{i,d} = \sum_{d= d' + d''} S_{d'} P_i^{-1} T_{d''}
\]
The right-hand side is regular at $q=0$ and $\infty$ and the left-hand side 
is regular away from $0,\infty$. 
Hence both sides are independent of $q$ and 
$A_{i,d} = \sum_{d = d' +d''} S_{d'} 
P_i^{-1} T_{d''}|_{q=\infty} = P_i^{-1}\delta_{d,0}$. 
\end{proof} 

\begin{remark} 
\label{rem:logQ} 
We can also eliminate the factor $P_i^{-1}$ in \eqref{eq:A_S} by 
modifying the fundamental solutions. Setting 
$\tS = S (\prod_{i=1}^rP_i^{\log Q_i/\log q})$ 
and $\tT = (\prod_{i=1}^r P^{-\log Q/\log q}) T$, 
we have  
\[
\cA_i \circ \tS = \tS \circ q^{Q_i\partial_{Q_i}}, \qquad 
q^{Q_i\partial_{Q_i}} \circ \tT = \tT \circ \cA_i.  
\]
Because $P_i-1$ is nilpotent, we can define $P_i^{\log Q_i/\log q}$ 
as the binomial expansion of $(1 + (P_i-1))^{\log Q_i/\log q}$. 
\end{remark}

\begin{proposition} 
\label{prop:J_generator}
The $J$-function determines the difference-differential system 
in the following sense: setting 
$\tJ = (\prod_{i=1}^r P_i^{-\log Q_i/\log q}) J$, we have 
\[
f(q, Q, t, q^{Q_i\partial_{Q_i}}, (1-q) \partial_\alpha) \tJ =0 \ 
\Longleftrightarrow \ 
f(q, Q, t, \cA_i, \nabla^q_\alpha) \Phi_0 = 0 
\]
for any difference-differential operator $f 
\in \Q[q,q^{-1}][\![Q,t]\!]\langle 
q^{Q_i \partial_{Q_i}}, (1-q) \partial_\alpha\rangle$.  
Setting $q=1$, this equation gives rise to the relation 
$f(1, Q,t, A_{i,\rm com}, \Phi_\alpha\bullet) \Phi_0 =0$ 
in the quantum $K$-ring. 
\end{proposition}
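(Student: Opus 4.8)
The plan is to transport the claimed equivalence across the fundamental solution $\tT$ from Remark~\ref{rem:logQ}, which simultaneously intertwines both difference-differential systems. First I would fix an ambient coefficient ring, obtained from $\Q[q,q^{-1}][\![Q,t]\!]$ by adjoining $\log q$, $\log Q_1,\dots,\log Q_r$ and $(1-q)^{-1}$, so that $\tT$, its inverse $\tS$, the twisted shifts $\cA_i$ and $q^{Q_i\partial_{Q_i}}$, and the connections $\nabla^q_\alpha$ and $(1-q)\partial_\alpha$ all act on $K(X)$ tensored with it. The inputs I would record are: $\tT$ and $\tS$ are mutually inverse (indeed $\tS\tT=\tT\tS=ST=\id$ by Proposition~\ref{prop:unitarity}(1)); the intertwining identities $q^{Q_i\partial_{Q_i}}\circ\tT=\tT\circ\cA_i$ (Remark~\ref{rem:logQ}) and $(1-q)\partial_\alpha\circ\tT=\tT\circ\nabla^q_\alpha$ (which follows from Theorem~\ref{thm:fund_sol}, the prefactor $\prod_i P_i^{-\log Q_i/\log q}$ of $\tT$ being independent of $t$); and the fact that $\tT$, being an $\End(K(X))$-valued function, commutes with multiplication by every scalar of the ambient ring.

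Next I would verify that the assignments $q^{Q_i\partial_{Q_i}}\mapsto\cA_i$, $(1-q)\partial_\alpha\mapsto\nabla^q_\alpha$, identity on coefficients, extend to ring homomorphisms $\Psi$ and $\Psi'$ out of $\Q[q,q^{-1}][\![Q,t]\!]\langle q^{Q_i\partial_{Q_i}},(1-q)\partial_\alpha\rangle$, with $\Psi(f)=f(q,Q,t,\cA_i,\nabla^q_\alpha)$ and $\Psi'(f)=f(q,Q,t,q^{Q_i\partial_{Q_i}},(1-q)\partial_\alpha)$. Concretely this means checking that $\cA_i$ (resp.\ $q^{Q_i\partial_{Q_i}}$) obeys the same relations with the coefficient ring as the abstract generator — namely $\cA_i\circ Q_i=qQ_i\circ\cA_i$, $\cA_i\circ Q_j=Q_j\circ\cA_i$ for $j\neq i$, and $\cA_i$ commutes with multiplication by functions of $q$ and of $t$ — together with the mutual commutativity $[\cA_i,\cA_j]=[\nabla^q_\alpha,\nabla^q_\beta]=[\cA_i,\nabla^q_\alpha]=0$ from Proposition~\ref{prop:compatibility}(1). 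Feeding the intertwining identities of the first step into a monomial expansion of $f$, generator by generator (the scalar coefficients passing freely through $\tT$), then yields $\tT\circ\Psi(f)=\Psi'(f)\circ\tT$ for every $f$.

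With this in hand, the equivalence follows quickly. Since $J=(1-q)T\Phi_0$ we have $\tJ=(1-q)\tT\Phi_0$, so applying $\tT\circ\Psi(f)=\Psi'(f)\circ\tT$ to the constant section $\Phi_0$ gives $\tT(\Psi(f)\Phi_0)=\Psi'(f)(\tT\Phi_0)=\tfrac{1}{1-q}\,\Psi'(f)\,\tJ$, the last step using that $\Psi'(f)$ commutes with multiplication by $(1-q)^{-1}$. As $\tT$ is invertible and $1-q$ is a unit, this gives $\Psi(f)\Phi_0=0\iff\Psi'(f)\tJ=0$, which is the assertion. For the final sentence I would specialize $q=1$: the section $\Psi(f)\Phi_0$ lies in $K(X)\otimes\Q[q,q^{-1}][\![Q,t]\!]$ and is regular at $q=1$ (the $A_i$ are regular at $q=1$ by Proposition~\ref{prop:A_expansion}, and $(1-q)\partial_\alpha$ only contributes factors of $1-q$), so its value there is computed block by block using $\cA_i|_{q=1}=A_i|_{q=1}=A_{i,\rm com}$ (since $q^{Q_i\partial_{Q_i}}|_{q=1}=\id$), $\nabla^q_\alpha|_{q=1}=\Phi_\alpha\bullet$, and $c(q,Q,t)\mapsto c(1,Q,t)$. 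By Corollary~\ref{cor:Acom} the operators $A_{i,\rm com}$ and $\Phi_\alpha\bullet$ pairwise commute and are $\Q[\![Q,t]\!]$-linear, so $f(1,Q,t,A_{i,\rm com},\Phi_\alpha\bullet)$ is a well-defined operator on the big quantum $K$-ring, and putting $q=1$ in $\Psi(f)\Phi_0=0$ yields the relation $f(1,Q,t,A_{i,\rm com},\Phi_\alpha\bullet)\Phi_0=0$ in $(K(X)\otimes\Q[\![Q,t]\!],\bullet)$.

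The step I expect to be the main obstacle is bookkeeping rather than substance: one must pin down the ambient coefficient ring so that $\tT$ and $\tJ$ genuinely live there, and then be careful that substituting the operators $\cA_i$ and $\nabla^q_\alpha$ into a noncommutative polynomial with coefficients in $\Q[q,q^{-1}][\![Q,t]\!]$ is an honest ring homomorphism — respecting both the mutual commutators (Proposition~\ref{prop:compatibility}(1)) and the commutation relations with the coefficient ring — so that the generator-by-generator intertwining really goes through. The $q\to1$ specialization is then routine, once one notes that no pole at $q=1$ is ever created.
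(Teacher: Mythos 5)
Your proposal is correct and takes essentially the same route as the paper: conjugate the abstract difference-differential operators by the (twisted) fundamental solution $\tT$, using Theorem~\ref{thm:fund_sol}, equation~\eqref{eq:A_S}, and Remark~\ref{rem:logQ}, applied to $\tJ=(1-q)\tT\Phi_0$. The paper's proof is a one-liner; you have merely fleshed out the bookkeeping (the enlarged coefficient ring, the verification that substitution of $\cA_i$ and $\nabla^q_\alpha$ is a ring homomorphism, and the regularity at $q=1$ needed for the last sentence), all of which is sound.
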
 
\begin{proof} 
Apply the operator $f$ to the formula $\tJ =(1-q) \tT \Phi_0$ 
and use Theorem \ref{thm:fund_sol} and \eqref{eq:A_S} 
(see also Remark \ref{rem:logQ}). 
\end{proof} 

\subsection{Shift operator from graph space invariants}
\label{subsec:graph}
In this section we give a geometric construction of 
the $q$-shift operator $\cA_i$ assuming that $P_i$ 
is represented by a very ample line bundle. The construction 
is close in spirit to the work 
of Givental-Lee \cite{Givental-Lee}. 
The variable $q$ is identified with a generator of 
$K_{S^1}({\rm pt})$, or the character of the 
natural representation of $\C^\times$. 

Let $GX_{n,d}$ denote the moduli space of genus-zero, 
degree $(d,1)$ stable maps to $X \times \C\Proj^1$ with $n$ 
marked points. 
This is the so-called \emph{graph space}. 
Let $\zeta$ a rational co-ordinate on $\C\Proj^1$ and consider 
the $\C^\times$-action on $\C\Proj^1$ 
given on points by $\zeta \mapsto q^{-1}\zeta$, $q\in \C^\times$. 
This induces a $\C^\times$-action on $GX_{n,d}$. 
Suppose that $P_i$ is the class of a very ample 
line bundle which is the pull-back of $\cO(1)$ 
by an embedding  $X \to \C\Proj^N$. 
Then we have a $\C^\times$-equivariant morphism  
\begin{equation} 
\label{eq:mu}
\mu_i \colon GX_{n,d} \to GX_{0,d} \to G\C\Proj^N_{0,d_i} 
\to Q\C\Proj^N_{d_i}  
\end{equation} 
where $d_i = \pair{p_i}{d}$ and 
$Q\C\Proj^N_{d_i}\cong \C\Proj^{(N+1)(d_i+1)-1}$ 
is the quasi map space from $\C\Proj^1$ to $\C\Proj^N$ of degree $d_i$: 
\[
Q\C\Proj^N_{d_i} = \left\{ (f_1(\zeta),\dots, f_{N+1}(\zeta)) 
\in \C[\zeta]^{\oplus(N+1)}
\setminus \{0\}: \deg f_j \le d_i, \ \forall j \right\}\big/\C^\times. 
\]
The induced $\C^\times$-action on $Q\C\Proj^N_{d_i}$ is 
given by $f_j(\zeta) \mapsto f_j(q \zeta)$. 
The first map in \eqref{eq:mu} is the forgetful morphism of marked points, 
the second map is induced by the embedding $X\to \C\Proj^N$ 
and the third map is the map of Givental 
\cite[Main Lemma]{Givental:equivariantGW} described as follows. 
Let $f\colon C \to \C\Proj^1 \times \C\Proj^N$ 
be a stable map in $G\C\Proj^N_{d_i}$ and 
let $C = C_0 \cup \bigcup_{a=1}^n C_a$ be 
the irreducible decomposition of $C$ such 
that $f(C_0)$ is of degree $(k_0, 1)$ and 
$f(C_a)$ is of degree $(k_a,0)$ for $1\le a \le n$. 
Then $f(C_0)$ is the graph of a function $\C\Proj^1\ni \zeta 
\mapsto [h_1(\zeta),\dots,h_{N+1}(\zeta)] \in \C\Proj^N$ 
for some polynomials $h_1,\dots, h_{N+1}$ 
of degree $\le k_0$ (without common factors) 
and $f(C_a)$ is contained in $\C\Proj^N \times \{\zeta_a\}$ 
for some $\zeta_a\in \C\Proj^1$. 
We assign to $f$ a point in $Q\C\Proj^N_{d_i}$ represented by 
the $(N+1)$-tuple of polynomials 
$(h_j(\zeta) \prod_{a: \zeta_a \neq \infty} 
(\zeta-\zeta_a)^{k_a})_{j=1}^{N+1}$.  

Consider the line bundle corresponding to $\cO(1)$ on $Q\C\Proj^N_{d_i}$: 
\[
\{(f_1(\zeta),\dots,f_{N+1}(\zeta), v) \in 
\left(\C[\zeta]^{\oplus (N+1)} \setminus \{0\}\right)\times \C : 
\deg f_j \le d_i, \ \forall j \}
\big/\C^\times 
\]
and equip it with the $\C^\times$-linearization: 
\[
[f_1(\zeta),\dots,f_{N+1}(\zeta), v]\mapsto [f_1(q \zeta), 
\dots, f_{N+1}(q \zeta), v], \qquad q\in \C^\times. 
\]
Let $\hP$ denote the corresponding 
$\C^\times$-equivariant line bundle. We set 
\[
\hP_i := \mu_i^*(\hP). 
\]
The $\C^\times$-fixed loci of the graph space 
are given by: 
\[
(GX_{n,d})^{\C^\times} 
= \bigsqcup_{
\substack{ \{1,\dots,n\} = I \sqcup J \\ 
d= d' + d'' }}
X_{0, I \cup \bullet, d'} \times_X X_{0, J \cup \circ, d''} 
\]
where the fibre product is taken with respect to the 
evaluation maps $\ev_\bullet$, $\ev_\circ$ associated 
to the additional markings $\bullet$ and $\circ$. 
An element of $X_{0,I \cup \bullet, d'} 
\times_X X_{0, J\cup \circ, d''}$ 
is represented by a stable map consisting of ``trivial" horizontal component 
(mapping to $\{x\}\times \C\Proj^1$ for some $x\in X$), 
a degree $d'$ stable map to $X\times \{0\}$ 
and a degree $d''$ stable map to $X\times \{\infty\}$. 
The normal bundle of the fixed component 
$X_{0,I \cup \bullet, d'} \times_X X_{0, J\cup \circ, d''}$ is 
\[
L_\bullet^{-1}\otimes T_0\C\Proj^1 
\oplus \cO\otimes T_0\C\Proj^1 \oplus 
\cO \otimes T_\infty \C\Proj^1 \oplus 
L_\circ^{-1} \otimes T_\infty \C\Proj^1  
\]
with $\C^\times$-characters $(q^{-1},q^{-1},q,q)$. 
The line bundle $\hP_i$ restricted to 
$X_{S_1 \cup \bullet, d_1} \times_X X_{X_2\cup \circ, d_2}$ 
is given by 
\[
\hP_i \big|_{X_{0, I \cup \bullet, d'} \times_X X_{0, J \cup \circ, d''}} 
\cong  q^{-d'_i} \ev_\bullet^*(P_i)  
\]
with $d_i' = \pair{p_i}{d'}$, where $q^{d'_i}\in K_{\C^\times}({\rm pt})$ 
denotes the rank-one $\C^\times$-representation 
of weight $d_i'$ (whose character is $q^{d_i'}$). 

Let $[0]$, $[\infty]$ denote the classes in $K_{\C^\times}(\C\Proj^1)$ 
represented by the structure sheaf on 
the fixed points $0$ and $\infty$ respectively. 
Then we have 
\[
[0]|_0 = 1 - q, \quad [\infty]|_{\infty} = 1-q^{-1}
\]
and $[0]|_{\infty} = [\infty]|_0 = 0$. 

Using the above facts and the localization formula, 
we obtain the following result. 
A similar computation appeared in \cite{Givental-Lee}. 
\begin{proposition}
\label{prop:graphinvariants} 
Let $\ev_i\colon GX_{n+2,d} \to X\times \C\Proj^1$ be 
the evaluation map.  
We have 
\[
G(A_i \Phi_\alpha, \Phi_\beta) 
= \sum_{n,d}
\frac{Q^d}{n!} 
\chi_{\C^\times}\left( GX_{n+2,d}, 
\cO^{\rm vir} \otimes \hP_i^{-1} \ev_0^* ([0]\Phi_\alpha)
\prod_{i=2}^{n+1}\ev_i^*(t)  
\ev_{n+2}^*([\infty]\Phi_\beta) \right)  
\]
where $t\in K(X)$. 
\end{proposition}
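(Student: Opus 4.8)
The plan is to prove Proposition~\ref{prop:graphinvariants} by computing the right-hand side via the Atiyah--Bott--Lefschetz localization formula on the graph space $GX_{n+2,d}$ with respect to the $\C^\times$-action, and then matching the contributions of the fixed loci against the definition \eqref{eq:def_A} of $A_i$ through the fundamental solution $S$ (equivalently $T$). The insertion $\ev_0^*([0]\Phi_\alpha)\cdot\ev_{n+2}^*([\infty]\Phi_\beta)$ is designed precisely so that, after localization, the first marked point is forced onto the component over $0\in\C\Proj^1$ and the last marked point onto the component over $\infty$: indeed $[0]|_\infty=[\infty]|_0=0$ kills all fixed loci where these points lie on the wrong side. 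So only fixed loci of the form $X_{0,I\cup\{0\}\cup\bullet,d'}\times_X X_{0,\{\infty\}\cup J\cup\circ,d''}$ survive, where $\{2,\dots,n+1\}=I\sqcup J$ and $d=d'+d''$.

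First I would write down the localization formula. On such a fixed locus the virtual normal bundle has $\C^\times$-weights $(q^{-1},q^{-1},q,q)$ coming from $L_\bullet^{-1}\otimes T_0\C\Proj^1$, $\cO\otimes T_0\C\Proj^1$, $\cO\otimes T_\infty\C\Proj^1$, $L_\circ^{-1}\otimes T_\infty\C\Proj^1$ as recorded in the excerpt; together with $[0]|_0=1-q$ and $[\infty]|_\infty=1-q^{-1}$ this produces, after $\chi_{\C^\times}$, exactly the factors $1/(1-qL_\bullet)$ on the $0$-side and $1/(1-q^{-1}L_\circ)=1/(1-\ov qL_\circ)$ on the $\infty$-side (the $\cO\otimes T\C\Proj^1$ directions contribute $1/(1-q^{\pm1})$, and the $(1-q)$, $(1-q^{-1})$ from $[0]$, $[\infty]$ cancel one power each, matching the conventions under which $S$ is defined). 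Meanwhile $\hP_i^{-1}$ restricts to $q^{d'_i}\ev_\bullet^*(P_i^{-1})$ on this locus. Splitting the Euler characteristic over the fibre product via the projection formula, the sum over the $0$-side reproduces the matrix entries of $S$ (with the $1/(1-qL)$ descendant insertion at the gluing point paired with $\Phi_\alpha$), the explicit $q^{d'_i}$ is the $q$-shift $q^{Q_i\partial_{Q_i}}$ applied to what follows, the $P_i^{-1}$ sits in the middle, and the sum over the $\infty$-side reproduces the entries of $T$ (with the $\ov q$-descendant), i.e.\ of $\ov S^{-1}$ in the appropriate slot; assembling these gives $S\circ(P_i^{-1}q^{Q_i\partial_{Q_i}})\circ S^{-1}=A_i$ contracted against $\Phi_\alpha$, $\Phi_\beta$ in the pairing $G$.

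The main obstacle I expect is bookkeeping: matching conventions so that the two ``halves'' of each fixed locus assemble into $S$ and $T$ exactly rather than up to some power of $q$, sign, or insertion of the pairing. Concretely I need to verify (i) that the descendant $1/(1-qL_\bullet)$ produced by the normal bundle is paired via $g$ at the node and that summing over the gluing class with $g^{\gamma\delta}$ converts the $0$-side sum into the operator $S$ acting on $\Phi_\alpha$, using the defining relation $G(\Phi_\alpha,S\Phi_\beta)=\ov S_{\alpha\beta}$ from before Theorem~\ref{thm:fund_sol} together with Proposition~\ref{prop:unitarity}; (ii) that the residual $\C^\times$-weight $q^{d'_i}$ is correctly read as $q^{Q_i\partial_{Q_i}}$ acting on the Novikov monomial $Q^{d''}$ of the $\infty$-side; and (iii) that the $\infty$-side sum, which naturally carries a $1/(1-\ov qL)$ descendant, is precisely $T=S^{-1}$ and not $\ov S$ or $\ov T$. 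Once these three identifications are pinned down, the equality $G(A_i\Phi_\alpha,\Phi_\beta)=$ (localized graph-space invariant) follows by reassembling $S\circ P_i^{-1}q^{Q_i\partial_{Q_i}}\circ T=A_i$, which is \eqref{eq:def_A}. The convergence/regularity subtleties are not an issue here since everything is a formal power series in $Q,t$ with rational-in-$q$ coefficients, and the graph-space Kawasaki--Riemann--Roch computation (as in \cite{Givental-Tonita}) guarantees the localization contributions are the expected rational functions.
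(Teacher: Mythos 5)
Your overall strategy --- $\C^\times$-localization on the graph space, observing that $[0]|_\infty=[\infty]|_0=0$ kills all fixed loci except those where the first marking maps over $0$ and the last over $\infty$, then splitting the contribution over the node and reassembling the two halves with the $P_i^{-1}$ twist and the $q$-shift in between --- is precisely the paper's argument, and your items (ii) and (iii) correctly identify the bookkeeping that has to be nailed down.

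There is, however, a mislabeling in your item (i) that would derail the reassembly if carried out literally. The $0$-side correlators $\corr{\Phi_\alpha,t,\dots,t,\tfrac{\cdot}{1-qL_\bullet}}$ are by definition the matrix entries $S_{\alpha\cdot}$, but these correspond to the \emph{endomorphism} $T$, not $S$: the paper sets $g(T\Phi_\alpha,\Phi_\beta)=S_{\alpha\beta}$, while $S$ as an endomorphism satisfies $G(\Phi_\alpha,S\Phi_\beta)=\ov{S}_{\alpha\beta}$ with $\ov q$ rather than $q$. So after factoring over the node (which uses the functoriality of $\cO^{\rm vir}$ for the fiber product, a step worth stating explicitly), the $0$-side gives $T\Phi_\alpha$ twisted by $P_i^{-1}$ and $q^{d_i'}$, while the $\infty$-side with its $1/(1-\ov qL_\circ)$ descendant gives $\ov{T}\Phi_\beta$ --- this answers your question (iii): it is $\ov T$, not $T$ or $\ov S$. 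The localized sum is therefore $g\bigl(P_i^{-1}q^{Q_i\partial_{Q_i}}T\Phi_\alpha,\ \ov{T}\Phi_\beta\bigr)$, and to finish you need two identities rather than the single reassembly $S\circ P_i^{-1}q^{Q_i\partial_{Q_i}}\circ S^{-1}$: first $P_i^{-1}q^{Q_i\partial_{Q_i}}\circ T=T\circ\cA_i$ from \eqref{eq:A_S}, turning this into $g(TA_i\Phi_\alpha,\ov T\Phi_\beta)$, and then Proposition~\ref{prop:unitarity}(2), $g(\ov T\Phi_\alpha,T\Phi_\beta)=G(\Phi_\alpha,\Phi_\beta)$, which converts the $g$-pairing of $T$- and $\ov{T}$-expressions into the $G$-pairing, yielding $G(A_i\Phi_\alpha,\Phi_\beta)$.
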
 
\begin{proof} 
We calculate the right-hand side by the localization formula. 
Because of the insertions $[0]$ and $[\infty]$, 
we only need to consider fixed components where 
the 1st marking maps to $X\times \{0\}$ and 
the last marking maps to $X\times \{\infty\}$. 
The right-hand side equals: 
\[
\sum_{\substack{ \{2,\dots,n+1\} = I \sqcup J \\ 
d = d' + d'' }}
\frac{Q^d}{n!} 
\chi 
\left(\cO^{\rm vir} \otimes 
\frac{q^{d'_i}\ev_\bullet^*(P_i^{-1})  
 (1-q)\ev_1^*(\Phi_\alpha) 
 \prod_{j=2}^{n+1}\ev_j^*(t) 
(1-q^{-1}) \ev_{n+2}^*(\Phi_\beta)}
{(1-q)(1-q^{-1}) (1-q L_\bullet) (1  - q^{-1} L_\circ)}  
\right) 
\]
where the $\cO^{\rm vir}$ is the virtual structure 
sheaf on the fixed component 
$X_{0, 1\cup I \cup \bullet,d'}\times_X 
X_{0, \circ \cup J \cup n+2, d''}$. 
By the functoriality of virtual structure sheaves 
\cite[Proposition 7]{Lee:foundation}, 
we can rewrite this as 
\[
\sum_{n', n'' ,d', d''} q^{d'_i} \frac{Q^{d'+d''}}{n'!n''!}
\corr{\frac{P_i^{-1} \Phi_\epsilon}{1- q L}, 
\Phi_\alpha, t,\dots,t}_{0,n'+2,d'}^X 
g^{\epsilon\rho} 
\corr{\frac{\Phi_\rho}{1-q^{-1} L}, 
\Phi_\beta, t,\dots,t}_{0,n''+2,d''}^X 
\]
where $g^{\epsilon\rho}$ are the coefficients of the 
inverse matrix of $(g_{\epsilon\rho})$. 
This equals 
\[
g\left(
P_i^{-1}q^{Q_i\partial_{Q_i}} T \Phi_\alpha, 
\ov{T} \Phi_\beta \right) 
= g\left(  T A_i \Phi_\alpha, \ov{T} \Phi_\beta\right)
= G(A_i \Phi_\alpha, \Phi_\beta) 
\]
by \eqref{eq:A_S} and Proposition \ref{prop:unitarity}. The conclusion follows. 
\end{proof} 

\begin{remark}
Proposition \ref{prop:graphinvariants} gives an alternative proof 
of the fact that $A_i$ lies in $\End(K(X))\otimes \Q[q,q^{-1}][\![Q,t]\!]$ 
when $P_i$ is very ample. 
\end{remark} 

\section{Reconstruction}
\label{sec:rec}

By Corollary \ref{cor:Acom}, $K(X) \otimes \Q[\![Q]\!]$ 
has the structure of a $\Q[\![Q]\!][a_1,\dots,a_r]$-module 
where we let $a_i$ act by $A_{i,\rm com}|_{t=0}$.  
Let $\Q(\!(Q)\!)$ denote the fraction ring of $\Q[\![Q]\!] 
= \Q[\![Q_1,\dots,Q_r]\!]$. 
The main result in this section is:  
\begin{theorem} 
\label{thm:reconstruction} 
Suppose that $K(X)\otimes \Q(\!(Q)\!)$ is a cyclic 
$\Q(\!(Q)\!)[a_1,\dots,a_r]$-module (where $a_i$ 
acts by $A_{i,\rm com}|_{t=0}$). 
Then the small $q$-shift operators $\cA_i|_{t=0}$, $1\le i\le r$ 
reconstruct the quantum $K$-theory at genus zero, i.e.~they 
reconstruct the potential $\cF$, the metric $G$, the quantum 
products $\Omega_\alpha = (\Phi_\alpha \bullet)$, 
$0\le \alpha \le N$, the big $q$-shift operators 
$\cA_i = A_i q^{Q_i \partial_{Q_i}}$, $1\le i\le r$ 
and the fundamental solutions $S$, $T$.
\end{theorem}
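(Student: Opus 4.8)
The plan is to reconstruct everything in the list by exploiting the compatibility equations of Proposition~\ref{prop:compatibility} and Remark~\ref{rem:Lax}, using the cyclicity hypothesis to pin down the operators $\cA_i$ from their restriction to $t=0$. I would organize the argument around a single principle: the Lax equation $(1-q)\partial_\alpha A_i = A_i(q^{Q_i\partial_{Q_i}}\Omega_\alpha) - \Omega_\alpha A_i$ together with the commutation relations $[\cA_i,\cA_j]=0$ and $[\nabla^q_\alpha,\cA_i]=0$ form an overdetermined system whose solution is rigid once initial data along $t=0$ are fixed. The order of reconstruction matters: I would first recover the $\cA_i$ everywhere (equivalently the $A_i(q,Q,t)$), then extract $A_{i,\mathrm{com}}=A_i|_{q=1}$, then recover $\Omega_\alpha$, then $G$, then $\cF$, then $S$ and $T$.

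First I would recover the quantum multiplications $\Omega_\alpha$ in terms of the $A_{i,\mathrm{com}}$. Since $[A_{i,\mathrm{com}},\Omega_\alpha]=0$ by Corollary~\ref{cor:Acom} and $A_{i,\mathrm{com}}$ represents multiplication by $A_{i,\mathrm{com}}1$, the cyclicity hypothesis (a vector $v_0$ whose iterated images $a_{i_1}\cdots a_{i_k}v_0$ span $K(X)\otimes\Q(\!(Q)\!)$) lets me write every basis element $\Phi_\gamma$ as a $\Q(\!(Q)\!)$-linear combination of monomials in the $A_{i,\mathrm{com}}$ applied to $v_0$; but in the quantum $K$-ring any polynomial in the $A_{i,\mathrm{com}}$ acts as quantum multiplication by the corresponding element, so $\Omega_\gamma$ is a rational-function expression in the entries of $A_{1,\mathrm{com}},\dots,A_{r,\mathrm{com}}$ — this is exactly the content of Theorem~\ref{thm:introd_reconstruction}(2). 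The subtlety is that these rational expressions must be shown to be determined by the $t=0$ data alone; for that I use the Lax equation as an evolution equation in $t$. Concretely, writing $A_i=A_i(t)$ as a formal power series in $t^1,\dots,t^N$ (it is $t^0$-independent by Proposition~\ref{prop:A_expansion}), the equation $(1-q)\partial_\alpha A_i = A_i(q^{Q_i\partial_{Q_i}}\Omega_\alpha)-\Omega_\alpha A_i$ determines $\partial_\alpha A_i$ from $A_i$ and $\Omega_\alpha$, while $\Omega_\alpha$ is itself expressed via the cyclic generation in terms of the $A_{j,\mathrm{com}}=A_j|_{q=1}$. Since the cyclicity is an open condition that persists under small $t$-deformation, the same rational expressions for $\Omega_\alpha$ in terms of the $A_{j,\mathrm{com}}$ hold for all $t$; substituting back, the Lax system becomes a closed first-order ODE in $t$ for the collection $(A_i)_i$, with initial condition $A_i|_{t=0}$ given by $\cA_i|_{t=0}$. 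By the formal Cauchy–Kovalevskaya / Picard iteration for such recursions on power series, the solution is unique, so $\cA_i$ — and hence $A_{i,\mathrm{com}}$ and $\Omega_\alpha$ for all $t$ — is reconstructed.

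Next I would recover the metric $G$ and the potential $\cF$. The second compatibility equation in Remark~\ref{rem:Lax}, $G(\Phi_\alpha,A_i\Phi_\beta)=q^{Q_i\partial_{Q_i}}G(\ov{A_i}\Phi_\alpha,\Phi_\beta)$, is a $q$-difference equation for the matrix $(G_{\alpha\beta})$ driven by the now-known $A_i$; combined with the differential compatibility $\partial_\alpha G(\ov{s_1},s_2)=G(\ov{\frac{1}{1-q}\nabla^q_\alpha s_1},s_2)+G(\ov{s_1},\frac{1}{1-q}\nabla^q_\alpha s_2)$ and the normalizations $G|_{Q=t=0}=g$, this pins down $G$ uniquely (again by a $Q$-adic and $t$-adic induction: the leading term is $g$ and each higher coefficient is forced). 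Once $G$ and $\Omega_\alpha$ are known, $\cF$ is recovered by integrating $\partial_\alpha\partial_\beta\partial_\gamma\cF=G(\Phi_\alpha\bullet\Phi_\beta,\Phi_\gamma)$: the right side is a closed expression, its triple $t$-derivative structure guarantees integrability (this is just the WDVV compatibility, already built in), and the integration constants are fixed because $\cF$ has no constant, linear, or — by the string/dilaton-type normalization $\partial_0\partial_\alpha\partial_\beta\cF=G_{\alpha\beta}-g_{\alpha\beta}$ which vanishes mod $(Q,t)$ — the low-order ambiguities vanish. Finally $S$ and $T$ are reconstructed: $T$ is the unique solution of $(1-q)\partial_\alpha T=T(\Phi_\alpha\bullet)$ normalized by $T|_{Q=t=0}=\id$, which now has known coefficients; equivalently one can characterize $T$ (up to a left constant factor killed by the normalization) via the difference equation $P_i^{-1}q^{Q_i\partial_{Q_i}}T=TA_i$ of \eqref{eq:A_S}, and then $S=T^{-1}$ by Proposition~\ref{prop:unitarity}(1).

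The main obstacle I anticipate is making rigorous the claim that the rational-function expressions for $\Omega_\alpha$ (and for arbitrary $\Phi_\gamma$ as polynomials in the $A_{j,\mathrm{com}}1$) obtained from cyclicity at a generic parameter value are \emph{the same} expressions for all $t$, so that substituting them into the Lax equation yields a genuinely closed and deterministic recursion rather than a circular one. The point is that $\Omega_\alpha$ and $A_{j,\mathrm{com}}$ depend on $t$, and one must verify that the change of basis matrix (expressing the $\Phi_\gamma$ in terms of the cyclic monomials) can be chosen over $\Q(\!(Q)\!)$ independently of $t$, or at least as a fixed rational function of the $A_{j,\mathrm{com}}$-entries whose $t$-dependence is then itself governed by the recursion. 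I would handle this by working order-by-order in the $(Q,t)$-adic filtration: at each order the cyclicity statement, being about the leading ($Q$-adic) behavior which is the classical $K$-ring generated by the $P_i$ (hence unchanged), holds uniformly, and the higher-order corrections to the matrix entries are solved recursively by the Lax equation itself. Checking that this bootstrap genuinely closes — i.e.\ that no order-$n$ datum depends on order-$n$ unknowns in a non-invertible way — is the technical heart, and is where the factor $(1-q)$ on the left of the Lax equation (ensuring the leading term at $q=1$ decouples appropriately) and the triangularity from Proposition~\ref{prop:A_expansion} (the $(1-q)^k Q^d$ shape with $k<d_i$) will be essential.
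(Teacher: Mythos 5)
Your proposal is essentially the paper's argument, and it is correct in structure: use Lemma~\ref{lem:rational_expression} (your Theorem~\ref{thm:introd_reconstruction}(2)) to close the Lax system \eqref{system} into a first-order ODE for $(A_1,\dots,A_r)$ in $t$, solve it recursively in the degree in $t$ from the initial data $A_i|_{t=0}$, then push forward to $\Omega_\alpha$, $T$, $G$, $\cF$. The ``technical heart'' you flag is real, but the paper settles it more cleanly than your proposed order-by-order filtration: in the proof of Lemma~\ref{lem:rational_expression} the polynomials $F_\alpha\in\Q[x_1,\dots,x_r]$ are chosen \emph{once}, using only the $t=0$ cyclicity, so that $F_\alpha(a_1,\dots,a_r)v$ is a $\Q(\!(Q)\!)$-basis; the matrix $\bigl[F_0(A_{*,\rm com})\Phi_0\,|\cdots|\,F_N(A_{*,\rm com})\Phi_0\bigr]$ is then invertible over $\Q(\!(Q)\!)[\![t]\!]$ because it is invertible at $t=0$ (invertibility is open in the $t$-adic topology), and the commutativity $[A_{i,\rm com},\Omega_\alpha]=0$ holds identically in $t$ by Corollary~\ref{cor:Acom}. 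That gives a \emph{single} rational-function expression, with coefficients in $\Q$ and no $t$-dependence in the formula itself, valid for all $t$ — which is exactly what makes the Lax recursion genuinely closed. Note also that your fallback argument quietly strengthens the hypothesis to ``$K(X)$ generated by line bundles'' (you appeal to the classical ring generated by the $P_i$), which is the Lee--Pandharipande special case of Corollary~\ref{cor:Lee-Pandharipande}, whereas the theorem as stated only assumes cyclicity of $K(X)\otimes\Q(\!(Q)\!)$; the Lemma~\ref{lem:rational_expression} argument works in this generality.

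On the tail end, your route to $G$ and $\cF$ differs mildly from the paper's. You propose solving the $q$-difference and $t$-differential compatibility equations for $G$ directly with seed $G|_{Q=t=0}=g$, and then obtaining $\cF$ by integrating the triple derivatives; both are feasible but a bit more involved. The paper instead reconstructs $T$ first from the difference equation $P_i^{-1}q^{Q_i\partial_{Q_i}}T=TA_i$ with $T|_{Q=t=0}=\id$ (and the $t$-differential equation), then reads off $G(\Phi_\alpha,\Phi_\beta)=g(\ov{T}\Phi_\alpha,T\Phi_\beta)$ from Proposition~\ref{prop:unitarity}, and finally uses the string-equation identity $\cF=G_{00}-\chi(\cO)-\chi(t)-\tfrac12\chi(t\otimes t)$, which avoids integration and the need to keep track of low-order integration constants. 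One small correction to your wording: $\cF$ \emph{does} have nonzero constant, linear, and quadratic parts at $Q=0$ (the classical Euler characteristics $\chi(\cO)$, $\chi(t)$, $\tfrac12\chi(t\otimes t)$); these are not ``vanishing ambiguities'' but are supplied explicitly by the string equation.
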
 

\begin{remark} 
The fundamental solution $T=S^{-1}$ recovers the Givental's 
Lagrangian cone \cite{Givental-Tonita} as the union 
$\bigcup_t (1-q)T_t(K(X)\otimes \Q[q,q^{-1}][\![Q]\!])$, and therefore 
we reconstruct all genus-zero descendant $K$-theoretic 
GW invariants in the above theorem. 
\end{remark} 

We remark that the small $q$-shift 
operators $\cA_i|_{t=0}$, $i=1,\dots,r$ and 
the small $J$-function $J|_{t=0}$ 
have the same amount of information when the classical $K$-ring 
$K(X)$ is generated by line bundles as a ring.

\begin{lemma} 
\label{lem:initial_condition} 
Suppose that $K(X)$ is generated by line bundles as a ring.  
Then one of the following data determines the other two: 
\begin{itemize} 
\item[(1)] the small $J$-function $J|_{t=0}$;  
\item[(2)] the fundamental solutions $S|_{t=0}$, $T|_{t=0}$; 
\item[(3)] the small $q$-shift operators $\cA_i|_{t=0}$.  
\end{itemize} 
\end{lemma}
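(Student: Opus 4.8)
The plan is to prove the three-way equivalence $(1)\Leftrightarrow(2)\Leftrightarrow(3)$ by showing $(2)\Rightarrow(1)$, $(1)\Rightarrow(2)$, $(2)\Leftrightarrow(3)$, using the hypothesis that $K(X)$ is generated by line bundles only where it is genuinely needed (namely in $(1)\Rightarrow(2)$ and $(3)\Rightarrow(2)$).

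First, $(2)\Rightarrow(1)$ is immediate from the definition $J = (1-q)T\Phi_0$ given in the excerpt, and needs no assumption on $K(X)$. Next, for $(2)\Leftrightarrow(3)$: given $S|_{t=0}$ and $T=S^{-1}$, the operator $A_i|_{t=0}$ is read off directly from the defining formula \eqref{eq:def_A}, so $(2)\Rightarrow(3)$ is also unconditional. The converse $(3)\Rightarrow(2)$ is where I would first use the line-bundle hypothesis: from $\cA_i|_{t=0}$ we know $A_{i,\rm com}|_{t=0} = A_i|_{q=1,t=0}$, and since the $P_i$ generate $K(X)$ as a ring and $A_{i,\rm com}$ represents quantum multiplication by $A_{i,\rm com}1$, with $A_{i,\rm com}1 \equiv P_i^{-1} \pmod Q$ by Proposition \ref{prop:A_expansion}, the classical ($Q=0$) limits $P_i^{-1}$ generate $K(X)$; hence words in the $A_{i,\rm com}|_{t=0}$ applied to $\Phi_0$ span $K(X)\otimes\Q[\![Q]\!]$ (by the usual Nakayama-type argument over the complete local ring $\Q[\![Q]\!]$). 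Now I recover $T|_{t=0}$ as follows: the relation $P_i^{-1}q^{Q_i\partial_{Q_i}}\circ T = T\circ\cA_i$ at $t=0$ from \eqref{eq:A_S} expresses $T(w\cdot\Phi_0)$, for any word $w$ in the $\cA_i$, in terms of the $q$-shift operators and $T\Phi_0$; and $T\Phi_0$ is pinned down by $T|_{q=\infty}=T|_{Q=0}=\id$ together with the difference equation $P_i^{-1}q^{Q_i\partial_{Q_i}}T\Phi_0 = T\cA_i\Phi_0$, solved recursively in the $Q$-adic filtration. Since the words span, this determines $T|_{t=0}$ on all of $K(X)\otimes\Q(\!(Q)\!)$, hence over $\Q[\![Q]\!]$, and $S|_{t=0}=T|_{t=0}^{-1}$.

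For $(1)\Rightarrow(2)$: this is the standard Givental-style argument that the $J$-function determines the fundamental solution when $K(X)$ is generated by line bundles. The input from the $J$-function is the set of one-point descendants $\langle t,\dots,t,\Phi_\alpha/(1-qL)\rangle$ at $t=0$, i.e.\ the "first column" $S^{-1}_{\bullet 0}|_{t=0}$ of the fundamental solution, equivalently $T\Phi_0|_{t=0}$. I would then use Remark \ref{rem:logQ} / the difference equation: the modified solution $\tT|_{t=0}$ satisfies $q^{Q_i\partial_{Q_i}}\tT = \tT\cA_i$, and because the $A_{i,\rm com}$ (hence the $\cA_i$) are built from $S$, the vectors $\tT(P_{i_1}^{-1}\cdots P_{i_k}^{-1}\Phi_0)|_{t=0}$ can be obtained by applying suitable difference operators to $\tJ|_{t=0}$ — more precisely, since $K(X)$ is spanned by monomials in the $P_i$ and $\tT$ intertwines $q$-shifts with the $\cA_i$, one recovers $\tT$ on a spanning set from its value on $\Phi_0$ together with the intertwining relations; the residual data needed (the $\cA_i$ themselves) is in turn determined by $S|_{t=0}$ via \eqref{eq:def_A}, so one solves the coupled system recursively in the $Q$-adic order, starting from $\tT\equiv\id\pmod Q$. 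Finally $T|_{t=0}$ is recovered from $\tT|_{t=0}$ by the explicit twist $\prod_i P_i^{\log Q_i/\log q}$, and $S|_{t=0}=T|_{t=0}^{-1}$.

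The main obstacle is the coupled/recursive nature of $(1)\Rightarrow(2)$: the $q$-shift operators $\cA_i$ used to propagate $\tJ$ into the full matrix $\tT$ are themselves expressed in terms of $S=T^{-1}$, so one cannot simply "apply known operators" — one must organize the recursion by $Q$-degree, noting that $\cA_i = P_i^{-1}q^{Q_i\partial_{Q_i}} \pmod Q$ and that the degree-$d$ part of $\cA_i$ depends only on $T$ in degrees $<d$ (cf.\ the proof of Proposition \ref{prop:A_expansion}), so that at each stage the operator needed to compute the next coefficient of $\tT$ is already known. I expect the bookkeeping here — checking that the line-bundle spanning property together with the triangularity in $Q$ really closes the recursion, and that no information beyond $J|_{t=0}$ leaks in — to be the technical heart of the lemma, whereas $(2)\Rightarrow(1)$, $(2)\Rightarrow(3)$ and the $\tT \leftrightarrow T$ twist are essentially formal.
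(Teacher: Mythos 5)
Your treatment of $(2)\Rightarrow(1)$, $(2)\Rightarrow(3)$, and $(3)\Rightarrow(2)$ is essentially the same as the paper's (and in fact you over-use the line-bundle hypothesis in $(3)\Rightarrow(2)$: the $q$-difference equation $P_i^{-1}q^{Q_i\partial_{Q_i}}T = TA_i$ together with $T|_{Q=0}=\id$ determines $T$ by a straightforward $Q$-adic recursion without invoking any spanning set, exactly as the paper does). The genuine problem is in $(1)\Rightarrow(2)$, which you correctly flag as the technical heart, but the closing claim you rely on there is false. You assert that ``the degree-$d$ part of $\cA_i$ depends only on $T$ in degrees $<d$,'' citing the proof of Proposition~\ref{prop:A_expansion}. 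But that very proof gives
\[
A_{i,d} \;=\; \sum_{d'+d''=d} S_{d'}\, P_i^{-1}\, q^{d_i''}\, T_{d''},
\]
and the extreme terms $d'=d$, $d''=0$ and $d'=0$, $d''=d$ contribute $S_d P_i^{-1}$ and $P_i^{-1}q^{d_i}T_d$, which involve $T_d$ and $S_d=-T_d+(\text{lower order})$. So $A_{i,d}$ genuinely depends on $T_d$ through the non-trivial combination $P_i^{-1}q^{d_i}T_d - T_dP_i^{-1}+\cdots$, and your $Q$-adic recursion is not triangular as stated: at step $d$ the unknowns $T_d$ and $A_{i,d}$ are coupled.

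The paper resolves this coupling by a different mechanism which your proposal does not supply: a Birkhoff-type factorization in the auxiliary variable $q$. Writing $M$ for the explicit matrix with $\alpha$-th column $\frac{1}{1-q}F_\alpha(P_1^{-1}q^{Q_1\partial_{Q_1}},\dots,P_r^{-1}q^{Q_r\partial_{Q_r}})J|_{t=0}$ (computable purely from the small $J$-function, with no reference to $\cA_i$) and $U$ for the matrix with columns $F_\alpha(\cA_1,\dots,\cA_r)\Phi_0|_{t=0}$, one has $M = T|_{t=0}\,U$. Expanding in $Q$ with $T_0 = U_0 = \id$ gives $M_d = T_d + U_d + (\text{known lower-order terms})$, so the recursion hands you the \emph{sum} $T_d + U_d$ but not yet each summand. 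The crucial extra input is the pole structure in $q$: $T_d$ has poles only at roots of unity and vanishes at $q=\infty$ for $d\neq 0$, while $U_d$ is a Laurent polynomial in $q$. These two function classes intersect trivially, so $T_d$ and $U_d$ are recovered uniquely from their sum (essentially by partial fractions: principal parts at roots of unity versus polynomial part). This $q$-splitting is what closes the recursion; without it, the coupling you noticed between $T_d$ and $\cA_{i,d}$ is not resolved, and your argument for $(1)\Rightarrow(2)$ has a gap.
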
 

If the classical $K$-ring $K(X)$ is generated by line bundles as a ring, 
the assumption in Theorem \ref{thm:reconstruction} 
is automatically satisfied since we have $A_{i,\rm com} = P_i^{-1} 
+ O(Q)$ (see Proposition \ref{prop:A_expansion}).  
Therefore Theorem \ref{thm:reconstruction} have the following corollary: 
\begin{corollary}[Lee-Pandharipande \cite{Lee-Pandharipande}] 
\label{cor:Lee-Pandharipande} 
Suppose that $K(X)$ is generated by line bundles as a ring. 
Then the quantum $K$-theory at genus zero can be reconstructed 
from the small $J$-function $J|_{t=0}$. 
\end{corollary}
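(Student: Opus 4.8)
The plan is to obtain Corollary \ref{cor:Lee-Pandharipande} as a formal consequence of Theorem \ref{thm:reconstruction} and Lemma \ref{lem:initial_condition}, the only real work being to check that the cyclicity hypothesis of Theorem \ref{thm:reconstruction} is automatic once $K(X)$ is generated by line bundles.

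First I would verify that $1\in K(X)$ is a cyclic vector for the $\Q(\!(Q)\!)[a_1,\dots,a_r]$-module $K(X)\otimes\Q(\!(Q)\!)$ in which $a_i$ acts by $A_{i,\rm com}|_{t=0}$. By Proposition \ref{prop:A_expansion} we have $A_{i,\rm com}|_{t=0}=P_i^{-1}+O(Q)$ in $\End(K(X))\otimes\Q[\![Q]\!]$. Since $\ch\colon K^0(X;\Q)\to H^{\rm even}(X;\Q)$ is an isomorphism and a torsion line bundle has trivial rational Chern character, every line bundle $L$ satisfies $[L]=\prod_i[P_i]^{a_i}$ in $K(X)$, where $c_1(L)\equiv\sum_i a_i p_i$ modulo torsion; hence the subring of $K(X)$ generated by all line bundles coincides with the one generated by $[P_1]^{\pm1},\dots,[P_r]^{\pm1}$. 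Moreover $[P_i]^{-1}-1$ is nilpotent, so $[P_i]=(1+([P_i]^{-1}-1))^{-1}$ is a polynomial in $[P_i]^{-1}$; therefore this subring is already generated by $P_1^{-1},\dots,P_r^{-1}$, and under the hypothesis it equals $K(X)$. Consequently there are monomials $m_1,\dots,m_{N+1}$ in the variables $P_i^{-1}$ whose images form a $\Q$-basis of $K(X)$. Replacing each $P_i^{-1}$ by $A_{i,\rm com}|_{t=0}$ and applying to $1$ yields elements $m_j(A_{\rm com})\cdot 1\in K(X)\otimes\Q[\![Q]\!]$ that reduce modulo $Q$ to this basis; the determinant of the matrix expressing them in $\{\Phi_0,\dots,\Phi_N\}$ is thus nonzero modulo $Q$, hence a unit of $\Q(\!(Q)\!)$, so the $m_j(A_{\rm com})\cdot 1$ span $K(X)\otimes\Q(\!(Q)\!)$ and $1$ is cyclic.

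With the hypothesis of Theorem \ref{thm:reconstruction} verified, that theorem yields that the small $q$-shift operators $\cA_i|_{t=0}$ reconstruct the entire genus-zero quantum $K$-theory --- the potential $\cF$, the metric $G$, the products $\Phi_\alpha\bullet$, the operators $\cA_i$ and the fundamental solutions $S$, $T$ (and, via the Lagrangian cone, all genus-zero descendant invariants). Finally, because $K(X)$ is generated by line bundles, Lemma \ref{lem:initial_condition} applies and shows that its item (3), the small $q$-shift operators $\cA_i|_{t=0}$, is determined by its item (1), the small $J$-function $J|_{t=0}$. Composing the two implications gives that $J|_{t=0}$ reconstructs the genus-zero quantum $K$-theory, which is the assertion of the corollary.

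As for difficulty, there is essentially no obstacle: the corollary is a routine combination of the two cited results. The only points requiring care are bookkeeping ones in the first step --- using monomials in the $P_i^{-1}$ rather than the $P_i$ (legitimate since $[P_i]^{-1}-1$ is nilpotent, so $[P_i]$ is a polynomial in $[P_i]^{-1}$), discarding torsion in $\Pic(X)$ after passing to $\Q$-coefficients, and phrasing the lift from a mod-$Q$ basis to a $\Q(\!(Q)\!)$-basis as ``a square matrix over $\Q[\![Q]\!]$ invertible modulo $Q$ is invertible over $\Q(\!(Q)\!)$'' rather than invoking Nakayama, the ambient module being a $\Q(\!(Q)\!)$-vector space rather than a module over the complete local ring.
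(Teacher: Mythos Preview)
Your proposal is correct and follows exactly the paper's approach: the paper derives the corollary in one sentence, noting that $A_{i,\rm com}=P_i^{-1}+O(Q)$ (Proposition \ref{prop:A_expansion}) forces the cyclicity hypothesis of Theorem \ref{thm:reconstruction}, and then Lemma \ref{lem:initial_condition} passes from $J|_{t=0}$ to $\cA_i|_{t=0}$. You have simply filled in the details the paper leaves implicit --- in particular the reduction from ``generated by line bundles'' to ``generated by $P_1^{-1},\dots,P_r^{-1}$'' via the Chern character and nilpotence of $P_i^{-1}-1$, and the passage from a basis modulo $Q$ to a basis over $\Q(\!(Q)\!)$.
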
  

The small $J$-function is known for complete flag manifolds 
\cite{Givental-Lee} and certain complete intersections in 
products of projective spaces \cite{Givental-Tonita}.  
We consider examples of $\C\Proj^1$, $\C\Proj^2$ 
and $\Fl_3$ in the next section. 

\begin{remark} 
The condition that $K(X)\otimes \Q(\!(Q)\!)$ is a cyclic 
$\Q(\!(Q)\!)[a_1,\dots,a_r]$-module is stronger than 
the condition that $K(X)$ is generated by line bundles. 
In this paper we do not give an example where 
$K(X)$ is not generated by line bundles but $K(X) \otimes \Q(\!(Q)\!)$ 
is a cyclic $\Q(\!(Q)\!)[a_1,\dots,a_r]$-module. 
This probably happens for Grassmannians $\Gr(r,n)$ 
with $(n,r!) =1$ because in that case the quantum cohomology 
of $\Gr(n,r)$ is generated by divisors. 
\end{remark} 

\begin{proof}[{{\bf Proof of Lemma \ref{lem:initial_condition}}}] 
The data (2) determines (1) and (3): this is obvious from the 
definitions of $J$ and $\cA_i$. 
The data (3) determines (2): we can solve the difference equation 
$P_i^{-1}q^{Q_i\partial_{Q_i}} T = TA_i$ for $T$ 
(see \eqref{eq:A_S}) with the initial condition $T|_{t=0,Q=0} = \id$. 
See the proof of Proposition \ref{prop:convergence_S} for more details. 

It suffices to show that the data (1) determines (2). 
By the assumption, we have polynomials 
$F_0,\dots,F_N\in \Q[x_1,\dots,x_r]$ 
such that $F_\alpha(P_1^{-1},\dots,P_r^{-1}) = \Phi_\alpha$. 
By Theorem \ref{thm:fund_sol} and the 
definition of the $J$-function, we have: 
\[
\frac{1}{1-q} 
F_\alpha(P_1^{-1} q^{Q_1 \partial_{Q_1}},
\dots,P_r^{-1} q^{Q_r\partial_{Q_r}}) J\Big|_{t=0} 
=  T F_\alpha(\cA_1,\dots,\cA_r) \Phi_0 \Big|_{t=0}.
\]
We solve for $T|_{t=0}$ from this 
equation. 
Let $M$ denote the $(N+1)\times (N+1)$-matrix whose 
$\alpha$-th column is given by the left-hand side 
of this equation. This is determined by the small $J$-function. 
Let $U$ denote the $(N+1)\times (N+1)$-matrix 
with the $\alpha$-th column 
given by $F_\alpha(\cA_1,\dots,\cA_r) \Phi_0|_{t=0}$. 
The above equation can be written as 
\begin{equation} 
\label{eq:Birkhoff}
M = T|_{t=0} U. 
\end{equation} 
We view this as an analogue of Birkhoff factorization 
(see, e.g.~\cite{Coates-Givental, Guest}) appearing in 
quantum $D$-module theory. 
Expand $M, T|_{t=0}, U$ in $Q$-series as 
$M = \sum_d M_d Q^d$, $T|_{t=0}= \sum_{d} T_d Q^d$ 
and $U = \sum_{d} U_d Q^d$. 
As functions in $q$, $T_d$ has poles only at 
roots of unity and vanishes at $q=\infty$ if $d\neq 0$. 
Also $U_d$ is a Laurent polynomial in $q$. 
Using $T_0 =U_0= \id$, we have the equation 
\[
M_d = T_d + U_d+ \sum_{d = d' + d'', d'\neq 0, d'' \neq 0} T_{d'} U_{d''}. 
\]
This equation can be solved recursively. 
In fact, choose an ample class $\omega$ and suppose that 
we know $T_{d'}$, $U_{d'}$ for all ${d'}$ with $\pair{\omega}{d'}
< \pair{\omega}{d}$. Then the above equation 
determines $T_d + U_d$, and the above mentioned properties 
of $T_d$, $U_d$ as functions in $q$ determine each 
summand $T_d$, $U_d$ uniquely. 
Therefore $T|_{t=0}$ is determined by the small 
$J$-function. This completes the proof. 
\end{proof} 

Let $\Omega_\alpha(Q,t)\in \End(K(X))\otimes \Q[\![Q,t]\!]$ 
be the matrix of quantum multiplication 
by $\Phi_\alpha$. 
The differential equations for the $q$-shift 
operators $\cA_i=A_i(q,Q,t) \, q^{Q_i\partial_{Q_i}}$ 
(see Remark \ref{rem:Lax}) 
can be written as: 
\begin{equation}
\label{system}
(1-q)\partial_\alpha A_i(q,Q,t) = A_i(q,Q,t) \Omega_\alpha(q^{e_i}Q,t) -
\Omega_\alpha(Q,t) A_i(q,Q,t), \quad 0\leq \alpha\leq N,
\end{equation} 
where  $\partial_\alpha = \partial/\partial t^\alpha$ and 
\[
q^{e_i}Q := (Q_1,\dots,Q_{i-1},qQ_i,Q_{i+1},\dots, Q_r).
\]
The following lemma is a key to prove Theorem \ref{thm:reconstruction}. 
\begin{lemma} 
\label{lem:rational_expression}
Suppose that $K(X)\otimes \Q(\!(Q)\!)$ is a cyclic 
$\Q(\!(Q)\!)[a_1,\dots,a_r]$-module. 
Then entries of the matrix $\Omega_\alpha$ can be expressed 
as rational functions of the entries of the matrices $A_{i,\rm com}
=A_i|_{q=1}$, $i=1,\dots,r$. 
The rational functions here have coefficients in $\Q$ and 
are determined only by $a_i=A_{i,\rm com}|_{t=0}$, 
$1\le i\le r$.  
\end{lemma}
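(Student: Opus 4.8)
The plan is to convert the cyclicity hypothesis into an explicit ``monomial basis'' of the quantum $K$-ring and then to recover each $\Omega_\alpha$ as the matrix of a change of basis. First I would show that, over the field $\Q(\!(Q)\!)$, the cyclic vector may be taken to be $\Phi_0$: the small quantum $K$-ring $R_0 := K(X)\otimes\Q(\!(Q)\!)$ acts faithfully on itself since it is unital, so the subalgebra $\mathcal B\subseteq\End(R_0)$ generated by the operators $a_i=A_{i,\rm com}|_{t=0}$ is, since $A_{i,\rm com}$ acts as quantum multiplication by $A_{i,\rm com}\Phi_0$ (Corollary \ref{cor:Acom}), identified with the $\Q(\!(Q)\!)$-subalgebra of $R_0$ generated by $b_i:=A_{i,\rm com}\Phi_0|_{t=0}$; hence $\dim_{\Q(\!(Q)\!)}\mathcal B\le N+1$. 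Cyclicity provides a surjection $\mathcal B\twoheadrightarrow R_0$, $p\mapsto p\cdot v$, whence $\dim_{\Q(\!(Q)\!)}\mathcal B\ge N+1$. Therefore $\mathcal B=R_0$, the element $\Phi_0$ is itself a cyclic vector, and the vectors $m(a)\Phi_0$ with $m$ a monomial in $r$ variables span $R_0$. Selecting a basis greedily and beginning with the empty monomial, I fix monomials $m_0=1,m_1,\dots,m_N$ such that $m_0(a)\Phi_0,\dots,m_N(a)\Phi_0$ form a $\Q(\!(Q)\!)$-basis of $R_0$.

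Next I would deform in $t$ and solve for $\Omega_\alpha$. Put $w_j(Q,t):=m_j(A_{1,\rm com},\dots,A_{r,\rm com})\Phi_0\in K(X)\otimes\Q[\![Q,t]\!]$ and let $W(Q,t)$ be the matrix whose columns are $w_0,\dots,w_N$ in the basis $\Phi_0,\dots,\Phi_N$. Then $\det W\in\Q[\![Q,t]\!]$ has value at $t=0$ nonzero in $\Q(\!(Q)\!)$ by the previous paragraph, so $\det W$ is a unit of $\Q(\!(Q)\!)[\![t]\!]$ and $W$ is invertible over that ring. Expanding $\Phi_\alpha=\sum_{j=0}^N c_{\alpha j}(Q,t)\,w_j$ with $c_{\alpha j}=(W^{-1}e_\alpha)_j$ ($e_\alpha$ the coordinate vector of $\Phi_\alpha$), and using that each $A_{i,\rm com}$ is quantum multiplication by $b_i(Q,t):=A_{i,\rm com}\Phi_0$ — so that $w_j$ equals $m_j(b_1,\dots,b_r)$ in the big quantum $K$-ring and $m_j(A_{1,\rm com},\dots,A_{r,\rm com})$ is precisely quantum multiplication by $w_j$ — I obtain
\[
\Omega_\alpha=\sum_{j=0}^N c_{\alpha j}(Q,t)\, m_j(A_{1,\rm com},\dots,A_{r,\rm com}), \qquad c_{\alpha j}=(W^{-1}e_\alpha)_j .
\]

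Finally I would package this as the asserted rational expression. Each entry of $W$ is a polynomial with integer coefficients in the entries of $A_{1,\rm com},\dots,A_{r,\rm com}$, so by Cramer's rule the entries of $W^{-1}=\det(W)^{-1}\operatorname{adj}(W)$, and hence the $c_{\alpha j}$, are rational functions over $\Q$ in those entries; substituting into the display exhibits every entry of $\Omega_\alpha$ as such a rational function. The only choice made was that of the monomials $m_0,\dots,m_N$, which was made using the operators $a_i=A_{i,\rm com}|_{t=0}$ alone (one needs $\det W|_{t=0}\neq 0$), so the rational functions are determined by the $a_i$. I expect the crux to be the first two paragraphs — the dimension count over $\Q(\!(Q)\!)$ pinning down the cyclic vector and producing the monomial basis, and the unit-ness of $\det W$ over $\Q(\!(Q)\!)[\![t]\!]$; with those in hand, the formula for $\Omega_\alpha$ and its rationality follow immediately, and this formula is what feeds into the proof of Theorem \ref{thm:reconstruction}.
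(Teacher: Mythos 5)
Your proposal is correct and follows essentially the same route as the paper: exploit the commutativity of $A_{i,\rm com}$ with the quantum product (Corollary \ref{cor:Acom}) to write $\Omega_\alpha$ via Cramer's rule against a basis of the form $\{m_j(A_{*,\rm com})\Phi_0\}$, whose invertibility at $t=0$ propagates to formal $t$. The only substantive difference is how one sees that $\Phi_0$ can serve as the generator: you prove it directly by a dimension count on the subalgebra $\mathcal B\subseteq\End(R_0)$ generated by the $a_i$, whereas the paper picks an arbitrary cyclic vector $v$ and deduces linear independence of $B_\alpha=F_\alpha(a)\Phi_0$ from $v\bullet B_\alpha=F_\alpha(a)v$ — both are short, valid arguments for the same intermediate fact, and the remainder of the two proofs coincide.
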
 
\begin{proof} 
By the assumption, there exists an element $v\in K(X) \otimes \Q(\!(Q)\!)$ 
such that $\Q(\!(Q)\!)[a_1,\dots,a_r] v = K(X)\otimes \Q(\!(Q)\!)$.  
One can find polynomials $F_\alpha(x_1,\dots,x_r) 
\in \Q[x_1,\dots, x_r]$ with rational coefficients, 
$\alpha=0,\dots, N$ 
such that $F_\alpha(a_1,\dots,a_r) v$, $\alpha=0,\dots, N$ 
form a $\Q(\!(Q)\!)$-basis of $K(X) \otimes \Q(\!(Q)\!)$. 
Set $B_\alpha = F_\alpha(a_1,\dots,a_r)\Phi_0$. 
Then one has $[v\bullet B_\alpha]_{t=0}= 
F_\alpha(a_1,\dots,a_r) [v\bullet \Phi_0]_{t=0} = F_\alpha(a_1,\dots,a_r) v$ 
by Corollary \ref{cor:Acom}. 
Therefore $\{B_\alpha: 0\le \alpha\le N\}$ is linearly 
independent over $\Q(\!(Q)\!)$ and form a basis of $K(X)\otimes \Q(\!(Q)\!)$. 
Since $A_{i,\rm com}$ commutes with 
$\Omega_\alpha$ (Corollary \ref{cor:Acom}), we have 
\begin{align*} 
\Omega_\alpha 
F_\beta(A_{1,\rm com},\dots,A_{r,\rm com}) \Phi_0
& = F_\beta(A_{1,\rm com},\dots, A_{r,\rm com}) \Omega_\alpha \Phi_0 \\ 
& = F_\beta(A_{1,\rm com},\dots, A_{r, \rm com}) \Phi_\alpha 
\end{align*} 
where we used $\Omega_\alpha \Phi_0 = \Phi_\alpha \bullet \Phi_0 = \Phi_\alpha$. 
This should be viewed as a matrix equation: 
\begin{equation} 
\label{eq:Omega_determined} 
\Omega_\alpha 
\begin{pmatrix} 
\vert &  & \vert \\ 
F_0(A_{*, \rm com}) \Phi_0 & \cdots & F_N(A_{*,\rm com}) \Phi_0 \\
\vert &  & \vert 
\end{pmatrix} 
= 
\begin{pmatrix} 
\vert &  & \vert \\ 
F_0(A_{*, \rm com}) \Phi_\alpha & \cdots & F_N(A_{*,\rm com}) \Phi_\alpha \\
\vert &  & \vert 
\end{pmatrix}. 
\end{equation} 
The matrix 
$[F_0(A_{*, \rm com}) \Phi_0 , \dots, F_N(A_{*,\rm com}) \Phi_0]$ 
in the left-hand side is invertible since it coincides with 
$[B_0,\dots,B_N]$ at $t=0$.  
By inverting this matrix, we obtain rational function 
expressions for entries of $\Omega_\alpha$. 
\end{proof} 

\begin{proof}[{{\bf Proof of Theorem \ref{thm:reconstruction}}}] 
By Lemma \ref{lem:rational_expression}, $\Omega_\alpha$ is a 
rational function in all the entries of $A_{i,\rm com}=A_i|_{q=1}$, $i=1,\dots,r$. 
Therefore the equation \eqref{system} gives a closed 
differential equation for unknown functions $A_i$, $i=1,\dots,r$.  
Expanding  $A_i$ in power series of $t$, we write 
$A_i = \sum_{n\ge 0} A_i^{(n)}$, 
$A_i^{(\le n)} = \sum_{k=0}^n A_i^{(k)}$, where $A_i^{(n)}$ 
is the degree $n$ homogeneous part with respect to $t$. 
Then equation \eqref{system} determines 
$A_i^{(n+1)}$ uniquely in terms of $A_i^{(\le n)}$, since the degree 
$\le n$ part of $\Omega_\alpha$ is a rational function of $A_i^{(\le n)}$ 
by Lemma \ref{lem:rational_expression}. 
Therefore $A_i$, $i=1,\dots,r$ and $\Omega_\alpha$, $\alpha=0,\dots,N$ 
are uniquely determined by $A_i|_{t=0}$. 

The fundamental solution $T$ is uniquely determined from $A_i$ 
as a solution to the difference equation 
$P_i^{-1} q^{Q_i \partial_{Q_i}} T = T A_i$ 
with the initial condition $T|_{t=0, Q=0} = \id$ 
(see the proof of Proposition \ref{prop:convergence_S}). 
The metric $G$ is determined from $T$ as 
$G(\Phi_\alpha,\Phi_\beta) = g(\ov{T}\Phi_\alpha, T \Phi_\beta)$ 
(Proposition \ref{prop:unitarity}). By the string equation 
one has $F = F_{000} - (\chi(\cO) + \chi(t) + \frac{1}{2}\chi(t\otimes t))$. 
Since $F_{000} = G_{00}$, $F$ is determined by the metric $G$. 
\end{proof} 

\begin{remark} 
\label{rem:algorithm} 
Suppose that $K(X)$ is generated by line bundles as a ring. 
In this case we can give a more effective reconstruction algorithm: 
we can determine the coefficients of $A_i$, $\Omega_\alpha$ 
in front of $Q^d$ (for a fixed $d$) as polynomials in $t^\alpha$ 
and $e^{t^\alpha}$. 
This method will be also illustrated in \S \ref{sec:example} below.  
Expand $A_i$, $\Omega_\alpha$ as 
\[
A_i = \sum_{d\in \Eff(X)} \sum_{k=0}^{\max(d_i-1,0)} 
A_{i,d,k}(t)  (1-q)^k Q^d, \quad 
\Omega_\alpha(Q,t) = \sum_{d\in \Eff (X) } 
\Omega_{\alpha,d}(t) Q^d. 
\]
Let $\omega$ be an ample class, so that $\pair{\omega}{d}\geq 0$ for
all $d\in \Eff(X)$ with equality if and only if $d=0$.  
Firstly we refine Lemma \ref{lem:rational_expression} as: 
\emph{the entries of the matrix $\Omega_{\alpha,d}$ with 
$0\leq \alpha \leq N$, $d\in \Eff(X)$ are polynomial 
expressions in the entries of the matrices
$A_{i,d',0}$ for which} 
\begin{equation*}
1\leq i\leq r  \quad\text{and} \quad 
\left\{ 0<\pair{\omega}{d'} < \pair{\omega}{d}  
\ \  \text{or} \ \ d'=d \right\}.  
\end{equation*}
\emph{Moreover, if each entry of $A_{i,d',0}$ is assigned degree 
$d'\in \Eff(X)$, then the above polynomials are homogeneous of degree $d$.} 
This can be proved by taking $F_\alpha(x_1,\dots,x_r)$ in the proof of 
Lemma \ref{lem:rational_expression} to be polynomials 
such that $P_\alpha(P_1^{-1},\dots,P_r^{-1}) = \Phi_\alpha$. 
Suppose by induction that we know $A_{i,d',k}(t)$ for all $i=1,2,\dots,r$, 
$k\geq 0$ and $d'\in \Eff(X)$ with $\pair{\omega}{d'} <\kappa$.  
We want to determine $A_{i,d,k}$. 
Comparing the coefficients in front of $Q^d$ and $(1-q)^k$ in
\eqref{system} we get (assuming that $k\geq 1$)
\begin{equation}
\label{ode}
\partial_\alpha A_{i,d,k-1} = \sum_{d'+d''=d} \,\left(
[A_{i,d',k},\Omega_{\alpha,d''}] +  \sum_{k'+k''=k, \, k''\geq 1} (-1)^{k''}{d_i''
  \choose k''}A_{i,d',k'}\, \Omega_{\alpha,d''}
\right),
\end{equation}
where we may assume that the summation indices $k'$ and $k''$ satisfy
the inequalities $
0\leq k'\leq \max(d_i'-1,0)$, $1\leq k''\leq d_i''$ 
because if otherwise, then $A_{i,d',k'}=0$. 
The matrix $\Omega_{\alpha,d}$ can be decomposed as 
$\Omega_{\alpha,d}^{(1)} + \Omega_{\alpha,d}^{(2)}$, 
where the entries of $\Omega_{\alpha,d}^{(1)}$ are linear combinations
of the entries of $A_{j,d,0}$ for $j=1,2,\dots,r$, while the entries
of the second matrix are polynomials of the entries of $A_{j,d',0}$
for $1\leq j\leq r$ and $\pair{\omega}{d'}<\kappa$. 
The contribution on the right-hand side of \eqref{ode} 
of the terms that we have not  determined yet
on the previous steps is the following: 
\begin{equation}
\label{eq:linear-op}
[A_{i,d,k},\Omega_{\alpha,0}] 
+(-1)^k{d_i\choose k} A_{i,0,0}\, \Omega^{(1)}_{\alpha,d}.
\end{equation}
Let $Y$ be the vector formed by all entries of all matrices $A_{i,d,k}$ for
$1\leq i\leq r$ and $0\leq k\leq d_i-1$.  
It lies in the vector space
$ \mathfrak{X}_d: = \prod_{i=1}^r\,\prod_{k=0}^{d_i-1}\, 
\End(K(X))\otimes \C$. 
Formula \eqref{eq:linear-op} defines a linear operator
$L_\alpha:\mathfrak{X}_d\to \mathfrak{X}_d$, so the differential
equation \eqref{ode} assumes the form
\[
\partial_\alpha\, Y = L_\alpha(Y) + B_\alpha(t),\quad 0\leq \alpha\leq N,
\]
where $B_\alpha(t)$ is some explicit function determined from the
previous steps. This differential equation has a unique solution 
when the initial condition $A_i|_{t=0}$ is given. 
Solving the differential equation for $Y(t)$ 
is an algebraic problem, which depends on the spectral 
decomposition of the linear operator $L_\alpha$ (see \cite{In}). 
In particular, our argument inductively shows that
the entries of $A_{i,d,k}(t)$ are polynomials in $t^0,\dots,t^N$ and
$e^{t^0},\dots,e^{t^N}$. This is compatible with 
the polynomiality in Corollary \ref{cor:polynomiality}. 
\end{remark}

\section{Examples} 
\label{sec:example} 

In this section we illustrate the reconstruction by computing 
the potential of $\C\Proj^1$ up to degree $4$ 
and of $\C\Proj^2$ up to degree $3$, which will give us
the opportunity to go through all the steps of the algorithm 
outlined in Remark \ref{rem:algorithm}. 
We also compute the small quantum product of 
the complete flag manifold $\Fl_3$ and show its 
finiteness. 

\subsection{$q$-shift operator for projective spaces}  
The $K$-ring of $\C\Proj^{N}$  is
\begin{align*}
K(\C\Proj^N)= \frac{\Q[P,P^{-1}]}{
\left( (1-P^{-1})^{N+1}\right) },
\end{align*}
where $P=\mathcal{O}(1)$. 
The small $J$-function of $\C\Proj^N$ is known to be 
\cite{Lee:thesis, Givental-Lee, Givental-Tonita} 
\begin{align}
\label{eq:J_projective}
 J = (1-q)\sum_{d=0}^\infty 
\frac{Q^d}{\prod^d_{r=1}(1-P^{-1}q^r)^{N+1}}.
\end{align}
It satisfies the equation
\begin{align}
\label{fdj}
\left(1-P^{-1}q^{Q\partial_Q}\right)^{N+1} J = QJ. 
\end{align} 
\begin{proposition} 
\label{prop:A_projective} 
Consider the $q$-shift operator $\cA =A  q^{Q\partial_Q}$ 
for the projective space $\C\Proj^N$. 
The endomorphism $A|_{t=0}$ is represented by the matrix 
\[
I - 
\begin{pmatrix} 
0 & 0 &   \cdots & 0 & Q \\ 
1 & 0 &  \cdots & 0  & 0 \\
0 & 1 &  \cdots & 0 & 0 \\
\vdots &   \vdots & \ddots & \vdots & \vdots\\ 
0 & 0 &  \cdots & 1 & 0
\end{pmatrix} 
\]
in the basis $(1-P^{-1})^\alpha$, $\alpha=0,\dots,N$, 
where $I$ is the identity matrix. 
\end{proposition}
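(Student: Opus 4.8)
The plan is to determine $A|_{t=0}$ from the known small $J$-function \eqref{eq:J_projective}, the $q$-difference equation \eqref{fdj} it satisfies, and the Birkhoff-type factorization appearing in the proof of Lemma~\ref{lem:initial_condition}. Since the classes $\Phi_\alpha=(1-P^{-1})^\alpha$ do not depend on $Q$ and are therefore fixed by $q^{Q\partial_Q}$, the operators $A|_{t=0}$ and $\cA|_{t=0}=A|_{t=0}\,q^{Q\partial_Q}$ act identically on this basis; so it suffices to compute $\cA|_{t=0}(1-P^{-1})^\alpha$ for $0\le\alpha\le N$.

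First I would record two consequences of the available data. (a) Applying $(1-P^{-1}q^{Q\partial_Q})^{N+1}$ to the identity $J=(1-q)T\Phi_0$, intertwining with $T$ via $P^{-1}q^{Q\partial_Q}\circ T=T\circ\cA$ (see \eqref{eq:A_S}), using \eqref{fdj} and the fact that $T$ commutes with multiplication by $Q$, and then applying $T^{-1}$, one gets
\[
(1-\cA)^{N+1}\Phi_0 = Q\,\Phi_0 ,
\]
which holds in particular at $t=0$; this may also be deduced from Proposition~\ref{prop:J_generator}. (b) I would specialize the factorization \eqref{eq:Birkhoff} to $X=\C\Proj^N$ with $F_\alpha(x)=(1-x)^\alpha$, so that $F_\alpha(P^{-1})=\Phi_\alpha$. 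It reads $M=T|_{t=0}\,U$, where the $\alpha$-th column of $M$ is $\frac{1}{1-q}(1-P^{-1}q^{Q\partial_Q})^\alpha J|_{t=0}$ and the $\alpha$-th column of $U$ is $(1-\cA|_{t=0})^\alpha\Phi_0$. Plugging in \eqref{eq:J_projective}, the $Q^d$-coefficient of the $\alpha$-th column of $M$ is $(1-P^{-1}q^d)^\alpha\prod_{r=1}^d(1-P^{-1}q^r)^{-(N+1)}$; it equals $(1-P^{-1})^\alpha$ when $d=0$, so $M|_{Q=0}=\id$, and for $d\ge 1$ one checks, writing $1-P^{-1}q^r=-P^{-1}q^r(1-Pq^{-r})$ and using that $1-P^{-1}$ is nilpotent, that it has poles only at roots of unity and vanishes at $q=\infty$ — the exponent of $q$ in its leading term being $d\bigl(\alpha-\tfrac{(N+1)(d+1)}{2}\bigr)$, which is negative since $\alpha\le N$. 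Hence $M$ has exactly the analytic properties of the left-hand factor and $\id$ those of the right-hand factor, so that $M=M\cdot\id$, together with the uniqueness established in the proof of Lemma~\ref{lem:initial_condition}, forces $T|_{t=0}=M$ and $U=\id$.

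From $U=\id$ one reads off $(1-\cA|_{t=0})^\alpha\Phi_0=(1-P^{-1})^\alpha$ for $0\le\alpha\le N$. Consequently, for $\alpha<N$,
\[
\cA|_{t=0}(1-P^{-1})^\alpha=(1-P^{-1})^\alpha-(1-\cA|_{t=0})^{\alpha+1}\Phi_0=(1-P^{-1})^\alpha-(1-P^{-1})^{\alpha+1},
\]
while for $\alpha=N$ consequence (a) gives $\cA|_{t=0}(1-P^{-1})^N=(1-P^{-1})^N-Q\Phi_0$. Reading these off in the basis $(1-P^{-1})^\alpha$ — and using that $A|_{t=0}$ acts the same way as $\cA|_{t=0}$ on these classes — yields precisely the matrix in the statement. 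I expect the only genuinely non-formal step to be the asymptotic analysis of \eqref{eq:J_projective} at $q=\infty$ in part (b); once that identifies $M$ and $\id$ as the two Birkhoff factors, uniqueness of the factorization does the rest.
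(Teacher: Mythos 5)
Your proposal is correct and follows essentially the same approach as the paper: identify the Birkhoff factors in \eqref{eq:Birkhoff} by showing that the column $(1-P^{-1}q^{Q\partial_Q})^\alpha(1-q)^{-1}J|_{t=0}$ already has poles only at roots of unity and tends to $(1-P^{-1})^\alpha$ as $q\to\infty$ (your asymptotic estimate of the leading exponent matches what the paper's formula \eqref{eq:T_alpha} encodes), and conclude $U=\id$. You then extract the matrix from $U=\id$ and the operator identity $\cA(1-\cA)^\alpha=(1-\cA)^\alpha-(1-\cA)^{\alpha+1}$ together with $(1-\cA)^{N+1}\Phi_0=Q\Phi_0$, which is just a slightly more explicit packaging of the paper's terse "the $q$-difference equation \eqref{fdj} gives the above presentation for $A|_{t=0}=T^{-1}P^{-1}q^{Q\partial_Q}T|_{t=0}$"; the two arguments are equivalent.
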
 
\begin{proof} 
We follow the construction of the difference operator $A|_{t=0}$ 
from the small $J$-function in Lemma \ref{lem:initial_condition}.  
Applying the difference operator $\left(1-P^{-1}q^{Q \partial_Q}\right)^\alpha$, 
$\alpha=0,\dots,N$ to $(1-q)^{-1} J$, we obtain: 
\begin{equation} 
\label{eq:T_alpha}
(1-P^{-1}q^{Q \partial_Q})^\alpha (1-q)^{-1} J  
= (1-P^{-1})^{\alpha} + 
\sum_{d=1}^\infty 
\frac{Q^d}{(1-q^d P^{-1})^{N+1-\alpha} 
\prod_{k=1}^{d-1} (1-q^k P^{-1})^{N+1}}. 
\end{equation} 
The coefficient of $Q^d$ of this expression 
is a rational function in $q$ 
which has poles only at roots of unity. 
Also this equals $(1-P^{-1})^\alpha$ for $q=\infty$. 
Therefore the Birkhoff-type factorization discussed 
in \eqref{eq:Birkhoff} is trivial in this case (i.e.~$U$ is the identity matrix), 
and the $\alpha$-th column of the fundamental solution $T|_{t=0}$ is 
given by $(1-P^{-1}q^{Q \partial_Q})^\alpha J/(1-q)$. 
The $q$-difference equation \eqref{fdj} gives the 
above presentation for $A|_{t=0} = 
T^{-1} P^{-1}q^{Q\partial_Q} T|_{t=0}$. 
\end{proof} 

The commutativity of the quantum product and the $q$-shift 
operator yields the following corollary 
(see the proof of Lemma \ref{lem:rational_expression}): 
\begin{corollary}[\cite{Buch-Mihalcea}] 
\label{cor:CP_product} 
The small quantum $K$-product of $\C\Proj^N$ is 
given by 
\[
(1-P^{-1})^\alpha \bullet (1-P^{-1})^\beta \Big|_{t=0} 
= \begin{cases} 
(1-P^{-1})^{\alpha+\beta} & \text{if } \alpha+\beta\le N \\ 
 Q & \text{if } \alpha+\beta =N+1 
 \end{cases} 
\]
where $0\le \alpha,\beta \le N$.  In particular 
it is finite as a power series in $Q$. 
\end{corollary}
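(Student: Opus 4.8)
The plan is to obtain Corollary \ref{cor:CP_product} as an immediate consequence of Proposition \ref{prop:A_projective} together with the commutation relation $[A_{\mathrm{com}}, (\Phi_\alpha\bullet)] = 0$ from Corollary \ref{cor:Acom}, exactly in the spirit of Lemma \ref{lem:rational_expression}. First I would set $q=1$ in the matrix of Proposition \ref{prop:A_projective}: since the off-diagonal entries of the correction matrix carry no factor of $q$, we get $A_{\mathrm{com}}|_{t=0} = A|_{q=1,t=0}$ represented, in the basis $e_\alpha := (1-P^{-1})^\alpha$ for $\alpha=0,\dots,N$, by the companion-type matrix sending $e_\alpha \mapsto e_{\alpha+1}$ for $\alpha<N$ and $e_N \mapsto Q\,e_0$. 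Equivalently, $A_{\mathrm{com}}|_{t=0}\, e_0 = e_1 = (1-P^{-1})$ and iterating, $(A_{\mathrm{com}}|_{t=0})^\alpha e_0 = e_\alpha$ for $0\le\alpha\le N$, while $(A_{\mathrm{com}}|_{t=0})^{N+1} e_0 = Q\,e_0$.

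Next I would exploit that $A_{\mathrm{com}}|_{t=0}$ represents quantum multiplication by $A_{\mathrm{com}}|_{t=0}\cdot 1 = e_1 = (1-P^{-1})$, and that it commutes with all small quantum products (Corollary \ref{cor:Acom}). Since $\Phi_0 = 1$ is the quantum identity, one has $(1-P^{-1})\bullet e_0 = e_1$; applying $A_{\mathrm{com}}|_{t=0}$ repeatedly and using that $A_{\mathrm{com}}|_{t=0}$ intertwines with $\bullet$, I get $(1-P^{-1})^{\bullet\,\alpha}|_{t=0} = e_\alpha$ for $\alpha\le N$, i.e.\ the $\alpha$-fold small quantum power of $(1-P^{-1})$ equals its classical power $(1-P^{-1})^\alpha$. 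Then for $\alpha+\beta\le N$, $e_\alpha\bullet e_\beta = (1-P^{-1})^{\bullet(\alpha+\beta)} = e_{\alpha+\beta}$; and the top relation $(A_{\mathrm{com}}|_{t=0})^{N+1} e_0 = Q e_0$ translates into $(1-P^{-1})^{\bullet(N+1)}|_{t=0} = Q\cdot 1$, which gives $e_\alpha\bullet e_\beta = Q$ when $\alpha+\beta=N+1$. The case $\alpha+\beta=N+1$ exhausts all remaining products because $e_0,\dots,e_N$ is a basis and $e_{N}\bullet e_1 = Q e_0$ already forces, by associativity and the multiplication rule, $e_\alpha\bullet e_\beta$ for $\alpha+\beta>N+1$ to be $Q$ times a lower product; since the tables close at $\alpha+\beta=N+1$, finiteness in $Q$ is manifest.

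One subtlety I would be careful about: I must verify that the matrix of Proposition \ref{prop:A_projective} indeed represents \emph{quantum multiplication}, not merely some commuting operator. This is guaranteed because $A_{i,\mathrm{com}}$ represents multiplication by $A_{i,\mathrm{com}}1$ (stated after Corollary \ref{cor:Acom} in the introduction, and following from Corollary \ref{cor:Acom} since $\Omega_0=\mathrm{id}$), and here $A_{\mathrm{com}}|_{t=0}\cdot 1 = e_1$. The only real content, then, is the elementary observation that the companion matrix has $e_0$ as a cyclic vector with the displayed orbit; everything else is formal manipulation with the commutation relation. I expect no genuine obstacle — the main point is simply to phrase the argument so the reader sees it as the $\mathbb{C}\mathbb{P}^N$ instance of Lemma \ref{lem:rational_expression}, with the cyclic generator $v=\Phi_0$ and $F_\alpha(x)=x^\alpha$. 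I would close by remarking that the resulting table agrees with Buch--Mihalcea \cite{Buch-Mihalcea}.
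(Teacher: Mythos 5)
Your strategy is exactly the one the paper intends (the paper's only justification is ``commutativity of the quantum product and the $q$-shift operator, see the proof of Lemma~\ref{lem:rational_expression}''), but there is a concrete error in the identification of the operator. Proposition~\ref{prop:A_projective} says $A|_{t=0}$ equals $I - M$, where $M$ is the companion-type matrix sending $e_\alpha \mapsto e_{\alpha+1}$ for $\alpha < N$ and $e_N \mapsto Q e_0$. You dropped the ``$I-$'': you assert that $A_{\rm com}|_{t=0}$ itself is $M$, hence that $A_{\rm com}|_{t=0}\,e_0 = e_1$ and $A_{\rm com}|_{t=0}\cdot 1 = 1-P^{-1}$. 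In fact $A_{\rm com}|_{t=0}\,e_0 = e_0 - e_1 = P^{-1}$, consistent with $A_i \to P_i^{-1}$ as $Q,t\to 0$ and with the fact that $A_{\rm com}$ represents quantum multiplication by $P^{-1}$, not by $1-P^{-1}$. As written, the identities $(A_{\rm com}|_{t=0})^\alpha e_0 = e_\alpha$ and $(A_{\rm com}|_{t=0})^{N+1}e_0 = Q e_0$ are false.

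The fix is mechanical: work with $B := \mathrm{id} - A_{\rm com}|_{t=0} = M$ throughout. Then $B\cdot 1 = 1 - P^{-1}$, $B$ commutes with all small quantum products (since $A_{\rm com}$ does), so $B = ((1-P^{-1})\bullet)|_{t=0}$, and $B^\alpha e_0 = e_\alpha$ for $\alpha\le N$, $B^{N+1}e_0 = Q e_0$. From here your argument — $e_\alpha\bullet e_\beta = B^{\alpha+\beta}1$ for $\alpha+\beta\le N+1$, plus the observation that higher cases reduce by $B^{N+1} = Q\cdot\mathrm{id}$ — gives the stated table and finiteness exactly as you say. So the approach is the paper's, and correct once $A_{\rm com}$ is replaced by $1 - A_{\rm com}$ in your computations.
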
 

Using the formula \eqref{eq:T_alpha} for $T (1-P^{-1})^\alpha|_{t=0}$ 
in the above proof and Proposition \ref{prop:unitarity}, 
we obtain  the pairing $G$ of $\C\Proj^N$ at $t=0$ as follows: 
\begin{corollary} 
The quantum $K$-theory pairing of $\C\Proj^N$ 
is given by $G(\Phi_\alpha,\Phi_\beta) |_{t=0} = 
g_{\alpha\beta} + Q/(1-Q)$, where $\Phi_\alpha = (1-P^{-1})^\alpha$ 
and $g_{\alpha\beta} = 
1$ if $\alpha+\beta \le N$ and $g_{\alpha\beta}=0$ otherwise.  
\end{corollary}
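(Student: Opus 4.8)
The plan is to combine the unitarity relation of Proposition \ref{prop:unitarity}(2) with the explicit expression \eqref{eq:T_alpha} for the columns of the fundamental solution $T$ at $t=0$ that was produced in the proof of Proposition \ref{prop:A_projective}. Write $u = P^{-1}$, so that $K(\C\Proj^N) = \Q[u]/((1-u)^{N+1})$, the classical pairing is $g(a,b)=\chi(ab)$, and $\chi\colon K(\C\Proj^N)\to\Q$ is the linear functional with $\chi((1-u)^m)=1$ for $0\le m\le N$; set $\Phi_\gamma=(1-u)^\gamma$, so $g_{\alpha\beta}=\chi((1-u)^{\alpha+\beta})$ is $1$ if $\alpha+\beta\le N$ and $0$ otherwise. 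Restricting Proposition \ref{prop:unitarity}(2) to $t=0$ gives
\[
G(\Phi_\alpha,\Phi_\beta)\big|_{t=0}=g\bigl(\ov{T\Phi_\alpha}\big|_{t=0},\,T\Phi_\beta\big|_{t=0}\bigr),
\]
and \eqref{eq:T_alpha} gives the closed form
\[
T\Phi_\gamma\big|_{t=0}=(1-u)^\gamma+\sum_{d\ge1}\frac{Q^d}{(1-q^d u)^{N+1-\gamma}\prod_{k=1}^{d-1}(1-q^k u)^{N+1}},
\]
an element of $K(\C\Proj^N)\otimes\Q(q)[\![Q]\!]$; each $1-q^k u$ is a unit there because $1-u$ is nilpotent and $1-q^k\in\Q(q)^\times$. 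The operation $\ov{\phantom{A}}$ just substitutes $q\mapsto q^{-1}$.

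First I would reduce to the case $\alpha=0$. Since $G$ is a Frobenius pairing for the quantum product, $G(\Phi_\alpha,\Phi_\beta)=G(\Phi_\alpha\bullet\Phi_\beta,\Phi_0)$, and the small product table of Corollary \ref{cor:CP_product} gives $\Phi_\alpha\bullet\Phi_\beta|_{t=0}=(1-u)^{\alpha+\beta}$ when $\alpha+\beta\le N$ and $Q\,(1-u)^{\alpha+\beta-N-1}$ when $\alpha+\beta>N$ (here $0\le\alpha,\beta\le N$ forces $\alpha+\beta-N-1\le N-1$). Hence every $G(\Phi_\alpha,\Phi_\beta)|_{t=0}$ is expressed through the $G(\Phi_0,\Phi_\gamma)|_{t=0}$, $0\le\gamma\le N$, and the stated formula $g_{\alpha\beta}+Q/(1-Q)$ follows once one knows $G(\Phi_0,\Phi_\gamma)|_{t=0}=1/(1-Q)$ for all such $\gamma$: indeed $\alpha+\beta\le N$ yields $G_{\alpha\beta}=G_{0,\alpha+\beta}=1/(1-Q)=1+Q/(1-Q)=g_{\alpha\beta}+Q/(1-Q)$, while $\alpha+\beta>N$ yields $G_{\alpha\beta}=Q\,G_{0,\alpha+\beta-N-1}=Q/(1-Q)=g_{\alpha\beta}+Q/(1-Q)$.

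Next I would compute $G(\Phi_0,\Phi_\gamma)|_{t=0}=\chi\bigl(\ov{Z}\cdot T\Phi_\gamma|_{t=0}\bigr)$, where $Z:=T\Phi_0|_{t=0}=(1-q)^{-1}J$, using a residue presentation of $\chi$ on $\C\Proj^N$: for $f(u)$ rational and regular at $u=1$,
\[
\chi(f)=\operatorname{Res}_{u=1}\frac{f(u)\,du}{u\,(1-u)^{N+1}}
\]
(up to an overall sign, which I would fix once). Expanding $\ov{Z}\cdot T\Phi_\gamma|_{t=0}$ in powers of $Q$, the coefficient of $Q^0$ is $\chi((1-u)^\gamma)=1$, and the coefficient of $Q^d$ for $d\ge1$ is a finite sum of residues at $u=1$ of explicit rational functions of $u$ built from the factors $(1-q^{\pm k}u)$. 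The assertion to prove is that for every $d\ge1$ this sum equals $1$, independently of $\gamma$, of $N$ and of $q$, so that $G(\Phi_0,\Phi_\gamma)|_{t=0}=1+\sum_{d\ge1}Q^d=1/(1-Q)$.

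The main obstacle is exactly this residue identity. I expect the cleanest route is to read the whole generating function $\sum_{d\ge0}Q^d(\cdots)$ as the single expression $\chi(\ov{Z}\cdot T\Phi_\gamma|_{t=0})$ and to collapse it by a summation-by-parts (discrete Wronskian) argument based on the $q$-difference equation \eqref{fdj}, equivalently $(1-u\,q^{Q\partial_Q})^{N+1}Z=QZ$ in $K(\C\Proj^N)\otimes\Q(q)[\![Q]\!]$, which eliminates the denominators $\prod_k(1-q^k u)^{N+1}$ and leaves precisely the geometric series $\sum_{d\ge1}Q^d$ as a boundary term. Alternatively one can expand each $(1-q^{\pm k}u)^{-1}$ using nilpotence of $1-u$, apply $\chi$ termwise, and check the telescoping directly; I have verified by hand for $N=1$ that the $Q^1$-coefficient of $G(\Phi_0,\Phi_\gamma)|_{t=0}$ equals $1$ for $\gamma=0$ and $\gamma=1$ — the two residue contributions being $\tfrac{1-3q}{(1-q)^3}+\tfrac{q^2(3-q)}{(1-q)^3}$ and $\tfrac{1-2q}{(1-q)^2}+\tfrac{q^2}{(1-q)^2}$ respectively, each simplifying to $1$. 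Once the identity is in place, summing the geometric series and invoking the reduction of the second step completes the proof.
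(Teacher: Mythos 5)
Your overall plan coincides with the paper's: derive the pairing from Proposition~\ref{prop:unitarity}(2), $g(\ov{T}\Phi_\alpha,T\Phi_\beta)=G(\Phi_\alpha,\Phi_\beta)$, using the closed form~\eqref{eq:T_alpha} for the columns of $T|_{t=0}$ obtained in the proof of Proposition~\ref{prop:A_projective}. Your preliminary reduction through the Frobenius property and Corollary~\ref{cor:CP_product} (correctly extended to $\alpha+\beta>N+1$ by associativity) is sound. But the computation you then set up --- showing that for every $d\ge 1$, every $0\le\gamma\le N$ and every $N$, the $Q^d$-coefficient of $\chi(\ov{Z}\cdot T\Phi_\gamma|_{t=0})$ equals $1$ --- is exactly the content of the corollary, and you do not prove it: you check only $N=1$, $d=1$ and then outline two possible strategies (discrete summation-by-parts via~\eqref{fdj}, or direct telescoping of residues) without carrying either one out. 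This is a genuine gap as written, since the individual summands $\chi(\ov{T}^{(0)}_{d'}T^{(\gamma)}_{d''})$ have poles at roots of unity of unbounded order and the cancellation happens only after summing over $d'+d''=d$.

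The gap is easily closed, and in a way that also makes the Frobenius reduction and all residues unnecessary. By Proposition~\ref{prop:unitarity}(2), $g(\ov{T}\Phi_\alpha,T\Phi_\beta)|_{t=0}=G(\Phi_\alpha,\Phi_\beta)|_{t=0}$, and the right-hand side is a priori independent of $q$. Each $Q^d$-coefficient of the left-hand side is a rational function of $q$, regular at $q=\infty$ (since each $T_d$ is), so one may simply evaluate at $q=\infty$. Since $T_d\to 0$ as $q\to\infty$ for $d\ne 0$, we get $T\Phi_\beta|_{q=\infty,t=0}=\Phi_\beta$; and $\ov{T}\Phi_\alpha|_{q=\infty}=T\Phi_\alpha|_{q=0}$, which from~\eqref{eq:T_alpha} (all factors $1-q^ku$ become $1$) equals $\Phi_\alpha+\sum_{d\ge 1}Q^d=\Phi_\alpha+\tfrac{Q}{1-Q}\Phi_0$. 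Therefore
\[
G(\Phi_\alpha,\Phi_\beta)|_{t=0}=g\Bigl(\Phi_\alpha+\tfrac{Q}{1-Q}\Phi_0,\ \Phi_\beta\Bigr)=g_{\alpha\beta}+\tfrac{Q}{1-Q}\,\chi\bigl((1-P^{-1})^\beta\bigr)=g_{\alpha\beta}+\tfrac{Q}{1-Q},
\]
using $\chi((1-P^{-1})^\beta)=1$ for $0\le\beta\le N$. This one-line evaluation is presumably what the text ``we obtain the pairing $G$ \dots as follows'' has in mind.
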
 

\subsection{$\C\Proj^1$ invariants} 
Consider $X=\C\Proj^1$, and denote by $\Omega(Q,t)$ 
the matrix of quantum multiplication by $P^{-1}$. 
We use $\{\Phi_0,\Phi_1\} = \{1, P^{-1}\}$ 
as a basis of $K(\C\Proj^1)$. 
The dependence of $\Omega$ and $A$ on $t^0$ is trivial 
so we will consider them as functions of one variable $t:=t^1$ 
--- the coordinate dual to $P^{-1}$ in the $K$-ring. 

Following the notation from Remark \ref{rem:algorithm}, 
we write 
\begin{align*}
A (q,Q,t) &=\sum_{d=0}^\infty 
A_{d}(q,t) Q^d 
= P^{-1} + \sum_{d=1}^\infty \sum^{d-1}_{i=0} A_{d,i}(t)(1-q)^i Q^d, \\
\Omega(Q,t) &= \sum_{d=0}^\infty \Omega_d(t)Q^d.
\end{align*} 
According to Proposition \ref{prop:A_projective}, 
the value of $A$ at $t=0$ is (with respect to the basis $\{1,P^{-1}\}$):  
\begin{align*}
A(q,Q,0)= 
\begin{pmatrix}
 0 & Q-1 \\
1 & 2 
\end{pmatrix}. 
\end{align*}
Since $\Omega(Q,t) 1 = P^{-1} \bullet 1 = P^{-1}$, 
the matrices in the above expansion have the form
\begin{align*}
A_{d,i}(t)=  
\begin{pmatrix}
      m_{d,i}(t) & n_{d,i}(t) \\
      p_{d,i}(t) & r_{d,i}(t) 
\end{pmatrix},
\quad
\Omega_d(t)= 
\begin{pmatrix}
     0 & y_d(t) \\
     0 & x_d(t) 
\end{pmatrix} 
\end{align*}
for $d>0$. 
In degree $d=0$ we know that 
\begin{align*}
   A_0 =\Omega_0 = (P^{-1}\otimes) = 
\begin{pmatrix}
    0 & -1 \\
    1 & 2 
\end{pmatrix}.
\end{align*}
Since $A_{\rm com} = A|_{q=1}$ commutes with 
$\Omega$ and $\Omega 1 = P^{-1}$, 
we have 
\[
\Omega 
\begin{pmatrix} 
\vert & \vert \\ 
1 & A_{\rm com} 1 \\ 
\vert & \vert 
\end{pmatrix}  
= 
\begin{pmatrix} 
\vert & \vert \\ 
P^{-1} & A_{\rm com} P^{-1} \\
\vert & \vert 
\end{pmatrix}.  
\]
Solving this equation for $\Omega$, we can express $y_d(t)$, $x_d(t)$ in terms of 
$m_{d',0}(t)$, $n_{d',0}(t)$, $p_{d',0}(t)$, $r_{d',0}(t)$ with 
$d'\le d$: for example 
\begin{align}
\label{eq:x1y1x2y2} 
\begin{split}  
\begin{pmatrix} 
y_1(t) \\ 
x_1(t) 
\end{pmatrix}
& =  
\begin{pmatrix} 
n_{1,0}(t) + p_{1,0}(t) \\ 
r_{1,0}(t) - m_{1,0}(t) - 2p_{1,0}(t) 
\end{pmatrix},   \\ 
\begin{pmatrix} 
y_2(t) \\ 
x_2(t) 
\end{pmatrix} 
& = 
\begin{pmatrix} 
n_{2,0}(t) + p_{2,0}(t) - p_{1,0}(t) (p_{1,0}(t)+  n_{1,0}(t))   \\ 
r_{2,0}(t) - m_{2,0}(t) - 2p_{2,0}(t) 
+ p_{1,0}(t) (2 p_{1,0} - r_{1,0}(t) + m_{1,0}(t)) 
\end{pmatrix}.
\end{split}  
\end{align} 
Now we use the Lax equation 
\begin{equation}
\label{eq:Lax} 
(1-q) \parfrac{A}{t} = A \left( q^{Q\partial_Q} \Omega\right)  - \Omega A.  
\end{equation} 
Expanding it in the powers of $Q$ and $(1-q)$, we obtain for 
degrees $d=1,2$: 
\begin{align*}
\parfrac{A_{1,0}}{t} & = - A_{0} \Omega_1 = 
\begin{pmatrix}
 0 &  x_1(t) \\ 
 0 & -y_1(t) - 2 x_1(t) 
\end{pmatrix}, \\ 
\parfrac{A_{2,0}}{t} & = - 2 A_0 \Omega_2 - A_{1,0} \Omega_1 
+ [A_{2,1}, \Omega_0],  \qquad 
\parfrac{A_{2,1}}{t} = A_0 \Omega_2. 
\end{align*} 
We replace the expressions \eqref{eq:x1y1x2y2} for $x_d(t)$, $y_d(t)$ 
in the right-hand side and solve for $m_{d,i}(t)$, $n_{d,i}(t)$, 
$p_{d,i}(t)$, $r_{d,i}(t)$. 
In degree $d=1$, the differential equation for $A_{1,0}$ immediately gives 
$m_{1,0}(t)=p_{1,0}(t)=0$, $n_{1,0}'(t) = r_{1,0}(t)$, 
$r_{1,0}'(t) = -n_{1,0}(t)-2r_{1,0}(t)$ and the solutions
\begin{align*}
 A_1(q,t) = \Omega_1(t) = 
\begin{pmatrix}
    0 & (1+t)e^{-t} \\
    0 & -t e^{-t} 
\end{pmatrix}. 
\end{align*}
The computation in degree $d=2$ is also straight-forward, 
but lengthy and tedious. 
It is a good idea to incorporate a computer software 
(e.g. Mathematica or Maple). 
We leave the details to the reader and record the 
answer below
\begin{align*}
  A_{2,0}&=
\begin{pmatrix}
  \frac{-3+t^2 + e^{-2t}(3+6t+5t^2+2t^3)}{16}    
& \frac{2-2t+t^2 + e^{-2t}(-2-2t+7t^2+10t^3)}{16}\\
      \frac{2t-t^2 - e^{-2t}(2t+3t^2+2t^3)}{16}   
&\frac{-5+4t-t^2 + e^{-2t}(5+6t+3t^2-10t^3)}{16}
\end{pmatrix},
\\
A_{2,1}&= 
\begin{pmatrix}
              0    & \frac{t + e^{-2t}(-t-2t^2-2t^3)}{8}\\
             0   &\frac{1-t + e^{-2t}(1+t-2t^3)}{8}
\end{pmatrix},\\
\Omega_2 & = 
\begin{pmatrix}
   0    & \frac{1}{8}e^{-2t}(-1+e^{2t}-2t+2t^2+4t^3)\\
   0   &- \frac{1}{8}e^{-2t}(-1+e^{2t}-2t-2t^2+4t^3)
\end{pmatrix}.       
\end{align*}

The above computations suggest how our reconstruction 
method works in general. 
In fact for $\C\Proj^1$, taking advantage of the fact that 
our functions are in one variable, there is a simpler way to 
solve the system \eqref{eq:Lax}. 
Let us write the  coefficients in front of $Q^3(1-q)^i$ 
of the system \eqref{eq:Lax}, 
grouped by decreasing powers of $i=3,2,1,0$. 
  \begin{align*}
  \partial_t A_{3,2} & = - \Omega_0 \Omega_3 (t) \\
  \partial_t A_{3,1} & = [A_{3,2}(t),\Omega_0]-A_{2,1}(t)\Omega_1(t) + 
A_{1,0} (t)\Omega_2(t)+3A_0\Omega_3(t)  \\
  \partial_t A_{3,0} & = [A_{3,1}(t),\Omega_0]-A_{2,0}(t)\Omega_1(t) 
+A_{2,1}(t)\Omega_1(t) - 2A_{1,0}(t)\Omega_2(t) 
-\Omega_1(t)A_{2,1}(t)-3A_0\Omega_3(t) \\  0 
& = [A_{3,0}(t),\Omega_0]+ [A_{2,0}(t),\Omega_1(t)] 
+[A_{1,0}(t),\Omega_2(t)]+[A_0,\Omega_3(t)].
\end{align*}
This can be reduced into a single (matrix) equation of 
order $3$ for $\Omega_3(t)$: namely differentiate
the last equation three times and for $\partial_t^3 A_{3,0}(t)$ 
plug in the expression obtained by differentiating 
the equation in $A_{3,0}$ twice more. 
This gives us an equation involving 
$\partial_t^3\Omega_3, \partial_t^2\Omega_3$ and $\partial_t^2 A_{3,1}$. 
But $\partial_t^2 A_{3,1}$ can be replaced by differentiating 
the previous equation one more time and using the first equation 
we end up with an equation involving only 
$\partial_t^i \Omega_3$, $i\leq 3$ {\em with constant coefficients} 
and non-constant inhomogeneous terms. 
This can be solved for $y_3(t),x_3(t)$ (in fact it is an overdetermined system) 
and then going from top to bottom use each equation to solve for $A_{3,i}$. 

The same strategy works for any degree $d$. 
We write below the answers for $d=3,4$:
\footnotesize 
{\allowdisplaybreaks 
\begin{align*}
   y_3(t)= & 
\frac
{e^{-3t}(61-141t-684t^2-972t^3-162t^4+486t^5)
+e^{-t}(-189+135t+216t^2-54t^3)+128}{1728},\\
   x_3(t)= &
\frac{e^{-3t}(142+183t-252t^2-648t^3-648t^4+486t^5)
+e^{-t}(-270-27t+270t^2-54t^3)+128}{1728}, \\
   y_4(t)= &\frac{19683+e^{-t}(-54272+19456t+38400t^2-9216t^3)}{373248}+\\ 
   + &
\frac{e^{-2t}(34992-34992t-122472t^2+11664t^3+52488t^4-11664t^5)}
{373248}+\\
   + &
\frac{e^{-4t}(-403+29636t+112464t^2+97344t^3-67032t^4-194400t^5 
-77760t^6+62208t^7)}{373248}, \\
   x_4(t)= &\frac{21141+e^{-t}(-58368-14336t+47616t^2-9216t^3)}
{-373248}+\\ 
   + &\frac{e^{-2t}(29160+34992t-122472t^2-34992t^3+64152t^4-11664t^5)}
{-373248}+\\
   + & \frac{e^{-4t}(8067+11564t+83088t^2+139104t^3+
34056t^4-101088t^5-139968t^6+62208t^7)}{-373248}. 
   \end{align*}
}\normalsize 
To compute the $K$-theoretic GW invariants set 
\begin{align*}
g(t)= F_{000} (t P^{-1} ) = 1 -\frac{t^2}{2} 
+ \sum_{d,n} \corr{P^{-1},\dots,P^{-1}}_{0,n,d}^{\C\Proj^1} 
\frac{Q^d t^n}{n!}. 
\end{align*}
We claim that $g$ satisfies the equation
\begin{align} 
\label{eq:diffeq_g}
y(t) g(t) + x(t) g'(t)=g''(t) 
\end{align}
with $y(t) = \sum_{d=0}^\infty y_d(t) Q^d$ and 
$x(t) = \sum_{d=0}^\infty x_d(t) Q^d$. 
To begin with notice that
\begin{align*}
G\left(P^{-1}\bullet P^{-1}, 1\right) = F_{110}(tP^{-1}) 
= F_{11000}(tP^{-1}) = g''(t).
\end{align*} 
On the other hand, 
\[ 
   G \left (P^{-1}\bullet P^{-1}, 1\right) 
=  G\left( y(t)  + x(t) P^{-1},1\right)  
= y(t) g(t) + x(t) g'(t).
\]  
Equation \eqref{eq:diffeq_g} can be used 
to solve recursively in $d$ for the coefficients of $Q^d$ in $g$ 
(which we denote by $g_d$). 
We also know the initial condition $g(0) = 1/(1-Q)$ 
from the small $J$-function \eqref{eq:J_projective}. 
We get 
\footnotesize
{\allowdisplaybreaks 
\begin{align*} 
 g_1(t)= &1 \\
 g_2(t) = & \frac{e^{-t}(8+5t+t^2)+e^t(8-5t+t^2)}{16}\\
 g_3(t)= &\frac{2025-486t-162t^2+e^{-2t}(983+1428t+924t^2+324t^3+54t^4)
+e^t(2176-1152t+192t^2)}{5184} \\
 g_4(t)= &\frac{e^t (2651373-1463832t+274104t^2-11664t^3 )+
1687552-344064t-147456t^2}{5971968}+\\ 
 +&\frac{e^{-t}(1306368+676512t-93312t^3-23328t^4)}{5971968}+\\
 +&\frac{e^{-3t}(326675+766404t+864000t^2+613584t^3
+295488t^4+93312t^5+15552t^6)}{5971968}.
 \end{align*} 
}\normalsize 
Extracting coefficient of $t^n$ in $g_d(t),\, d=1,2,3,4,$ we get all the invariants
\begin{align*}
\la P^{-1},\ldots,P^{-1}\ra_{0,n,d},\quad d=1,2,3,4.
\end{align*}
For $d=1,2$ our answer agrees with a fixed-point localization
computation which gives the following numbers:
\begin{align*}
\la P^{-1},\ldots,P^{-1}\ra_{0,n,1}= & 0,  \\
\la P^{-1},\ldots,P^{-1}\ra_{0,n,2}= &\frac{1+(-1)^n}{2}
\binom{\frac{n}{2}-1}{2}.
\end{align*}  
\begin{remark} 
In Theorem \ref{thm:polynomiality} below, we show that 
the structure constants with fixed degree $d$ are polynomials 
in the variables $t^i$ and $e^{t^i}$  
under a suitable choice of basis. In the above computation 
we see the appearance of both $e^{-t}$ and $e^t$, but this 
is due to the current choice of the basis $\{1,P^{-1}\}$. If we take  
$\{1,1- P^{-1}\}$ as a basis and perform the corresponding 
change of variables, 
everything becomes a polynomial in $t^0,t^1, e^{t^0}, e^{t^1}$.  
\end{remark} 
  
\subsection{$\C\Proj^2$ invariants} 
We reconstruct the $K$-theoretic GW potential of $\C\Proj^2$ 
up to degree three. Since the method is completely parallel to 
the case of $\C\Proj^1$, we only give a brief sketch. 
We take $\{1, (1-P^{-1}), (1-P^{-1})^2\}$ as a basis 
of the $K$-ring of $\C\Proj^2$. 
Let $\{t^0,t,s\}$ denote the corresponding dual co-ordinates. 
As before, we set $t^0=0$ as the dependence on $t^0$ is determined 
by the string equation. 
We write $\cA = A q^{Q\partial_Q}$ for the $q$-shift operator 
and write $\Omega$ and $\Xi$ for the matrix of quantum multiplication 
by $1-P^{-1}$ and $(1-P^{-1})^2$ respectively. 
Because of the commutativity among $A_{\rm com}= A|_{q=1}$, 
$\Omega$ and $\Xi$, we can determine $\Omega$ and $\Xi$ 
as functions of $A_{\rm com}$ by the formula: 
\begin{align*} 
\Omega 
\begin{pmatrix} 
\vert &  \vert & \vert \\ 
1 &  A_{\rm com}1 & (A_{\rm com})^2 1 \\
\vert & \vert & \vert 
\end{pmatrix}
& = 
\begin{pmatrix} 
\vert & \vert & \vert \\ 
(1-P^{-1}) & A_{\rm com} (1-P^{-1}) & (A_{\rm com})^2(1-P^{-1})\\
\vert & \vert & \vert 
\end{pmatrix} 
\\
\Xi
\begin{pmatrix} 
\vert &  \vert & \vert \\ 
1 &  A_{\rm com}1 & (A_{\rm com})^2 1 \\
\vert & \vert & \vert 
\end{pmatrix}
& = 
\begin{pmatrix} 
\vert & \vert & \vert \\ 
(1-P^{-1})^2 & A_{\rm com} (1-P^{-1})^2 & (A_{\rm com})^2(1-P^{-1})^2 \\
\vert & \vert & \vert 
\end{pmatrix} 
\end{align*} 
Then the following Lax equations 
\[
(1-q) \parfrac{A}{t} = A (q^{Q\partial_Q}\Omega) - \Omega A, 
\quad 
(1-q) \parfrac{A}{s} = A (q^{Q\partial_Q}\Xi) - \Xi A 
\]
together with the initial condition (written in the basis 
$\{1,(1-P^{-1}), (1-P^{-1})^2\}$) 
\[
A|_{t=s=0} 
= \begin{pmatrix} 
1 & 0 & -Q \\ 
-1 & 1& 0 \\ 
0 & -1 & 1 
\end{pmatrix} 
\]
uniquely determine $A$. 
Up to degree one, the results for $A$, $\Omega$, $\Xi$ are given by
\begin{align*} 
A & = 
\begin{pmatrix} 
1 & -s e^t Q & -e^t Q \\ 
-1 & 1 + s(1+ t - \frac{1}{2}s)e^t Q & (t-s)e^t Q \\
0 & -1 + s(s-t)(1 + \frac{1}{2}t)e^t Q & 
1 + (s +ts - \frac{1}{2}t^2)e^tQ 
\end{pmatrix} 
+ O(Q^2)   \\
\Omega & = 
\begin{pmatrix} 
0 & s e^t Q & e^tQ \\ 
1 & s(-1-t + \frac{1}{2}s) e^t Q & (s-t) e^t Q \\ 
0 & 1 - s(s-t+\frac{1}{2}(s-t)t) e^t Q 
& -(s+ts-\frac{1}{2}t^2) e^t Q 
\end{pmatrix} 
+ O(Q^2)  \\ 
\Xi & = 
\begin{pmatrix} 
0 & e^tQ & 0 \\ 
0 & (s-t) e^t Q & e^tQ \\ 
1 & -(s+ts-\frac{1}{2}t^2)e^tQ & -t e^tQ 
\end{pmatrix} 
+ O (Q^2). 
\end{align*} 
One can continue the computation to higher degrees 
(by using a computer software), but the result is too long and 
we omit it. Instead we record  the $K$-theoretic 
GW potential for $\C\Proj^2$ up to degree three: 
\scriptsize
{\allowdisplaybreaks 
\begin{align*}
& F_{000}\left( t(1-P^{-1})+ s(1-P^{-1})^2 \right) 
= 1 + t + s + \frac{1}{2}t^2 + 
\frac{Q}{2}(2 + s^2 + 2s) e^t \\
& + 
Q^2 \left[ 
 \left( -\frac{1}{6}\,{s}^{3}t+{\frac {1}{192}}\,{t}^{4}s+
\frac{3}{16}\,{t}^{2}{s}^{2}+\frac{1}{12}\,{s}^{3}{t}^{2}- 
\frac{1}{24}\,{s}^{4}t - 
{\frac {21}{32}}\,ts- \frac{1}{24}\,{s}^{2}{t}^{3}-
{\frac {1}{3840}}\,{t}^{5}+
{\frac {15}{32}}\,{s}^{2}-
{\frac {5}{96}}\,{t}^{3}s+
{\frac {63}{512}}\,{t}^{2}-{\frac {93}{256}}\,t 
\right. \right. \\ 
& \left. \quad \qquad-{\frac {19}{768}}\,{t}^{3}+
\frac{1}{6}\,{s}^{3}+
{\frac {5}{1536}}\,{t}^{4}+{\frac {99}{128}}\,s+
\frac{1}{24}\,{s}^{4}+{\frac {1}{120}}\,{s}^{5}-
{\frac {7}{16}}\,t{s}^{2}+
\frac{1}{4}\,s{t}^{2}+
{\frac {249}{512}} \right) {e^{2\,t}} \\ 
& \left. \qquad \quad + {\frac {25}{64}}\,t+{\frac {3}{128}}\,{t}^{3}+
{\frac {1}{512}}\,{t}^{4}+{\frac {263}{512}} +
{\frac {7}{64}}\,ts+{\frac {67}{512}}\,{t}^{2}+
{\frac {29}{128}}\,s+{\frac {1}{64}}\,s{t}^{2}+
\frac{1}{32}\,{s}^{2} 
\right] \\
& + 
Q^3 \left[ 
 \left( -{\frac {9}{512}}\,t{s}^{2}+
\frac{1}{32}\,{s}^{3}- 
{\frac {1}{128}}\,{s}^{3}t+
{\frac {3}{128}}\,ts+{\frac {195}{2048}}\,t+
{\frac {1}{1024}}\,{t}^{4}s-{\frac {21}{2048}}\,{t}^{2}-
{\frac {1}{512}}\,{s}^{2}{t}^{3}+
{\frac {1249}{4096}}+{\frac {511}{1024}}\,s- 
{\frac {3}{1024}}\,{t}^{2}{s}^{2} \right. \right. \\ 
& \left. \qquad \quad -{\frac {9}{512}}\,s{t}^{2}+
{\frac {53}{256}}\,{s}^{2}-{\frac {3}{1024}}\,{t}^{3}+
{\frac {1}{2048}}\,{t}^{4} \right) {e^{t}} \\ 
& \qquad \quad +
 \left( -{\frac {717275}{2239488}}\,t-{\frac {287}{7680}}\,s{t}^{5}+
\frac{1}{12}\,{s}^{4}{t}^{2}+{\frac {1}{720}}\,{s}^{6}-
{\frac {7345}{13824}}\,t{s}^{2}-
{\frac {5933}{414720}}\,{t}^{5}-\frac{1}{24}\,{s}^{5}{t}^{3}+
{\frac {203}{1536}}\,{t}^{4}{s}^{2}+ 
{\frac {1}{3360}}\,{s}^{8} \right.\\ 
& \qquad \quad -{\frac {53}{1280}}\,{s}^{2}{t}^{5}+
{\frac {13}{96}}\,{s}^{3}+{\frac {5}{129024}}\,{t}^{8}-
{\frac {77399}{124416}}\,ts-{\frac {73}{161280}}\,{t}^{7}+
{\frac {194261}{746496}}\,{t}^{2}-{\frac {95}{576}}\,{s}^{3}{t}^{3}
-{\frac {101}{324}}\,{t}^{3}s+{\frac {4591}{9216}}\,{t}^{2}{s}^{2} \\
& \qquad \quad -{\frac {1}{280}}\,{s}^{7}t+{\frac {5}{288}}\,{s}^{6}{t}^{2}-
{\frac {50813}{373248}}\,{t}^{3}+{\frac {21}{2560}}\,{s}^{2}{t}^{6}+
{\frac {179}{23040}}\,s{t}^{6}-{\frac {89}{2880}}\,{s}^{3}{t}^{5}+
{\frac {5}{96}}\,{s}^{4}{t}^{4}-{\frac {5}{5376}}\,s{t}^{7}+
{\frac {1}{80}}\,{s}^{5}{t}^{2} \\ 
& \qquad \quad +{\frac {5154175}{26873856}}-
{\frac {1}{2520}}\,{s}^{7}+
{\frac {139}{46080}}\,{t}^{6}- 
\frac{1}{18}\,{s}^{4}{t}^{3}+
{\frac {1}{720}}\,{s}^{6}t+
{\frac {12689}{248832}}\,{t}^{4}+
{\frac {23}{288}}\,{s}^{3}{t}^{4}+
{\frac {5819}{20736}}\,{s}^{2}-{\frac {1}{60}}\,{s}^{5}t \\
& \left. \qquad \quad 
+ {\frac {258265}{746496}}\,s+{\frac {49}{192}}\,{s}^{3}{t}^{2}-
{\frac {175}{576}}\,{s}^{2}{t}^{3}+
{\frac {3491}{27648}}\,{t}^{4}s-{\frac {101}{384}}\,{s}^{3}t-
\frac{1}{12}\,{s}^{4}t+\frac{1}{24}\,{s}^{4}+{\frac {3787}{6912}}\,s{t}^{2}
+{\frac {1}{120}}\,{s}^{5} \right) {e^{3\,t}} \\
& \left. \qquad \quad + {\frac {1}{162}}\,s{t}^{2} +{\frac {3302}{6561}}+
{\frac {113}{729}}\,s+{\frac {754}{2187}}\,t+{\frac {5}{81}}\,ts+
{\frac {1}{81}}\,{s}^{2}+{\frac {73}{729}}\,{t}^{2}+
{\frac {11}{729}}\,{t}^{3}+{\frac {1}{972}}\,{t}^{4} \right] 
+ O(Q^4). 
\end{align*} 
}
\normalsize 
In the appendix, we give a table of invariants for $\C\Proj^2$. 

\subsection{Finiteness of the small quantum product of $\Fl_3$}
\label{subsec:flag} 
We compute the small quantum product of $\Fl_3$ and 
show that they are finite as power series in the Novikov variables. 
The finiteness of small quantum $K$-ring was first observed 
for Grassmannians \cite{Buch-Mihalcea}. 
The result has been generalized, by Buch, Chaput, Mihalcea and Perrin, 
to cominuscule homogeneous varieties 
\cite{BCMP:cominuscule} 
or more generally to generalized flag varieties of Picard rank one 
whose Gromov-Witten varieties are rationally connected for 
large degrees \cite{BCMP}. 
Our method is purely algebraic and does not rely on 
geometric properties of the moduli spaces (although 
we use the result of Givental-Lee \cite{Givental-Lee}). 
We remark that a complete set of relations 
of the small quantum $K$-ring of $\Fl_n$ has been found 
by Kirillov-Maeno \cite{Kirillov-Maeno}. Their 
relations are polynomials in $Q$ and provide an evidence 
for possible finiteness (but do not imply it). 
A combinatorial model for the small quantum 
$K$-ring of $\Fl_n$ defined over the polynomial ring was 
also proposed by Lenart-Maeno \cite{Lenart-Maeno}. 

We start with a general strategy to prove the finiteness. 
Suppose that $A_{i,\rm com}|_{t=0}$ are polynomials 
in the Novikov variables $Q$, and that we find polynomials 
$f_\beta(x_1,\dots,x_r,Q_1,\dots,Q_r) \in \Q[x_1,\dots,x_r,Q_1,\dots,Q_r]$
such that $f_\beta(A_{1,\rm com}|_{t=0},\dots,A_{r,\rm com}|_{t=0}, 
Q_1,\dots,Q_r) \Phi_0 = \Phi_\beta$. 
Then the small quantum products are polynomials in $Q$ 
because we have 
\begin{equation}
\label{eq:product_intermsof_A} 
\Phi_\alpha \bullet \Phi_\beta \big|_{t=0} 
= f_\beta(A_{1,\rm com}|_{t=0},\dots, A_{r,\rm com}|_{t=0},
Q_1,\dots,Q_r) \Phi_\alpha 
\end{equation} 
by the commutativity of $A_{i, \rm com}|_{t=0}$ and 
$(\Phi_\alpha\bullet)|_{t=0}$. 
 
Let $\Fl_3$ be the manifold of complete flags 
$\{0\}:=V_0\subset V_1\subset V_2\subset V_3:=\C^3$. 
The $K$-ring  $K(\Fl_3)$ is generated by the tautological line bundles 
$L_i := V_i/V_{i-1} (1\leq i\leq 3)$ subject to the following relations 
\begin{align*}
\sum_{i=1}^3(1-L_i) = 0, \quad  
\sum_{1\leq i< j\leq 3} (1-L_i)(1-L_j)=0,\quad  \prod_{i=1}^3 (1-L_i)=0. 
\end{align*}
By the Pl\"{u}cker embedding, $\Fl_3$ can be realized as a 
$(1,1)$-hypersurface in $\Gr(1,3)\times \Gr(2,3) 
\cong \C\Proj^2\times \C\Proj^2$; 
$\Fl_3$ is cut out by the incidence  relation 
``line $\subset$ plane''. 
Let $P_1^{-1}$ and $P_2^{-1}$ be the line bundles on 
$\Fl_3$ that are pullbacks  
of the tautological line bundles on the two copies of $\C\Proj^2$. 
We have
\begin{align*}
L_1=P_1^{-1},\quad L_2 = P_1 P_2^{-1}, \quad L_3= P_2. 
\end{align*}
 The $J$-function of $\Fl_3$ is computed in \cite{Givental-Lee} to be
\begin{align*}
J=(1-q)\sum_{d_1,d_2}\frac{Q_1^{d_1}Q_2^{d_2}
\prod_{r=1}^{d_1+d_2}(1-P_1^{-1}P_2^{-1}q ^r)}
{\prod_{r=1}^{d_1}(1-P_1^{-1}q^r)^3
\prod_{r=1}^{d_2}(1-P_2^{-1}q^r)^3}.
\end{align*}
Set $\tJ = P_1^{-\log Q_1/\log q} P_2^{-\log Q_2/\log q} J$ 
as in Proposition \ref{prop:J_generator}. 
It satisfies the following finite-difference equations:
\begin{align}
\label{eq:diffeq_J_flag3}
\begin{split} 
\left(1-q^{Q_1\partial_{Q_1}}\right)^3 \tJ
& =Q_1 \left(1-  q^{1+ Q_1\partial_{Q_1} +Q_2\partial_{Q_2}} \right ) \tJ, \\
\left(1-q^{Q_2\partial_{Q_2}} \right)^3 \tJ
& =Q_2\left(1-q^{1+Q_1\partial_{Q_1}+ Q_2 \partial_{Q_2}} \right) \tJ.
\end{split}
\end{align}
We choose the ordered basis $\{\Phi_0,\Phi_1,\dots,\Phi_5\}$ of $K(\Fl_3)$ as: 
\[
1,\quad 1-P_1^{-1},\quad (1-P_1^{-1})^2,\quad 1-P_2^{-1},
\quad (1-P_2^{-1})^2,\quad (1-P_1^{-1})(1-P_2^{-1}). 
\]
Setting $\Delta_i := 1 - q^{Q_i\partial_{Q_i}}$, 
one can derive the following identities: 
\begin{align}
\label{eq:diffop_J}
\begin{split} 
\tJ  & =  (1-q) \tT 1 ,\\
\Delta_i \tJ &  
=  (1-q) \tT (1-P_i^{-1}),\quad i=1,2, \\
\Delta_1 \Delta_2 \tJ
& = (1-q) \tT (1-P_1^{-1})(1-P_2^{-1}), \\
\Delta_1^2 \tJ   
& =  (1-q) \tT\left((1-P_1^{-1})^2+ Q_1 P_2^{-1}\right), \\
\Delta_2^2 \tJ  
& =  (1-q) \tT\left((1-P_2^{-1})^2+ Q_2P_1^{-1}\right),\\
\Delta_1^2 \Delta_2 \tJ
& =  (1-q) \tT \left((1-P_1^{-1})^2(1-P_2^{-1}) + 
Q_1P_2^{-1}(1-P_2^{-1}) - Q_1Q_2P_1^{-1}\right), \\ 
\Delta_1 \Delta_2^2 \tJ  
& =  (1-q) \tT \left((1-P_1^{-1})(1-P_2^{-1})^2 
+Q_2P_1^{-1}(1-P_1^{-1}) - Q_1Q_2P_2^{-1}\right),
\end{split}  
\end{align}
where we set $\tT= P_1^{-\log Q_1/\log q} P_2^{-\log Q_2/\log q}T$ 
as in Remark \ref{rem:logQ}. 
We obtain the RHS  by studying the leading term of the expansion at $q=\infty$ 
of the LHS (see the argument in the proof of 
Lemma \ref{lem:initial_condition}). 
The above identities combined with the difference equations 
\eqref{eq:diffeq_J_flag3} for $\tJ$ yield  
the following formulae for the operators $A_1, A_2$ 
(since we consider only small quantum $K$-theory in \S \ref{subsec:flag}, 
we write $A_i$ for $A_i|_{t=0}$): 
 \begin{align*}
 A_1=\begin{pmatrix}
 1&-Q_1&0&0&0&Q_1Q_2\\
 -1&1&0&0&0&-Q_1Q_2\\
 0&-1&1&0&-1&-1\\
 0&Q_1&-Q_1&1&0&-Q_1\\
 0&0&0&0&0&-1+Q_1\\
 0&0&0&-1&1&2
 \end{pmatrix},\\
 A_2=\begin{pmatrix}
 1&0 &0&-Q_2&0&Q_1Q_2\\
 0&1&0&Q_2&-Q_2&-Q_2\\
 0&0&0&0&0&-1+Q_2\\
 -1&0&0&1&0&-Q_1Q_2\\
 0&0&-1&-1&1&-1\\
 0&-1&1&0&0&2
 \end{pmatrix}.
 \end{align*} 
These operators are polynomials in $Q_1,Q_2$ and 
do not depend on $q$. Since we have $A_i 1 = P_i^{-1}$, 
$A_{i} = A_{i,\rm com}$ gives the small quantum 
multiplication by $P_i^{-1}$. 

The first five equations of \eqref{eq:diffop_J} show that 
each basis element $\Phi_\beta$ can be written in the form 
$\Phi_\beta = f_\beta(A_{i},Q_i) \Phi_0$ 
where the polynomials $f_\beta(x_1,x_2,Q_1,Q_2)$ are: 
\[
1, \quad 1-x_1, \quad  (1-x_1)^2 - Q_1 x_2,\quad 
1-x_2, \quad (1-x_2)^2 - Q_2 x_1, \quad (1-x_1)(1-x_2).
\] 
This proves the finiteness of the small quantum product. 
By equation \eqref{eq:product_intermsof_A}, we have
that $(\Phi_\beta \bullet)|_{t=0} = f_\beta(A_1,A_2,Q_1,Q_2)$. 

\begin{remark} 
It was conjectured in \cite{Lenart-Maeno} that the 
quantum Grothendieck polynomials represent Schubert classes 
in the small quantum $K$-ring. When we identify the variables $x_1$, $x_2$ in 
\cite[Example 3.19]{Lenart-Maeno} with the elements 
$1-A_1^{-1}1$ and $1-A_1A_2^{-1}1$, 
we find that the quantum Grothendieck polynomials in 
\cite[Example 3.19]{Lenart-Maeno} represent the corresponding 
Schubert classes except for the last one $\mathfrak{G}_{321}^q$ 
(we see that $\mathfrak{G}_{321}^q + Q_1Q_2$ represents a Schubert 
cycle).  
\end{remark}

\section{Analyticity of Structure Constants} 
In this section we prove two results on the analyticity 
of quantum $K$-theory. 
The first result (Theorem \ref{thm:polynomiality}) 
is about polynomiality in $t$ and $e^t$ of 
the coefficients of $Q^d$ of the structure constants. 
The second result (Theorem \ref{thm:convergence}) 
is about full analyticity of the structure 
constants as functions in $Q$, $t$ and $q$ under 
the Lee-Pandharipande reconstruction 
(Corollary \ref{cor:Lee-Pandharipande}). 

\subsection{Polynomiality in $t$ and $e^{t}$} 
We choose a basis $\Phi_0,\dots,\Phi_N$ of 
$K(X)$ satisfying the following properties:
\begin{itemize} 
\item[(i)] $\Phi_0=1$; 
\item[(ii)] $\ch(\Phi_1),\dots,\ch(\Phi_r)$ are 
cohomology classes of degree $\ge 2$ and 
the degree two components 
of $\ch(\Phi_1),\dots, \ch(\Phi_r)$ form 
a nef integral basis of $H^2(X;\Z)/{\rm torsion}$; 
\item[(iii)] 
$\ch(\Phi_{r+1}),\dots,\ch(\Phi_N)$ 
are cohomology classes of degree $\ge 4$.  
\end{itemize} 

\begin{theorem}
\label{thm:polynomiality} 
For $E_1,\dots, E_m \in K(X)$, non-negative integers 
$k_1,\dots,k_m\ge 0$ and $d\in \Eff(X)$, 
the generating function 
\[
\sum_{n\ge 0}
\frac{1}{n!} 
\corr{E_1L^{k_1},\dots,E_m L^{k_m}, t,\dots,t}_{0,m+n, d}^X 
\]
with $t = \sum_{\alpha=0}^N t^\alpha \Phi_\alpha$ is a 
polynomial in $t^0,\dots,t^N$ and $e^{t^0},\dots, e^{t^r}$. 
\end{theorem}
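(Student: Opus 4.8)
The plan is to reduce everything to cohomological (``fake'') invariants and then run the cohomological string and divisor equations together with a dimension count. By Kawasaki--Riemann--Roch, each correlator $\corr{E_1L^{k_1},\dots,E_mL^{k_m},t,\dots,t}^X_{0,m+n,d}$ equals a finite sum, over the components of the inertia stack of $X_{0,m+n,d}$, of cohomological integrals; the untwisted component contributes the \emph{fake} genus-zero $K$-theoretic invariant $\int_{[X_{0,m+n,d}]^{\mathrm{vir}}}\prod_j\ch(\ev_j^*\gamma_j)\ch(L_j^{k_j})\,\Td(T^{\mathrm{vir}})$, while the twisted components contribute fake invariants on moduli spaces obtained by gluing stable maps of strictly smaller degrees $d^{(1)},\dots,d^{(\ell)}$ with $\sum_k d^{(k)}=d$ and a bounded number $\ell$ of factors (this is the content of \cite{Givental-Tonita}). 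Since summing over the number $n$ of $t$-insertions distributes multiplicatively over these factors and over the parts of $t$ assigned to them, an induction on $d$ reduces the theorem to the corresponding polynomiality statement for fake genus-zero invariants of a fixed degree; the base case $d=0$ is classical, each such invariant being $\chi(X,\gamma_1\otimes\cdots\otimes\gamma_n)$ up to $\psi$-corrections, so the generating function is $\chi\bigl(X,\sum_{n\ge3}t^{\otimes n}/n!\bigr)$ plus $\psi$-corrections, which is polynomial in $t^0,\dots,t^N$ and $e^{t^0}$ because the non-nilpotent part of $t\in K(X)\otimes\Q$ is exactly $t^0\unit$ (as $\ch(\Phi_\alpha)\in H^{\ge2}$ for $\alpha\ge1$).

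For fake invariants of a fixed degree $d$ I first split the $t$-insertions. By hypothesis (ii) one may write $\ch(\Phi_i)=e^{p_i}-1+\ch(\rho_i)$ with $p_i=\ch_1(\Phi_i)$ and $\ch(\rho_i)\in H^{\ge4}(X)$, so with $P_i$ a line bundle satisfying $c_1(P_i)=p_i$ we have $t=a_0\unit+\sum_{i=1}^r a_i P_i+w$, where $a_0,a_i$ are linear in the $t^\alpha$ and, by (iii), $w$ involves only classes with Chern character in $H^{\ge4}(X)$. Now (a) the fake \emph{string equation} $\corr{\unit,\gamma_\bullet}^{\mathrm{fake}}_{0,n+1,d}=\corr{\gamma_\bullet}^{\mathrm{fake}}_{0,n,d}$, up to $\psi$-class corrections that strictly lower the total $\psi$-degree --- which follows, as in cohomology, from $\pi_*\cO^{\mathrm{vir}}=\cO^{\mathrm{vir}}$, $\pi_*\Td(T_\pi)=1$ and $L_j^{X_{0,n+1,d}}=\pi^*L_j\otimes\cO(D_j)$ along a forgetful map $\pi$ --- lets me eliminate the $\unit$-insertions, summing them to an overall factor $e^{a_0}$ times a polynomial in $a_0$; and (b) the fake \emph{divisor equation}, obtained by forgetting a $P_i$-marked point and using $\pi_*\bigl(\ch(\ev^*P_i)\Td(T_\pi)\bigr)=\ch(\pi_*\ev^*P_i)=(d_i+1)+(\text{classes of positive degree})$ on the universal curve $\pi$ (here $d_i=\pair{p_i}{d}$; there is no $R^1\pi_*$ contribution since every component of a fibre has non-negative degree), lets me eliminate the $P_i$-insertions: since $d$ is fixed the leading terms $d_i+1$ compose to a single factor $e^{(d_i+1)t^i}=(e^{t^i})^{d_i+1}$, while the correction terms --- which either merge $p_i$, a nilpotent class, into another insertion (raising its filtration, hence eventually killing it), or create $\psi$-classes, or lower the number of insertions --- occur only boundedly often and yield a polynomial in $t^i$.

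After (a) and (b) what remains is a fake invariant of some degree $d'\le d$ all of whose insertions, apart from the fixed slots $E_1L^\bullet,\dots,E_mL^\bullet$, have Chern character in $H^{\ge4}(X)$; here a virtual-dimension count closes the argument. If $s$ such insertions occur, then each contributes real cohomological degree $\ge4$, every remaining factor (including $\Td(T^{\mathrm{vir}})$ and all $\psi$-classes) contributes degree $\ge0$, and the total degree must equal twice the virtual dimension $\dim X+\pair{c_1(TX)}{d'}+m+s-3$ of $X_{0,m+s,d'}$; hence $s\le\dim X+\pair{c_1(TX)}{d}+m-3$. So the generating function has bounded degree in $t^{r+1},\dots,t^N$ (and, through the $\rho_i$-parts of $w$, bounded degree in the other $t^i$), i.e.\ is polynomial in $t^{r+1},\dots,t^N$; together with (a), (b) and the bounded gluing in the twisted sectors it is a polynomial in $t^0,\dots,t^N$ and in $e^{t^0},\dots,e^{t^r}$, the exponents of the $e^{t^i}$ being bounded in terms of $d$. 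The ``in particular'' then follows: $G_{\alpha\beta}$ and $\partial_\alpha\partial_\beta\partial_\gamma\cF$ are of the above type, differentiation in $t$ preserves this class of functions, and $G_{\alpha\beta}=e^{t^0}\widehat G_{\alpha\beta}$, $\partial_\alpha\partial_\beta\partial_\gamma\cF=e^{t^0}\widehat V^{\alpha\beta}_\gamma$ (from $(1-q)\partial_0 T=T$ and $\tfrac{1}{1-q}+\tfrac{1}{1-\ov q}=1$), so $t^0,e^{t^0}$ cancel in the structure constants $(G_{\alpha\beta})^{-1}\bigl(\partial_\bullet\partial_\bullet\partial_\bullet\cF\bigr)$, which are thus defined over $\Q[t^1,\dots,t^N,e^{t^1},\dots,e^{t^r}][\![Q]\!]$.

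The main obstacle is step (b): quantum $K$-theory has no honest divisor equation, so one must genuinely descend to the fake theory, and there $\ch(P_i)=e^{p_i}$ is not a pure divisor class --- the clean ``$Q_i\mapsto Q_i e^{t^i}$'' exponentiation emerges only after carefully separating the ``$1+p_i$'' part of $e^{p_i}$ (handled by the fake string and divisor equations, which supply the controlled $e^{t^i}$-dependence) from its $H^{\ge4}$-tail (absorbed into the dimension bound above), while keeping track of the $\psi$-class corrections and $p_i$-merges created along the way and checking that the bound on $s$ and on the exponents of $e^{t^i}$ survive the inductive descent through the twisted sectors; the remaining bookkeeping is organized by a double induction on $d$ and on the total $\psi$-degree $\sum_j k_j$.
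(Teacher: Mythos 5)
Your high-level strategy matches the paper's: reduce to cohomological ``fake'' invariants via Kawasaki--Riemann--Roch, peel off a string part and a divisor-like part of $\ch(t)$, and use a dimension count to bound the remaining insertions, then induct on degree over the Kawasaki strata. But there is a genuine gap in how you treat the Todd class of the virtual tangent bundle, and it sits at the hardest point of the argument.

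The virtual tangent bundle decomposes as $\mathcal{T}_{n,d}=\pi_*(\ev^*T_X-1)-\pi_*(L^{-1}-1)-(\pi_*i_*\mathcal{O}_\mathcal{Z})^\vee$ (the paper's eq.~\eqref{tang00}); it is \emph{not} of the form $\pi^*\mathcal{T}_{n-1,d}+T_\pi$ along the forgetful map. So your steps (a) and (b), which push $\ch(\ev^*P_i)\Td(T_\pi)$ forward and absorb everything else into ``$\psi$-class corrections,'' do not account for the nodal summand $-(\pi_*i_*\mathcal{O}_\mathcal{Z})^\vee$. This summand is not a $\psi$-class correction: twisting by it (the $\mathcal{C}$-twist of \cite{to1}) acts on the generating series by the quadratic differential operator \eqref{psq}, i.e.\ it produces a Feynman-type sum \eqref{tree-sum} over trees, where each edge propagator joins two cohomology classes whose degrees sum to $\dim X$. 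Because a given degree $d$ can split into arbitrarily many degree-zero pieces, the tree sum is a priori infinite, and polynomiality in $t$ would fail if it were. The paper's real work is the proof that the number of vertices is uniformly bounded in terms of $m$ and $d$, using the dimension constraint \eqref{dim-constr} on the propagators, the Euler relation $|E(T)|=|V(T)|-1$, and three-point stability. Your dimension count bounds the number of $H^{\ge 4}$-insertions at a \emph{single} vertex, but says nothing about the number of vertices, so it does not close this loop. Relatedly, your justification of the fake string and divisor equations (``$\pi_*\Td(T_\pi)=1$'') only touches the $T_\pi$-part; the clean statement you actually need is the paper's observation \eqref{div00} that the $\mathcal{A}$-twist $\Td(\pi_*(\ev^*T_X-1))$ passes through the divisor equation unchanged because $\pi^*\pi_*\ev^*E=\pi_*\ev^*E$, together with the $\mathcal{B}$-twist being a dilaton-type translation \eqref{trans} (which you do implicitly acknowledge), and then the tree-sum bound for $\mathcal{C}$ — and you are missing that third piece entirely.

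What you describe as ``the main obstacle'' — that $\ch(P_i)=e^{p_i}$ is not a pure degree-two class — is real but comparatively minor: the paper sidesteps it by using only the degree-two part $p_i=\ch_1(\Phi_i)$ in the divisor equation (Lemma~\ref{poly00}) and letting the remainder be absorbed by the dimension count. Your decomposition $t=a_0\unit+\sum a_iP_i+w$ would also work if everything else were in order. But the missing tree-count for the nodal twist is not bookkeeping that ``is organized by a double induction on $d$ and $\sum k_j$''; it is a separate combinatorial argument that needs to be supplied.
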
 

\begin{corollary} 
\label{cor:polynomiality} 
Expand the quantum multiplication $\Omega_\alpha = \Phi_\alpha\bullet$ 
and the $q$-shift connection operator 
$A_i = S P_i^{-1} \left( q^{Q_i\partial_{Q_i}} S^{-1}\right)$ 
as power series in $Q$ (see Proposition \ref{prop:A_expansion}) 
\[
\Omega_\alpha  = \sum_{d\in \Eff(X)} \Omega_{\alpha,d}(t) Q^d, 
\qquad 
A_i = P_i^{-1} + \sum_{\substack{d\in \Eff(X)\\ d_i>0}} 
\sum_{k=0}^{d_i-1} A_{i,d,k}(t) Q^d (1-q)^k 
\] 
where $d_i = \pair{p_i}{d}$. 
Then the coefficients $\Omega_{\alpha,d}(t)$ and $A_{i,d,k}(t)$ are polynomials 
in $t^1,\dots,t^N$ and $e^{t^1},\dots,e^{t^r}$. 
\end{corollary}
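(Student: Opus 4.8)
The plan is to deduce the corollary from Theorem~\ref{thm:polynomiality} by bootstrapping through the quantum pairing, the quantum products, and the fundamental solution. The first step is to record that $\Omega_\alpha=(\Phi_\alpha\bullet)$ and $A_i$ are independent of $t^0$: the flatness relation $[\nabla^q_0,\nabla^q_\alpha]=0$ together with $\Omega_0=\id$ forces $(1-q)\partial_0\Omega_\alpha=0$, and $[\nabla^q_0,\cA_i]=0$ together with $\Omega_0=\id$ forces $(1-q)(\partial_0A_i)\,q^{Q_i\partial_{Q_i}}=0$ (the latter is already used in the proof of Proposition~\ref{prop:A_expansion}). Hence it suffices to prove that, on the slice $t^0=0$, the coefficients $\Omega_{\alpha,d}(t)$ and $A_{i,d,k}(t)$ lie in the ring $R:=\Q[t^1,\dots,t^N,e^{t^1},\dots,e^{t^r}]$; $t^0$-independence then upgrades this to the stated result. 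I will use freely that $R$ is a polynomial ring, hence a UFD and a domain.

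For the quantum products: the coefficient of $Q^d$ in $\partial_\alpha\partial_\beta\partial_\gamma\cF$ is $\sum_{n}\tfrac1{n!}\corr{\Phi_\alpha,\Phi_\beta,\Phi_\gamma,t,\dots,t}^X_{0,n+3,d}$ and the coefficient of $Q^d$ in $G_{\alpha\beta}=\partial_0\partial_\alpha\partial_\beta\cF$ is $\sum_{n}\tfrac1{n!}\corr{\Phi_0,\Phi_\alpha,\Phi_\beta,t,\dots,t}^X_{0,n+3,d}$, so by Theorem~\ref{thm:polynomiality} both lie in $R$ once $t^0=0$. Since the genus-zero obstruction for constant maps vanishes, $\overline M_{0,n}(X,0)\cong X\times\overline M_{0,n}$ with trivial virtual sheaf, and a short computation gives $G_{\alpha\beta}|_{Q=0}=\chi\bigl(X,\Phi_\alpha\otimes\Phi_\beta\otimes\textstyle\sum_{m\ge0}t^{\otimes m}/m!\bigr)$. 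On the slice $t^0=0$ the element $\sum_{m\ge0}t^{\otimes m}/m!$ is a unipotent unit of $K(X)\otimes\Q[t^1,\dots,t^N]$ (as $\Phi_i$ is nilpotent for $i\ge1$ because $\ch(\Phi_i)\in H^{\ge2}$), so the matrix $G|_{Q=0}$ is invertible over $R$; solving $G\,G^{-1}=I$ recursively in $Q$-degree — finitely many terms at each step by using an ample class $\omega$ — shows every coefficient of $G^{-1}$ is over $R$, whence so is $\Omega_{\alpha,d}$, being a finite sum of products of coefficients of $\partial_\alpha\partial_\bullet\partial_\bullet\cF$ and of $G^{-1}$.

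For the $q$-shift operators: I would solve the linear system $(1-q)\partial_\alpha T=T\Omega_\alpha$ of Theorem~\ref{thm:fund_sol} recursively in $Q$-degree on the slice $t^0=0$, with initial value $T_d|_{t=0}$ (a $t$-independent rational function of $q$). The crucial point is that for $1\le\alpha\le N$ the homogeneous part $M\mapsto\tfrac1{1-q}M\Omega_{\alpha,0}=\tfrac1{1-q}M(\Phi_\alpha\otimes)$ is nilpotent (right multiplication by the nilpotent operator $\Phi_\alpha\otimes$) and the $\Omega_{\alpha,0}$ commute, so the fundamental matrix of the homogeneous system is the unipotent matrix $\sum_{m\ge0}\bigl(\tfrac1{1-q}\sum_{i\ge1}t^i\Phi_i\bigr)^{\otimes m}/m!$, with entries polynomial in the $t^i/(1-q)$. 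Feeding in the already-controlled $\Omega_{\alpha,d''}\in\End(K(X))\otimes R$ and applying variation of parameters (antidifferentiation in $t^1,\dots,t^N$ preserves polynomiality in $t^i$ and $e^{t^i}$), one obtains $T_d$, hence $S=T^{-1}$, hence $A_{i,d}=\sum_{d'+d''=d}q^{d''_i}S_{d'}P_i^{-1}T_{d''}$, with entries in $R(q)$ (rational in $q$, coefficients in $R$). By Proposition~\ref{prop:A_expansion}, $A_{i,d}$ is a polynomial in $q$; since $R$ is a UFD, a rational function of $q$ over $R$ that is a polynomial in $q$ lies in $R[q]$, so its coefficients $A_{i,d,k}(t)$ lie in $R$. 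Combining with $t^0$-independence completes the argument.

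The step I expect to be the main obstacle is the $q$-bookkeeping in the last paragraph: the $Q=0$ part of $T$ equals the multiplication operator by $\sum_{m\ge0}(t/(1-q))^{\otimes m}/m!$, whose $t^0$-component $e^{t^0/(1-q)}$ is \emph{not} rational in $q$, and likewise inverting the $\sum_{m}t^{\otimes m}/m!$-twisted pairing $G|_{Q=0}$ introduces $e^{-t^0}$ for general $t$; the $e^{t^0}$-factors produced by Theorem~\ref{thm:polynomiality} cancel these in the end result but not term by term, so the whole argument must be carried out on the slice $t^0=0$, which is exactly what the $t^0$-independence established at the outset permits. Everything else is routine: for fixed $d$ each recursion involves only finitely many lower degrees (ample class), and antiderivatives in the $t^\alpha$ stay inside $R$.
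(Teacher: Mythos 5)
Your overall structure is sound, and it fills in a derivation that the paper simply asserts (Corollary~\ref{cor:polynomiality} appears with no explicit proof). Establishing $t^{0}$-independence first and then working on the slice $t^{0}=0$, deducing $G^{-1}$ from $G$ by $Q$-degree recursion (valid because $\det G|_{Q=0}=\det g$, a nonzero constant, since the twist by $e^{t}$ is unipotent), and then obtaining $A_{i,d}$ from $T$ via the $\nabla^{q}$-ODE with nilpotent constant part $\Omega_{\alpha,0}=(\Phi_{\alpha}\otimes)$, is a legitimate way to bootstrap from Theorem~\ref{thm:polynomiality}. It is worth noting that the paper's Remark~\ref{rem:algorithm} suggests the intended mechanism for the $A_{i}$-part is the Lax recursion $(1-q)\partial_{\alpha}A_{i,d}=[A_{i,d},\Omega_{\alpha,0}]+(\text{lower }d)$ rather than the $T$-ODE you use; the two are equivalent (same homogeneous part up to conjugation by $e^{t\otimes/(1-q)}$) and both preserve polynomiality under antidifferentiation, so this is a cosmetic difference.

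The one genuinely imprecise step is the final sentence, ``since $R$ is a UFD, a rational function of $q$ over $R$ that is a polynomial in $q$ lies in $R[q]$.'' Read literally with $R(q)=\mathrm{Frac}(R[q])$, this is false: for $R=\Q[t]$ the element $t^{-1}+q$ lies in $R(q)$ and is a polynomial in $q$ over $\mathrm{Frac}(R)$, but its coefficients are not in $R$. What your variation-of-parameters argument actually produces is stronger and makes the conclusion immediate without any UFD input: the entries of $T_{d}$, $S_{d'}$, and hence $A_{i,d}$ lie in $\End(K(X))\otimes R\otimes_{\Q}\Q(q)$, i.e.\ finite $R$-linear combinations $\sum_{j}r_{j}g_{j}(q)$ with $g_{j}\in\Q(q)$ (the initial condition $T_{d}|_{t=0}$ has $\Q$-coefficients, the homogeneous fundamental matrix is a polynomial in $t^{\alpha}/(1-q)$, and antidifferentiation in $t$ stays in $R$). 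Laurent-expanding each $g_{j}$ at $q=0$ then shows that every $q$-Taylor coefficient of $A_{i,d}$ is a $\Q$-linear combination of the $r_{j}$, hence in $\End(K(X))\otimes R$; since Proposition~\ref{prop:A_expansion} says $A_{i,d}$ is a polynomial in $q$ of degree $\le d_{i}-1$, the same holds for the $(1-q)^{k}$-coefficients $A_{i,d,k}(t)$. Replacing the ``UFD'' sentence with this linear-extraction argument closes the only gap; everything else in your proposal is correct.
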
 

The proof of Theorem \ref{thm:polynomiality} relies on twisted cohomological
GW theory. Let $\psi_i=c_1(L_i)$ and let $\pi:X_{0,n+1,d}\to
X_{0,n,d}$ be the universal family. Recall that a multiplicative characteristic
class is by definition a function of the type $E\mapsto
e^{\sum_{k=0}^\infty s_k\ch_k(E)}$, where $s_k\in \C$, $k=0,1,2,\dots$ 
are some constants. For any topological
space $X$, the above formula defines a map from $K(X)\to
H^*(X)$. Let $\mathcal{A},\mathcal{B},\mathcal{C}$ be a triple of
multiplicative characteristic classes. We will make use of {\em twisted} 
GW invariants of the form
\begin{align}\label{ABC-tw}
\la v_1(\psi), \ldots , v_n({\psi}) , \mathcal{A}(\pi_* \ev_{n+1}^* E), 
\mathcal{B}(\pi_*(1-L^{-1}_{n+1})),
\mathcal{C}((-\pi_*i_*\mathcal{O}_\mathcal{Z})^\vee\ra_{0,n,d},
\end{align}
where $E\in K(X)$,  $v_i(z)\in H^*(X)[\![z]\!]$ , $\mathcal{Z}$ is the
codimension two nodal locus in the universal family. 
We are using the same correlator notation for
twisted cohomological GW invariants as for the $K$-theoretic ones and
the last 3 slots of the correlator are cohomology classes on
$X_{0,n,d}$ that should be cupped with the integrand of the usual GW
invariants. 

The invariants \eqref{ABC-tw} have been studied in \cite{to1}, 
where the effect of each type of
twisting on the generating series of GW theory is described. Let us
briefly review the results that we need and refer to \cite{to1} for
more details and proofs. Put
  \begin{align*}
  \mathcal{F}_{\mathcal{A},\mathcal{B},\mathcal{C}}^g(\mathbf{t}(z))
  =\sum_{n,d}\frac{Q^d}{n!}\la \mathbf{t}(\psi),\ldots
  ,\mathbf{t}(\psi),\mathcal{A}(\pi_*\ev_{n+1}^* E),
  \mathcal{B}(\pi_*(1-L^{-1}_{n+1})),
\mathcal{C}((-\pi_*i_*\mathcal{O}_\mathcal{Z})^\vee)\ra_{g,n,d}
\end{align*}
for the twisted genus-$g$ potential and
\begin{align*}
\mathcal{D}_{\mathcal{A},\mathcal{B},\mathcal{C}}(\hbar,\mathbf{t}(z))=\exp\left( \sum_{g\geq 0} \hbar^{g-1}\mathcal{F}_{\mathcal{A},\mathcal{B},\mathcal{C}}^g(\mathbf{t}(z)) \right)
\end{align*}
for the twisted total descendant potential. The argument $\mathbf{t}(z)$ should be understood as a formal power series in $z$ with coefficients in $H^*(X)$.
 We will also denote by $\mathcal{D}_{\mathcal{A},\mathcal{B}}, \mathcal{D}_{\mathcal{A}}$ etc. potentials which are twisted only by the corresponding types characteristic classes. 
Twisting by the class $\mathcal{B}(\pi_* (L_{n+1}^{-1}-1))$ is tantamount to the translation
\begin{align}
\mathcal{D}_{\mathcal{A,B}}(\hbar,\mathbf{t})
=\mathcal{D}_{\mathcal{A}}\left(\hbar,  \mathbf{t} +z - z\mathcal{B}\left(
\frac{\mathbf{L}_{z}^{-1}-1}{\mathbf{L}_{z}-1}\right)\right)
\cdot K_{\mathcal{B}} , \label{trans} 
\end{align}
where $K_\mathcal{B}$ is a constant, whose precise value will be
irrelevant for what follows, so we may assume that it is 1.
   
The class $\mathcal{C}((-\pi_* i_* \mathcal{O}_\mathcal{Z})^\vee)$ is
localized near the  nodes. A correlator cupped with such
a characteristic class is a sum of correlators corresponding to the
stratification of the codimension-2 singular locus
$\mathcal{Z}\subset X_{0,n+1,d}$ by the number of nodes. Each strata is a product of
moduli spaces and each twisted correlator corresponding to a given
strata factors respectively as a product of
correlators (with the $\mathcal{C}$-twist removed). More precisely, we have
     \begin{align}
      \mathcal{D}_{\mathcal{A,B,C}} = \exp\left(\frac{\hbar}{2}\sum_{a,b,\alpha,\beta}A_{a,\alpha;b,\beta}\partial_{a}^{\alpha}\partial_b^{\beta} \right)\mathcal{D}_{\mathcal{A,B}}, \label{psq}
    \end{align}   
where the sum is over all $ a,b\geq 0$, $\{\varphi_\alpha\}$ is a
fixed basis of $H^*(X;\C)$, and $\partial_a^\alpha$ (resp. $\partial_b^\beta$) is
the derivative in the direction of vector $\varphi_\alpha z^a$ (resp. $\varphi_\beta z^b$). The
coefficients $A_{a,\alpha;b,\beta}$ depend on the 
characteristic class $\mathcal{C}$. Their precise value will not be
important, but we have to use the following property: 
\begin{align}\label{dim-constr}
A_{a,\alpha;b,\beta}\neq 0 \quad \Rightarrow \quad 
\deg(\varphi_\alpha)+\deg(\varphi_\beta)=\dim (X).
\end{align}
Let now $p\in H^2(X)$ and denote
  \begin{align*}
  \delta_0(v(z))=v(z),\quad \delta_1 (v(z)) = p\smile\left[\frac{v(z)}{z}\right]_+ ,\quad \delta_{n+1}=\delta_1(\delta_n(v(z)).
   \end{align*}
 where $[\cdots]_+$ denotes the power series truncation. Notice that
   $\pi^*(\pi_* \ev_{m+1}^*E) =\pi_*\ev_{m+2}^*E,$ so the presence of
   the characteristic class $\mathcal{A}$ in the integrand does not
   change the divisor equation, i.e., 
   \begin{align}
 \label{div00} & \la v_1(\psi) ,\ldots ,v_m(\psi),p,
 \mathcal{A}(\pi_*\ev_{m+2}^* E)\ra_{0,m+1,d} = \\ \nonumber
&\langle p,d \rangle \la v_1(\psi),\ldots v_m(\psi), \mathcal{A}\ra_{0,m,d}
   +\sum_{j=1}^m \la v_1(\psi),\ldots, \delta_1v_j(\psi),\ldots ,v_m({\psi}), \mathcal{A}\ra_{0,m,d}. 
   \end{align} 
where on the RHS we denoted $\mathcal{A}=\mathcal{A}(\pi_*\ev^*_{m+1}(E))$. 
  We now prove the following 
  \begin{lemma}
  \label{poly00}
   Let $m$ be a positive integer and $t$ the coordinate dual to $p\in H^2(X)$. Then the series
  \begin{align}
 \label{div02} \sum_{n\geq 0} \frac{t^n}{n!}\la p,\ldots ,p, v_1(\psi), \ldots ,v_m(\psi), \mathcal{A}(\pi_*(\ev^*E))\ra_{0,n+m,d}
  \end{align}
  is a polynomial in $t$ and $e^t$.
  \end{lemma}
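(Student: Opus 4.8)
The plan is to turn the series in \eqref{div02} into a first-order linear ODE in $t$ by means of the divisor equation \eqref{div00}, and then to integrate it by induction on a ``$\delta_1$-depth'' of the decorations $v_1,\dots,v_m$.

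I would write $G_{v_1,\dots,v_m}(t)$ for the series \eqref{div02}; it is an a priori formal power series in $t$, well defined because in each coefficient only finitely many powers of the $\psi_i$ contribute, by dimension. Differentiating in $t$ simply adds one insertion of $p$, so I would apply \eqref{div00} to peel off one of the $p$'s. Crucially $p$ is constant in $z$, so $\delta_1(p)=p\smile[p/z]_+=0$ and the corrections attached to the \emph{other} $p$-insertions all vanish; only the $v_j$'s get corrected. This yields
\[
\frac{\partial}{\partial t}\,G_{v_1,\dots,v_m}(t)=\langle p,d\rangle\,G_{v_1,\dots,v_m}(t)+\sum_{j=1}^{m}G_{v_1,\dots,\delta_1v_j,\dots,v_m}(t).
\]

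The key observation is that $\delta_1$ is nilpotent on $H^*(X)[\![z]\!]$: for $v(z)=\sum_{l\ge0}v_lz^{l}$ one computes $\delta_1^{k}(v(z))=p^{k}\smile\sum_{l\ge k}v_lz^{l-k}$, which vanishes once $p^{k}=0$ in $H^*(X)$, i.e. for $k>\dim_{\C}X$. So I would induct on $\ell:=\sum_{j=1}^{m}\nu(v_j)$, where $\nu(v)$ is the order of nilpotency of $v$ under $\delta_1$ and $\nu(0)=0$: each summand on the right with $\delta_1v_j\neq0$ corresponds to a tuple of $\ell$-value $\ell-1$, while summands with $\delta_1v_j=0$ vanish. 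In the base case $\ell=0$ all $\delta_1v_j$ vanish and the equation integrates to $G_{v_1,\dots,v_m}(t)=G_{v_1,\dots,v_m}(0)\,e^{\langle p,d\rangle t}$, a monomial in $e^{t}$ once $\langle p,d\rangle\ge0$ --- as holds for the nef classes $p$ arising in Theorem \ref{thm:polynomiality}. For the inductive step, $H(t):=\sum_jG_{v_1,\dots,\delta_1v_j,\dots,v_m}(t)$ is a polynomial in $t$ and $e^{t}$ by the induction hypothesis, and variation of constants gives
\[
G_{v_1,\dots,v_m}(t)=e^{\langle p,d\rangle t}\Bigl(G_{v_1,\dots,v_m}(0)+\int_0^{t}e^{-\langle p,d\rangle s}H(s)\,ds\Bigr);
\]
integrating each monomial $s^{a}e^{bs}$ of $e^{-\langle p,d\rangle s}H(s)$ term by term (treating the resonant case $b=0$ separately) shows the right-hand side is again a polynomial in $t$ and $e^{t}$, with nonnegative exponents throughout. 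Since $G_{v_1,\dots,v_m}(t)$ is the unique \emph{formal} solution of the ODE with the given constant term, it coincides with this explicit polynomial.

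The step that takes real thought is the nilpotency of $\delta_1$, together with the correct bookkeeping in the $\psi$-decorated divisor equation (that the only surviving corrections are the $\delta_1v_j$ and that $\delta_1(p)=0$): this is exactly what makes the induction terminate. Everything downstream is routine manipulation of formal power series. The characteristic-class slot $\mathcal{A}(\pi_*\ev^*E)$ is harmless, since $\pi^{*}(\pi_*\ev_{m+1}^{*}E)=\pi_*\ev_{m+2}^{*}E$ (recorded just before \eqref{div00}) so it does not interfere with the divisor equation.
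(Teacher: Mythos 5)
Your proof is correct, and it takes a genuinely different route from the paper's. The paper applies the divisor equation $n$ times to the $n$-th coefficient, obtains a closed-form expression in terms of multinomial data, and then resums the $n$-series into a finite combination of $t^k e^{\lambda t}/k!$; for this it first truncates each $v_i(z)$ to a polynomial in $z$ by a dimension count, and the finiteness in $k$ then relies on that polynomiality. You instead differentiate once, apply the divisor equation a single time to obtain the first-order linear ODE $\partial_t G = \langle p,d\rangle\,G + \sum_j G_{\ldots,\delta_1 v_j,\ldots}$, and integrate by variation of constants, with an induction governed by the nilpotency of $\delta_1$. The two genuinely useful things your route buys are: (i) the nilpotency of $\delta_1$ follows directly from $p^k=0$ in $H^*(X)$, so you never need to truncate the $v_i$; and (ii) you sidestep the combinatorial bookkeeping of the repeated divisor equation entirely (the paper's displayed multinomial coefficients are a potential source of error, and your ODE formulation makes the answer visibly a polynomial in $t$ and $e^{\lambda t}$ without them). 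Two small cosmetic points: with $\nu(0)=0$ your base case $\ell=0$ forces all $v_j=0$ and hence $G\equiv 0$, which is fine but vacuous; the substantive ``base'' of the induction is really the case $\delta_1 v_j=0$ for all $j$, where $H\equiv 0$ and $G=G(0)e^{\lambda t}$, exactly as you use it. And, as you note, the final statement ``polynomial in $t,e^t$'' tacitly uses $\langle p,d\rangle\ge 0$; this is the same implicit assumption the paper makes and is harmless in the intended application where $p$ is nef and $d$ effective.
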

  \begin{proof}  First notice we can assume that $v_i(z)$ are polynomials in $z$ : 
namely if  $D=\dim X_{0,m,d}$ then only the degree $D$ truncation of each $v_i(z)$ (where $\deg(z)=1$) gives non-trivial contributions in the correlators, as can be checked by an easy dimensional count. 

 Now setting $\lambda=\langle p,d\rangle$, we apply the divisor equation (\ref{div00})  repeatedly for a fixed $n$:
  \begin{align*}
  &\frac{t^n}{n!}\la v_1(\psi), \ldots, v_m(\psi),p,\ldots ,p ,\mathcal{A}(\pi_*(\ev^*E))\ra_{0,n+m,d} =  \\
 &=\frac{t^n \lambda}{n!}\la v_1,\ldots ,v_m,p, \ldots p, \mathcal{A}\ra_{0,n+m-1,d}
 +\frac{t^n}{n!}\sum^m_{i=1}\la v_1,\ldots ,\delta_1(v_i),\ldots, p,\ldots p, \mathcal{A}\ra_{0,n+m-1,d} = \ldots \\
 &=\frac{t^n \lambda^n}{n!}\la v_1,\ldots ,v_m, \mathcal{A}\ra_{0,m,d} + \sum_{k\geq 1}\sum_{j_1 + \ldots +j_m =k}\frac{t^n \lambda^{n-k}  \binom{n} {k}}{n!}\la \delta_{j_1}(v_1),\ldots ,\delta_{j_m}(v_m),\mathcal{A}\ra_{0,m,d} .
  \end{align*}
  Summing after $n$ we see that the series (\ref{div02}) equals
  \begin{align*}
   & \sum_{k\geq 0}\sum_{j_1+\ldots + j_m=k}\la \delta_{j_1}(v_1),\ldots ,\delta_{j_m}(v_m),\mathcal{A}\ra_{0,m,d}\left( \sum_{n\geq k}\frac{t^n \lambda^{n-k}\binom{n} {k}}{n!}\right)= \\
  &= \sum_{k\geq 0}\sum_{j_1+\ldots + j_m=k}\la \delta_{j_1}(v_1),\ldots ,\delta_{j_m}(v_m),\mathcal{A}\ra_{0,m,d} \frac{t^k e^{\lambda t}}{k!}.
    \end{align*}
The polynomiality of $v_i$ ensures that there are finitely many $j_i$ such that $\delta_{j_i}(v_i)\neq 0$, hence the above sum is finite in $k$. This concludes the proof of the Lemma. 
\end{proof}
 \begin{proof}[{{\bf Proof of Theorem \ref{thm:polynomiality}}}] 
The polynomiality on $t^0, e^{t^0}$ can be seen directly 
from the $K$-theoretic string equation, so henceforth we assume $t^0=0$. 
 
We use Kawasaki-Riemann-Roch (KRR) theorem of \cite{kaw}, which states
that the Euler characteristics of an (orbi)bundle on a smooth compact
orbifold $\mathcal{Y}$  equals a certain cohomological integral on its
inertia orbifold $I\mathcal{Y}$. The latter is defined set-theoretically as pairs $(y, (g))$ where $y\in \mathcal{Y}$ and $(g)$ is a (conjugacy class of) a symmetry fixing $y$.  It is a smooth (generally disconnected) orbifold.  The connected component of $I\mathcal{Y}$ associated with identity is isomorphic to $\mathcal{Y}$ and the corresponding term in KRR theorem 
 is the right hand side of the Grothendieck-Riemann-Roch theorem. We will not need the exact statement of KRR, suffices to say it takes the form
 \begin{align*}
 \chi(\mathcal{Y},V)=\int_{\mathcal{Y}}\ch(V)
\Td(T_\mathcal{Y})+\int_{\mathcal{Y}_\mu}\cdots ,
 \end{align*}
 where $T_\mathcal{Y}$ is the tangent bundle to $\mathcal{Y}$ and
 $\mathcal{Y}_\mu$ are connected components of 
 $I\mathcal{Y}$ corresponding to non-trivial symmetries,  also known
 as {\em Kawasaki strata}. According to \cite{to2}, the KRR formula
 can be applied to the moduli space $X_{0,n+m,d}$, i.e., 
  \begin{align}
  \label{krr0}
   & \la E_1 L^{k_1},\ldots, E_m L^{k_m} , t,\ldots ,t \ra_{0,m+n,d} =
   \\ \nonumber
&
\la \ch(E_1 L^{k_1}),\ldots ,\ch(E_m L^{k_m}) , \ch(t),\ldots,\ch(t) ,\Td(\mathcal{T}_{n+m,d})\ra_{0,m+n,d} + \cdots
   \end{align}
  where the correlator on the RHS is cohomological and $\mathcal{T}_{n+m,d}$ is the virtual  tangent bundle of the moduli space. 
Let us ignore for now the dots on RHS of \eqref{krr0} and prove that for a given $m\geq 0$ and $v_1(z), \ldots , v_m(z)\in H^*(X)[\![z]\!]$ the series
   \begin{align}   
  \label{div04}   \sum_{n}\frac{1}{n!}\la v_1(\psi),\ldots ,v_m(\psi), 
\ch(t), \ldots , \ch(t), \Td(\mathcal{T}_{n+m,d})\ra_{0,n+m,d}
   \end{align}
is polynomial in $t^i$, $i=1,\dots,N$ and 
$e^{t^i}$, $i=1,\dots,r$. The polynomiality in coordinates 
  $t^i$, $i\geq r+1$ is easy to see for dimensional reasons: an input with 
$\ch(\Phi_i)\in H^{\geq 4}(X)$ adds one to the dimension of the moduli space 
and two to the cohomological degree of the integrand, 
hence only finitely many monomials in these coordinates give non-zero contributions.
It therefore suffices to assume $\ch(t)=t^1p\in H^2(X)$ 
in the series (\ref{div04}). 
The tangent bundle $\mathcal{T}_{n+m,d}$ can be written as 
an element in  $K(X_{0,n+m,d})$  (see \cite{ctom}, \cite{to1})
\begin{align}
\label{tang00}   \mathcal{T}_{n+m,d} =\pi_*(\ev^* T_X-1) 
- \pi_*(L_{n+m+1}^{-1}-1)-(\pi_*i_*\mathcal{O}_\mathcal{Z})^\vee .
\end{align}
Taking $\mathcal{B}=\Td$ in \eqref{trans} we see that twisting by
 $\Td(\pi_*(L_{n+m+1}^{-1}-1))$ acts on the potential
 $\mathcal{D}_\mathcal{A}(\hbar,{\bf t})$ and its partial derivatives (we
 ignore the constant $K_\mathcal{B}$) by the translation 
    \begin{align}
    {\bf t}\mapsto {\bf t}+ z-z\Td\left( \frac{L_z^{-1}-1}{L_z-1} \right) 
= {\bf t}+z+1-e^z =t- \frac{z^2}{2}-\frac{z^3}{3!}-\cdots := t+v(z).
    \end{align}
    Extracting the coefficient of $Q^d$ in genus $0$ (in formula
    \eqref{trans}) we see that the correlator series obtained from
    \eqref{div04} by suppressing the $\mathcal{C}$-twisting (i.e. the
    last term in the tangent bundle \eqref{tang00}) is (using also
    that $\ch(t)=t^1\,p$)
    \begin{align*}
   \sum_{n}&\frac{1}{n!}\la v_1(\psi),\ldots ,v_m(\psi),\ch(t)+
   v(\psi), \ldots , \ch(t)+v(\psi), \Td(\pi_*(\ev^*
   T_X-1))\ra_{0,m+n,d} =\\ 
    = &\sum_{n,k}\frac{(t^1)^n}{n!k!}\la v_1(\psi),\ldots ,v_m(\psi), 
p, \ldots , p,v(\psi),\ldots ,v(\psi), \Td(\pi_*(\ev^* T_X-1))
\ra_{0,m+n+k,d},
\end{align*} 
where on the RHS $n$ and $k$ are respectively the numbers of 
$p$- and $v(\psi)$-insertions. Since the translation vector $v(z)\in z^2
H^*(X)[z]$, the above sum is finite in $k$ for fixed $m$, $n$, and $d$.
Moreover, for dimensional reasons we may assume that $v(z)$ is a polynomial in $z$.
    
According to \eqref{psq} the correlator series \eqref{div04} can be
expressed as a Feynman-type sum over connected decorated trees $T.$ The
vertices $V(T)$ are colored by the parts of a partition of the insertions
$\{v_1(\psi),\dots,v_m(\psi),\ch(t),\dots,\ch(t)\}$ ($n$
repetitions of $\ch(t)$) as well as with degrees $d_v$, s.t., $\sum
d_v=d$, and the flags of $T$ (a flag is a pair of a vertex and an incident
edge) are colored by $(a,\alpha)$ (see formula \eqref{psq}). 
The correlator series \eqref{div04} takes the form
\begin{align}\label{tree-sum}
\sum_T \frac{1}{|\Aut(T)|}\, \prod_{e\in E(T)} A_e \prod_{v\in
  V(T)} \la
\dots,\overbrace{\ch(t),\dots,\ch(t)}^{n_v},\dots,\mathcal{A},\mathcal{B}
\ra_{0,m_v+n_v+l_v,d_v} 
\end{align}
where $E(T)$ is the set of edges of $T$, $A_e:=A_{a,\alpha;b,\beta}$ 
with $(a,\alpha)$ and $(b,\beta)$ 
being the labels corresponding to the two flags of the edge $e$. The
insertions of the correlators are determined by the labels of the
vertices and the flags, so that the first $m_v$ insertions come from
$\{v_1(\psi),\dots,v_m(\psi)\}$, the next $n_v$ insertions are
$\ch(t)$, and the last $l_v$ insertions have the form $\varphi_\alpha \psi^a$
and correspond to the labels of the flags incident with the vertex $v$.  

We have to prove that the sum \eqref{tree-sum} is polynomial in $t^1$
and $e^{t^1}$ if
$m=\sum_v m_v$ and $d=\sum_v d_v$ are fixed. Arguing as above, since
the $\mathcal{B}$-twisting amounts to a translation that does not
change the polynomiality property, we may assume that
$\mathcal{B}=1$. 
We claim that the number of vertices of $T$ is bounded by a number that
depends only on $d$ and $m$. 
Consider first the case $d=0$, i.e., $d_v=0$ for all $v$. 
By dimensional reasons the total number of $\ch(t)$-insertions is
$\sum_v n_v\leq \dim(X)$. Indeed, the correlator corresponding to a
vertex $v$ is an integral over
$\overline{\mathcal{M}}_{0,m_v+n_v+l_v}\times X$, so the total degree
of classes from $H^*(X)$ that are inserted in the correlator is at
most $\dim(X)$. Using the dimension constraint
\eqref{dim-constr} we get 
\begin{align*}
\sum_v n_v + |E(T)|\, \dim(X) \leq
|V(T)|\,\dim(X)
\end{align*}
It remains only to use that $|E(T)|= |V(T)|-1$. Finally, 
by stability, each correlator has at least $3$ insertions, so  
\begin{align*}
3|V(T)| \leq \sum_v (m_v+n_v+l_v) = m+ \Big( \sum_v n_v\Big) +
2|E(T)|\leq  m+ \dim(X)+ 2|V(t)| -2,
\end{align*} 
 which gives the desired bound for $|V(T)|$.
If the total degree is $d>0$, then the total number of vertices $v$ of
$T$, s.t.,  $d_v\neq 0$ is bounded by $\la\omega,d\ra$ where $\omega$
is a very ample class. Removing from $T$ all vertices of non-zero
degree splits the tree into several trees of degree 0. The number of
such trees is at most $\la \omega,d\ra + 1$, while the number of vertices in each
of them can be bounded as it was explained above. This proves
that the number of vertices of $T$ is uniformly bounded.

Let us fix the first $m_v$ and the last $l_v$ insertions in each
correlator in \eqref{tree-sum}. This produces finitely many
configurations, because the first $m_v$ insertions correspond to a partition of a
finite set, hence there are finitely many possibilities, while for the
last $l_v$ insertions, note that the total number of insertions
$\sum_v l_v=2|E(T)|$ is bounded and for dimensional reason we can
insert only finitely many classes of the type 
$\alpha \psi^a$. It follows that for fixed $m$ and $d$, the sum
\eqref{tree-sum} is a polynomial expression of correlator sums of the
type \eqref{div02}, so according to Lemma \ref{poly00}, it 
is polynomial in $t^1$ and $e^{t^1}$. 
      
Going back to \eqref{krr0}: the dots on the RHS are integrals over 
the non-trivial Kawasaki strata of $X_{0,n+m,d}$.  
They can be described combinatorially as follows. 
The points of $X_{0,n+m,d}$ with non-trivial symmetries represent maps 
whose domain has one irreducible component $C$ 
such that the map factors $C\to C' \to X$ 
where the first map is $z\mapsto z^m$ for some $m\geq 2$. 
Notice the only marked points lying on $C$ can be $0$ and $\infty$ 
since they have to be fixed by any symmetry. 
We refer to \cite{Givental-Tonita} for a detailed discussion of these strata. 
It is clear from this description that for a fixed $d$, the number of such 
components $C$ which are multiple covers (and hence the number 
of marked points lying on them) is bounded: hence the contributions 
from integrals over these strata in the generating series is always 
polynomial in $t^i$. Of course such strata can still have irreducible 
components without symmetries ---  and the generating series is 
of the form \eqref{div04} for a  degree less than $d$. 
Now an inductive argument finishes the proof. 
For $d=0$ the moduli spaces are manifolds --- this ensures 
the initial step of the induction is in order.
\end{proof}

\subsection{Preliminary results on the convergence of 
$\cA_i$, $S$ and $T$} 

\begin{lemma} 
\label{lem:extend_domain}
Let $1\le i\le r$ be an integer and 
let $f= f(q,Q_1,\dots,Q_r)$ be a power series of the form 
\[
f= \sum_{d =(d_1,\dots,d_r) \in (\Z_{\ge 0})^r}  
\sum_{k=0}^{d_i} f_{d,k}q^k Q_1^{d_1} \cdots Q_r^{d_r}.  
\] 
Suppose that $f$ is convergent as a power series in 
$q,Q_1,\dots,Q_r$. 
Then there exists a positive constant 
$\rho>0$ such that $f$ is convergent and bounded on 
the set 
\begin{equation} 
\label{eq:daikei} 
\{(q,Q_1,\dots,Q_r) : |qQ_i|\le \rho, |Q_j|\le \rho \ (\forall j)\}. 
\end{equation} 
\end{lemma}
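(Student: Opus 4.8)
The point is that the shape of $f$ -- in every monomial the exponent of $q$ is at most the exponent of $Q_i$ -- makes the substitution $u = qQ_i$ legitimate, i.e.\ free of negative exponents. The plan is to reduce the statement to ordinary absolute convergence of a power series in the variables $u,Q_1,\dots,Q_r$ on a genuine polydisc.

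First I would record the quantitative form of the hypothesis: since $f$ is a convergent power series, there exist positive reals $\epsilon_0,\epsilon_1,\dots,\epsilon_r$ -- and, after shrinking $\epsilon_0$, we may assume $\epsilon_0\le 1$ -- such that
\[
M:=\sum_{d\in(\Z_{\ge0})^r}\ \sum_{k=0}^{d_i}|f_{d,k}|\,\epsilon_0^{\,k}\prod_{j=1}^r\epsilon_j^{\,d_j}<\infty .
\]
This is the standard consequence of convergence of a multivariate power series near the origin (Abel's lemma). Then, using $0\le k\le d_i$, I rewrite $q^{k}Q_i^{\,d_i}=(qQ_i)^{k}Q_i^{\,d_i-k}$ and set
\[
g(u,Q_1,\dots,Q_r):=\sum_{d,k}f_{d,k}\,u^{k}Q_i^{\,d_i-k}\prod_{j\ne i}Q_j^{\,d_j},
\]
an honest power series (all exponents are $\ge 0$) satisfying $f(q,Q)=g(qQ_i,Q)$ at the level of formal series.

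Next I would estimate $g$ on a polydisc. For $|u|\le\rho$ and $|Q_j|\le\rho$ ($1\le j\le r$), the sum of the absolute values of the terms of $g$ is at most $\sum_d\bigl(\sum_{k=0}^{d_i}|f_{d,k}|\bigr)\prod_j\rho^{\,d_j}$. Since $\epsilon_0\le 1$ we have $\epsilon_0^{\,k}\ge\epsilon_0^{\,d_i}$ for $k\le d_i$, hence $\sum_k|f_{d,k}|\le\epsilon_0^{-d_i}\sum_k|f_{d,k}|\epsilon_0^{\,k}$; choosing
\[
\rho:=\min\Bigl(\epsilon_0\epsilon_i,\ \min_{j\ne i}\epsilon_j,\ 1\Bigr)
\]
one gets $\epsilon_0^{-d_i}\prod_j\rho^{\,d_j}\le\prod_j\epsilon_j^{\,d_j}$ for every $d$, so the sum is bounded by $M$. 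Thus $g$ converges absolutely and $|g|\le M$ on $\{|u|\le\rho\}\times\{|Q_j|\le\rho\ \forall j\}$. Finally, since $f(q,Q)=g(qQ_i,Q)$ and the region \eqref{eq:daikei} is exactly the preimage of that polydisc under $(q,Q)\mapsto(qQ_i,Q)$, the term-by-term substitution is justified by the absolute convergence just established, and $f$ converges with $|f|\le M$ there, which is the claim.

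I do not anticipate a real obstacle: the content lies entirely in the change of variables $u=qQ_i$, which is available precisely because of the constraint $k\le d_i$ built into the statement (a naive ``shrink all variables'' estimate fails, since on the region \eqref{eq:daikei} the variable $q$ need not be small -- only $qQ_i$ is). The only points needing a little care are extracting the explicit bound $M$ from ``$f$ is convergent'' and keeping the exponent bookkeeping straight when absorbing $\epsilon_0^{-d_i}$ into the factors $\epsilon_j^{\,d_j}$; both are routine.
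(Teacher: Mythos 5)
Your proposal is correct and takes essentially the same approach as the paper: the paper's proof also begins with a Cauchy-type estimate on $|f_{d,k}|$ and then performs the same change of variables $q^kQ_i^{d_i}=(qQ_i)^kQ_i^{d_i-k}$ (there phrased as the reindexing $d\mapsto d+ke_i$ in the double sum) before summing a geometric series. The only cosmetic difference is that you package the reindexed series as an auxiliary power series $g(u,Q)$ with $u=qQ_i$ and allow separate radii $\epsilon_j$, while the paper keeps a single radius and estimates the reindexed sum directly.
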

\begin{proof} 
The assumption implies that there exist constants $C, \epsilon>0$ such 
that $|f_{d,k}|\le C \epsilon^{-d_1-\cdots -d_r-k}$. 
Choose $0<\delta<\min(1,\epsilon)$ and set $\rho = \delta\epsilon$. 
For $(q,Q_1,\dots,Q_r)$ with $|qQ_i|\le \rho$ and $|Q_j|\le \rho$ 
for all $j$, we have 
\begin{align*} 
\sum_{d,k\ge 0} |f_{d,k}q^k Q_1^{d_1}\cdots Q_r^{d_r}| 
& \le 
\sum_{d,k\ge 0} |f_{d + ke_i, k} (qQ_i)^k Q_1^{d_1} 
\cdots Q_r^{d_r}|  \\ 
& \le 
\sum_{d,k\ge 0} C \epsilon^{-2k-d_1- \cdots -d_r} \rho^{k+d_1+\cdots +d_r} \\
& = C \sum_{d,k\ge 0} (\delta/\epsilon)^k \delta^{d_1+\cdots+d_r} 
=\frac{C}{(1-\delta/\epsilon)(1-\delta)^r}
\end{align*} 
where $d =(d_1,\dots,d_r) \ge 0$ means that $d_j \ge 0$ for all $j$ 
and we set $d + ke_i = (d_1,\dots, d_{i-1}, d_i + k, d_{i+1},\dots,d_r)$. 
The conclusion follows. 
\end{proof} 

In view of Proposition \ref{prop:A_expansion}, 
a natural domain of convergence for the $q$-shift operator  
$\cA_i$ should be of the form \eqref{eq:daikei}. 
We now study the relationship between the analyticity of the $q$-shift operators 
$\cA_i$ and that of the fundamental solutions $S$, $T$. 

\begin{proposition} 
\label{prop:convergence_S}
Suppose that the $q$-shift operators $\cA_i = A_i q^{Q_i\partial_{Q_i}}$, 
$1\le i\le r$ are convergent as power series in $q$, $Q_1,\dots,Q_r$, 
$t^0,\dots, t^N$. 
Then there exists a positive number $\rho>0$ such that the 
fundamental solutions $S, T$ in 
Theorem \ref{thm:fund_sol} 
are convergent in the domain 
\[
\{(q,Q_1,\dots,Q_r, t^0,\dots,t^N)  : 
|q|\neq 1,  |Q_i|<\rho, |t^\alpha|<\rho \}.  
\]
In particular, the $J$-function is convergent on the same region. 
\end{proposition}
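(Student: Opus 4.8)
The plan is to recover $T$ (and with it $S=T^{-1}$ and $J=(1-q)T\Phi_0$) by solving the $q$-difference equation that $T$ satisfies recursively in the Novikov degree, starting from an explicit initial term, and to propagate convergence estimates through the recursion. First I would reduce the statement to the convergence of $T$ alone: once $T$ is shown to be convergent on a domain of the stated shape, $S=T^{-1}$ (Proposition \ref{prop:unitarity}) is convergent on the same domain after possibly shrinking $\rho$, since $\det T=1$ at $Q=t=0$ and the entries of $T^{-1}$ are rational in the entries of $T$ by Cramer's rule; and then $J=(1-q)S^{-1}\Phi_0=(1-q)T\Phi_0$ is convergent as a product of convergent expressions. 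So everything comes down to $T$.

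Next I would set up the difference equation. Rewriting \eqref{eq:A_S} as an identity of operator-valued functions of $Q$ gives, for each $1\le i\le r$,
\[
T(q^{e_i}Q)=P_i\,T(Q)\,A_i(Q),\qquad q^{e_i}Q=(Q_1,\dots,qQ_i,\dots,Q_r),
\]
while Theorem \ref{thm:fund_sol} at $Q=0$, together with the classical limit $\Phi_\alpha\bullet|_{Q=0}=\Phi_\alpha\otimes$, forces the initial term $T|_{Q=0}=\exp\bigl(\tfrac{1}{1-q}(t\,\otimes)\bigr)$, where $(t\,\otimes)$ denotes multiplication by $t=\sum_\alpha t^\alpha\Phi_\alpha$ in the topological $K$-ring; this is an entire function of $t$ and is holomorphic in $q$ away from $q=1$. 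Expanding $T=\sum_d T_d(q,t)Q^d$ and using $A_i=P_i^{-1}+O(Q)$ (Proposition \ref{prop:A_expansion}), the $Q^d$-coefficient of the difference equation becomes the Sylvester-type equation
\[
\bigl(q^{d_i}\id-\Ad_{P_i}\bigr)(T_d)=P_i\sum_{\substack{d''\le d,\ d''\neq 0}}T_{d-d''}\,A_{i,d''},\qquad \Ad_{P_i}(M)=P_iMP_i^{-1}.
\]
For $d\neq 0$ effective one may pick $i$ with $d_i>0$ (such an $i$ exists since the $p_i$ are nef and span); then $\Ad_{P_i}$ is unipotent because $P_i-1$ is nilpotent, so $q^{d_i}\id-\Ad_{P_i}$ is invertible whenever $q^{d_i}\neq 1$, its inverse being $(q^{d_i}-1)^{-1}$ times a polynomial of bounded degree in $(\Ad_{P_i}-\id)/(q^{d_i}-1)$. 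The right-hand side involves only $T_{d'}$ with $\pair{\omega}{d'}<\pair{\omega}{d}$ for a fixed ample $\omega$, so this recursion, fed with $T_0$ and the $A_{i,d''}$, determines all $T_d$; since the genuine $T$ solves \eqref{eq:A_S}, it coincides with this recursive solution. (Alternatively one may run the same recursion directly for $S$, using $S(q^{e_i}Q)=A_i(Q)^{-1}S(Q)P_i^{-1}$ with $S|_{Q=0}=\exp(-\tfrac{1}{1-q}(t\,\otimes))$.)

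Finally I would carry out the estimates. Convergence of $\cA_i$, the shape of $A_i$ from Proposition \ref{prop:A_expansion}, and the argument of Lemma \ref{lem:extend_domain}, together give bounds $\|A_{i,d}(q,t)\|\le C_1\varepsilon_1^{-\pair{\omega}{d}}$ valid for $q$ in a compact $K\subset\{|q|\neq 1\}$ and $t$ in a fixed polydisc; the key point about $K$ is that $\inf_{n\ge1}|q^{n}-1|$ is bounded below there (a compact subset of $\{|q|\neq1\}$ is contained in $\{|q|\le r_1\}\cup\{|q|\ge r_0\}$ for some $r_1<1<r_0$), which makes the resolvent $(q^{d_i}\id-\Ad_{P_i})^{-1}$ bounded uniformly in $d_i\ge1$ and $q\in K$. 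A majorant induction on $\pair{\omega}{d}$ then yields $\|T_d(q,t)\|\le C_2\varepsilon_2^{-\pair{\omega}{d}}$, where $\varepsilon_2>0$ is chosen small enough that the constants produced by the resolvent bound, by $\|P_i\|$, and by $\sum_{d''\neq0}(\varepsilon_2/\varepsilon_1)^{\pair{\omega}{d''}}$ are all absorbed; this forces $\sum_d T_d(q,t)Q^d$ to converge for $|Q_i|$ small and $t$ in the polydisc, uniformly in $q\in K$.

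The main obstacle is obtaining a domain of exactly the stated form, i.e.\ one value of $\rho$ uniform over all $q$ with $|q|\neq1$: the resolvent bound — and hence the radius produced above — degenerates as $q$ approaches a root of unity from $\{|q|\neq1\}$, so the crude majorant estimate only gives convergence on $K\times\{|Q_i|<\rho_K,\,|t^\alpha|<\rho_K\}$ for each compact $K$. To upgrade this one must feed in the extra structure of $T$ established earlier: $T_d$ is rational in $q$ with poles only at roots of unity, of order bounded independently of $d$, and $T_d|_{q=\infty}=\id\cdot\delta_{d,0}$; combining these with maximum-modulus and Cauchy-type estimates on $\{|q|\le r\}$ and $\{|q|\ge R\}$ lets one control $T_d$ near the unit circle and conclude. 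This interplay between the difference-equation recursion and the analytic control of the $q$-dependence is where the real work lies; the reduction to $T$, the difference equation, the Sylvester recursion, and the base case are essentially formal.
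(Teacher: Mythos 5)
Your setup is right and matches the paper's: reduce to $T$ via $S=T^{-1}$, solve the difference equation $P_i^{-1}q^{Q_i\partial_{Q_i}}T=TA_i$ recursively in the Novikov degree, solve the Sylvester-type equation $(q^n\id-\Ad(P_i))(T_n)=\cdots$ using that $q^n\id-\Ad(P_i)$ is invertible for $|q|\neq 1$, and start from $T|_{Q=0}=\exp(\tfrac{1}{1-q}(t\otimes))$. You also correctly identify that the real difficulty is obtaining a $Q$-radius uniform over $\{|q|\neq 1\}$. But the proposal has a genuine gap precisely there, and the remedy you sketch is not the one that works.

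Two concrete problems. First, your estimate $\|A_{i,d}(q,t)\|\le C_1\varepsilon_1^{-\pair{\omega}{d}}$ cannot hold uniformly on a compact set containing large $|q|$: from Proposition \ref{prop:A_expansion}, $A_{i,d}$ is a polynomial of degree $d_i-1$ in $q$, and the Cauchy estimate one actually gets (via Lemma \ref{lem:extend_domain}, integrating over $|Q_i|=\min(\epsilon,\epsilon/|q|)$) is $\|A_{i,n}\|_{\rm op}\le C_1\epsilon^{-n}\max(|q|^n,1)$, which grows with $|q|$. This forces one, for $|q|>1$, to divide the recursion through by $q^n$ before estimating; your proposal does not do this. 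Second, your plan to rescue uniformity by exploiting the rationality of $T_d$ with poles at roots of unity and running a maximum-modulus argument near $|q|=1$ is both undeveloped and problematic: as $d$ grows, $T_d$ acquires poles at more and more roots of unity, which become dense on $|q|=1$, so there is no annulus near the unit circle on which all the $T_d$ are uniformly controlled, and the argument does not obviously close.

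What the paper actually does is simpler and avoids the unit circle entirely. Fix $\delta<1$, pick $n_0$ with $\delta^{n_0}<1/2$, and observe that for $n\ge n_0$ the bound $\|(q^n\id-\Ad(P_i))M\|_{\rm op}\ge C_2\|M\|_{\rm op}$ holds with $C_2$ \emph{independent of both $n$ and $\delta$}, because $|q^n|<1/2$ keeps the scalar part of $q^n\id-\Ad(P_i)$ uniformly invertible. The finitely many initial terms $T_0,\dots,T_{n_0-1}$ are bounded by some $C_4(\delta)$, but the geometric growth rate $C_5$ in the resulting estimate $\|T_n\|\le C_4(\delta)C_5^n$ is \emph{independent of $\delta$}, so the $Q_i$-radius $\sigma=\min(\rho/2,1/C_5)$ does not degenerate as $\delta\to 1$. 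Letting $\delta\to 1$ then yields convergence uniformly for $|q|<1$. The case $|q|>1$ is handled symmetrically after dividing the recursion by $q^n$, which cancels the $|q|^n$ growth in $\|A_{i,n}\|$. This decoupling of the ($\delta$-dependent) initial constant from the ($\delta$-independent) exponential rate is the key mechanism, and it is not present in your argument.
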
 
\begin{proof} 
By the same argument as Lemma \ref{lem:extend_domain}, 
under the assumption in the proposition, 
we can show that $A_i$ is convergent and bounded on 
the domain of the form 
\[
\{(q,Q_1,\dots,Q_r, t^0,\dots,t^N) : |qQ_i|\le \epsilon, |Q_j|\le \epsilon 
\ (\forall j), |t^\alpha|\le \epsilon \ (\forall \alpha) \} 
\]
for some $\epsilon>0$ 
(this is a parametric version of Lemma \ref{lem:extend_domain}). 
We expand 
\[
A_i = \sum_{n=0}^\infty A_{i,n} Q_i^n 
\]
where $A_{i,n}$ is a function of $q$, $\{Q_j\}_{j\neq i}$ and 
$t^0,\dots,t^N$. 
Fix a norm on $K(X)_\C = K(X)\otimes \C $ and consider 
the corresponding operator norm $\|\cdot\|_{\rm op}$ on $\End(K(X)_\C)$. 
Using Cauchy's integral formula we obtain the estimate 
\begin{equation} 
\label{eq:estimate_Ain}
\|A_{i,n}\|_{\rm op} = \frac{1}{2\pi}\,\left \|
\int_{|Q_i|= \min(\epsilon, \epsilon/|q|)} A_i (q,Q,t) \frac{dQ_i}{Q_i^{n+1}} 
\right \|_{\rm op} 
\le C_1 \epsilon^{-n} \max(|q|^n,1) 
\end{equation} 
for all $q$, $\{Q_j\}_{j\neq i}$, $t^0,\dots,t^N$ with 
$|Q_j|\le \epsilon$ and $|t^\alpha|\le \epsilon$. 
Here $C_1>0$ is a constant.

Since $S= T^{-1}$ (Proposition \ref{prop:unitarity}), 
it suffices to prove the convergence of $T$. 
We assume by induction that we know the convergence of 
$T$ on the domain: 
\[
\left\{(q,Q_1,\dots,Q_r, t^0,\dots, t^N) : |q|\neq 1, \ 
|Q_j|<\rho \ (j\le i-1), Q_j=0 \ (j\ge i), 
|t^\alpha|<\rho \ (\forall \alpha) \right\} 
\]
for some $1\le i\le r$ and $0<\rho<\epsilon$. 
The induction hypothesis holds for $i=1$ 
because $T|_{Q=0} = 
\exp(\frac{1}{1-q} (t\otimes))$. 
(This follows, for example, from the differential equation 
in Theorem \ref{thm:fund_sol}). 
We show the convergence of $T$ in the direction 
of $Q_i$ using the $q$-difference equation (see \eqref{eq:def_A}): 
\[
P_i^{-1} q^{Q_i \partial_{Q_i}} T = T A_i. 
\]
Expand 
\begin{align*} 
T|_{Q_{i+1}=\cdots = Q_r=0} &= \sum_{n=0}^\infty T_n Q_i^n. 
\end{align*} 
The coefficient $T_n$ is a function of $q, Q_1,\dots,Q_{i-1}, t^0,\dots, 
t^N$. The difference equation implies that: 
\begin{equation}
\label{eq:recursion_Tn}
q^n T_n - P_i T_n P_i^{-1} = P_i( T_{n-1} A_{i,1} + \cdots + T_0 A_{i,n} ). 
\end{equation} 
This equation determines $T_n$ recursively because 
the operation 
$M \mapsto (q^n - \Ad(P_i))(M) = q^n M - P_i M P_i^{-1}$ is invertible 
for $|q|\neq 1$. 
We need to show that the sum $\sum_{n=0}^\infty T_n Q_i^n$ 
converges for small $|Q_i|$. 

First we consider the case where $|q|\le \delta$ for some $\delta<1$. 
Choose sufficiently big $n_0\gg 1$ such that $\delta^{n_0}<1/2$. 
For $n\ge n_0$, since $P_i$ is unipotent, we have an estimate 
\[
\|(q^n - \Ad(P_i)) M \|_{\rm op} \ge C_2 \|M\|_{\rm op}
\]
for a constant $C_2>0$ independent of $n\ge n_0$ and $\delta$. 
Therefore we have for $n\ge n_0$, 
\[
\|T_n\|_{\rm op} \le C_2^{-1} \|P_i\|_{\rm op} 
\sum_{k=1}^n \|T_{n-k}\|_{\rm op} 
\|A_{i,k}\|_{\rm op} 
\le C_1C_2^{-1} \|P_i\|_{\rm op} 
\sum_{k=1}^n \|T_{n-k} \|_{\rm op} \epsilon^{-k}  
\]
where we used \eqref{eq:estimate_Ain}. 
Set $C_3 = C_1C_2^{-1} \|P_i\|_{\rm op}$. 
Choose a constant $C_4>0$ such that $\|T_k\|_{\rm op} 
\le C_4$ for all $0\le k< n_0$ and all 
$(q,Q_1,\dots,Q_{i-1},t^0,\dots,t^N)$ 
such that $|q|\le \delta$, $|Q_j|\le \rho/2$ ($\forall j$), 
$|t^\alpha|\le \rho/2$ ($\forall \alpha$). 
(Note that $C_4$ can depend on $\delta$.) 
Also choose 
$C_5>0$ such that $C_5 \ge \max(1, 2 \epsilon^{-1}, 
2C_3 \epsilon^{-1})$. 
Note that $C_5$ does not depend on $\delta$. 
We show that $\|T_n\|\le C_4 C_5^n$ holds for all $n\ge 0$ 
and all $(q,Q_1,\dots,Q_{i-1}, t^0,\dots,t^N)$ 
with $|q|\le \delta$, $|Q_j|\le \rho/2$ ($\forall j$), 
$|t^\alpha|\le \rho/2$ ($\forall \alpha$). 
We already know that this holds for $0\le n< n_0$. Suppose by induction that 
we have $\|T_k\|\le C_4C_5^k$ for all $0\le k < n$ 
for some $n\ge n_0$. Then we have: 
\[
\|T_n\|_{\rm op} \le C_3 \sum_{k=1}^{n} C_4 C_5^{n-k} \epsilon^{-k} 
=C_4 \frac{C_3}{\epsilon C_5}  C_5^n \sum_{l=0}^{n-1} \frac{1}{(\epsilon C_5)^l} 
\le C_4 \frac{2C_3}{\epsilon C_5} C_5^n 
\le C_4 C_5^n. 
\]
This completes the induction step: we know that 
the series $\sum_{n=0}^\infty T_n Q_i^n$ 
converges on the set 
\[
\{(q,Q_1,\dots,Q_r, t^0,\dots,t^N): |q|< \delta, |Q_j|<
\sigma \ (j\le i), Q_j = 0\ (j > i), |t^\alpha| < \sigma \ (\forall \alpha)\} 
\]
where $\sigma = \min(\rho/2, C_5^{-1})$. Note that $\sigma$ does 
not depend on $\delta<1$. Since $\delta<1$ was arbitrary, 
we can replace the inequality $|q|<\delta$ above with $|q|<1$. 

Next we consider the case where $|q|\ge \delta$ for some 
$\delta>1$. 
The discussion in this case is similar except that 
we divide both sides of \eqref{eq:recursion_Tn} by $q^n$. 
We choose $n_0'\gg 0 $ such that $\delta^{-n_0'}<1/2$. 
For $n\ge n_0'$, we have an estimate: 
\[
\|(1 - q^{-n} \Ad(P_i)) M \|_{\rm op} \ge  C_2' \|M\|_{\rm op} 
\]
for some $C_2'>0$ independent of $n$ and $\delta$. 
Using \eqref{eq:recursion_Tn} and \eqref{eq:estimate_Ain} again, 
we have for $n\ge n_0'$  
\[
\|T_n\|_{\rm op} \le C_1 {C_2'}^{-1}\|P_i\|_{\rm op} 
\sum_{k=1}^n |q|^{-(n-k)} \|T_{n-k}\|_{\rm op} \epsilon^{-k} 
\le C_3' \sum_{k=1}^n \|T_{n-k}\|_{\rm op} \epsilon^{-k}. 
\]
where $C_3' = C_1 {C_2'}^{-1} \|P_i\|_{\rm op}$. 
By the same discussion as above, we can show that 
there exist $C_4', C_5'>0$ such that we have 
$\|T_n\|_{\rm op}\le C_4' {C_5'}^n$ 
for all $n\ge 0$ and all $(q,Q_1,\dots,Q_{i-1},t^0,\dots,t^N)$ 
with $|q|\ge \delta$, $|Q_j|\le \rho/2$ ($\forall j$) and 
$|t^\alpha|\le \rho/2$ ($\forall \alpha$). 
(Here recall that $T|_{q=\infty} = \id$.)
Also $C_5'$ is independent of $\delta>1$. 
Thus we know that $\sum_{n=0}^\infty T_n Q_i^n$ converges 
also on the set: 
\[
\{(q,Q_1,\dots,Q_r, t^0,\dots,t^N) : |q|>1, |Q_j|< \sigma' 
\ (j\le i), Q_j=0 \ (j>i), |t^\alpha|< \sigma' \ (\forall \alpha)\} 
\]
for $\sigma' = \min(\rho/2, {C_5'}^{-1})$. 
This completes the induction on $i$, and the proof of 
the proposition. 
\end{proof}

\begin{remark} 
In the above proposition, the convergence assumption 
in the direction of $t$ can be weakened as follows. 
If we assume the convergence of $\cA_i$, $i=1,\dots,r$ 
with $t=(t^0,\dots,t^N)$ restricted to 
a certain submanifold $D$ of $\C^{N+1}$, 
then the fundamental solutions $S$, $T$ converge 
in the domain of the form 
\[
\{(q,Q_1,\dots,Q_r, t^0,\dots,t^N) \in \C^{r+1}\times D 
: |q|\neq 1,  |Q_i|<\rho, |t^\alpha|<\rho \}.  
\]
In particular, the convergence of $\cA_i|_{t=0}$ implies 
the convergence of $S|_{t=0}$, $T|_{t=0}$. 
\end{remark} 
 
\subsection{Analyticity of the Reconstruction} 
In this section we prove that our Reconstruction Theorem 
\ref{thm:reconstruction} preserves the analyticity 
under the assumption that $K(X)$ is generated by line 
bundles. 
\begin{theorem}
\label{thm:convergence} 
Suppose that $K(X)$ is generated by line bundles as a ring. 
Suppose also 
that the small $q$-shift operators $\cA_i|_{t=0}$, $i=1,\dots,r$ at $t=0$ 
are convergent as power series in $q$ and $Q_1,\dots,Q_r$. 
Then there exists $\epsilon>0$ such that 
\begin{itemize} 
\item[(1)] 
the GW potential $\cF$, the metric $G$ 
and the big quantum products $\Omega_\alpha = (\Phi_\alpha\bullet)$ 
are analytic on  
$\{(Q_1,\dots,Q_r, t^0,\dots,t^N) : 
|Q_j|<\epsilon \ (\forall j), \ |t^\beta|<\epsilon 
\ (\forall \beta) \}$; 

\item[(2)] the big $q$-shift operator $\cA_i = A_i q^{Q_i\partial_{Q_i}}$ 
is analytic on  
$\{(q, Q_1,\dots,Q_r, t^0,\dots,t^N) : 
|qQ_i|<\epsilon, |Q_j|<\epsilon \ (\forall j), 
|t^\alpha|<\epsilon\ (\forall \alpha) \}$ 
(note that the domain depends on $i$); 

\item[(3)] the fundamental solutions $S$, $T$ are analytic on 
$\{(q,Q_1,\dots,Q_r, t^0,\dots,t^N) : 
|q|\neq 1, |Q_j|<\epsilon \ (\forall j), 
|t^\alpha|<\epsilon \ (\forall \alpha) \}$. 
\end{itemize} 
\end{theorem}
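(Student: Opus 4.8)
The plan is to run the reconstruction of Theorem~\ref{thm:reconstruction} quantitatively, so that the convergence of the small data $\cA_i|_{t=0}$ propagates to all of the reconstructed objects. As in the proof of Theorem~\ref{thm:reconstruction}, the whole statement reduces to a single convergence assertion: that each $q$-shift operator $A_i=A_i(q,Q,t)$ is analytic and bounded on a trapezoidal domain of the form $\{|qQ_i|<\epsilon,\ |Q_j|<\epsilon\ (\forall j),\ |t^\alpha|<\epsilon\ (\forall\alpha)\}$. Granting this --- which in particular gives statement~(2) --- the rest follows formally. By Proposition~\ref{prop:A_expansion} the $q$-dependence of $A_i$ is polynomial, so the specialization $q=1$ is harmless and $A_{i,\rm com}(Q,t)=A_i|_{q=1}$ is analytic on a $Q$-polydisc. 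By Lemma~\ref{lem:rational_expression}, applied with the cyclic vector $\Phi_0$ and polynomials $F_\beta$ satisfying $F_\beta(P_1^{-1},\dots,P_r^{-1})=\Phi_\beta$ (as in Remark~\ref{rem:algorithm}), the quantum products are given by $\Omega_\alpha=M_\alpha(Q,t)\,N(Q,t)^{-1}$, where $M_\alpha,N$ are analytic matrix-valued functions of the entries of the $A_{j,\rm com}$ and $N$ equals the identity matrix at $Q=t=0$; hence $\Omega_\alpha$ is analytic on a smaller $Q$-polydisc. The proof of Proposition~\ref{prop:convergence_S} uses only the boundedness of $A_i$ on a trapezoid, so it goes through here and yields the analyticity of $S$ and $T$ on $\{|q|\ne1,\ |Q_j|<\epsilon,\ |t^\alpha|<\epsilon\}$, which is statement~(3). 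Finally $G(\Phi_\alpha,\Phi_\beta)=g(\overline{T}\Phi_\alpha,T\Phi_\beta)$ --- evaluated at any fixed $q$ with $|q|\ne1$, the value being $q$-independent by Proposition~\ref{prop:unitarity} --- is analytic in $(Q,t)$, and $\cF=G_{00}-(\chi(\cO)+\chi(t)+\tfrac12\chi(t\otimes t))$ by the string equation, so $\cF$ is analytic as well; together with the analyticity of $\Omega_\alpha$ this gives statement~(1).

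It remains to prove the convergence of $A_i$: the formal solution $A_i(q,Q,t)$ of the Lax system~\eqref{system}, with $\Omega_\alpha$ replaced by the rational expression $R_\alpha(A_{1,\rm com},\dots,A_{r,\rm com})$ of Lemma~\ref{lem:rational_expression} and with initial condition $A_i|_{t=0}$, converges on such a trapezoidal domain, uniformly in $t$. I would prove this by the degree-by-degree scheme of Remark~\ref{rem:algorithm}, upgraded to an estimate. Fix an ample class $\omega$. By Lemma~\ref{lem:extend_domain}, the convergence of $\cA_i|_{t=0}$ provides $\rho,C_0>0$ with $A_i|_{t=0}$ analytic and bounded by $C_0$ on $\{|qQ_i|\le\rho,\ |Q_j|\le\rho\}$, equivalently geometric bounds adapted to this trapezoid on its Taylor coefficients. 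One then inducts on $\kappa=\pair{\omega}{d}$: granting uniform geometric bounds on a fixed polydisc $\{|t^\alpha|\le\delta\}$ for all coefficients of all $A_j$ of $Q$-degree $d'$ with $\pair{\omega}{d'}<\kappa$, the refinement of Lemma~\ref{lem:rational_expression} stated in Remark~\ref{rem:algorithm} gives matching bounds for $\Omega_{\alpha,d''}$ with $\pair{\omega}{d''}<\kappa$ and for the ``lower'' part $\Omega_{\alpha,d}^{(2)}$; equation~\eqref{ode} then becomes a linear inhomogeneous system $\partial_\alpha Y=L_\alpha(Y)+B_\alpha(t)$ on the finite-dimensional space $\mathfrak{X}_d$, with $L_\alpha$ the explicit operator~\eqref{eq:linear-op} and $B_\alpha(t)$ controlled by the inductive hypothesis; solving it with the prescribed initial value produces $Y(t)=(A_{i,d,k}(t))$, from which one should read off a bound $\|Y(t)\|\le C\,M^{\pair{\omega}{d}}$ on $\{|t^\alpha|\le\delta\}$ with $C,M$ independent of $d$. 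Feeding these bounds back into Lemma~\ref{lem:extend_domain} then shows that $A_i(q,Q,t)$ converges, uniformly for $|t^\alpha|\le\delta$, on a smaller trapezoid. The analytic function so produced is again a formal solution of~\eqref{system} with the same $t=0$ data, hence equals the reconstructed $A_i$ by the uniqueness in Theorem~\ref{thm:reconstruction}.

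The hard part will be the uniformity in $d$ and the control of combinatorial growth in the inductive step. The operator $L_\alpha$ and the right-hand side of~\eqref{ode} carry binomial factors $\binom{d_i}{k}$ coming from the $q$-shift $Q\mapsto q^{e_i}Q$, which grow exponentially in $\kappa$; bounding the solution of the $t$-ODE naively by $e^{\|L_\alpha\|\,\delta}$ would give a doubly exponential, useless estimate. The resolution is the device that already makes Lemma~\ref{lem:extend_domain} and the proof of Proposition~\ref{prop:convergence_S} work: one must not estimate the $(1-q)$-expansion coefficients $A_{i,d,k}$ individually but rather control the sup-norm of $A_i(q,\cdot,t)$ over the trapezoid $\{|qQ_i|\le\rho\}$, where the polynomiality of $A_i$ in $q$ of degree $\le d_i-1$ (Proposition~\ref{prop:A_expansion}) absorbs the dangerous factors --- concretely, on that set the monomial $q^kQ^d=(qQ_i)^k Q^{d-ke_i}$ is bounded by $\rho^k\,|Q^{d-ke_i}|$, so the $k$-summation producing a factor $\sim 2^{d_i}$ is compensated by a power of $\rho$, exactly as in the estimate in the proof of Lemma~\ref{lem:extend_domain}. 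Accordingly, the whole induction must be carried out in the trapezoidal sup-norm rather than coefficient by coefficient, and one has to check that the linear operators on $\mathfrak{X}_d$ arising at degree $d$, together with the substitution $\Omega_\alpha=R_\alpha(A_{1,\rm com},\dots,A_{r,\rm com})$ composed with the $Q$-shift, have norms bounded uniformly in $d$ with respect to this norm. This uniform estimate is where essentially all of the technical work lies, but it follows the template already used in the proof of Proposition~\ref{prop:convergence_S}.
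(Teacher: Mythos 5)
Your formal reduction of parts (1) and (3) to part (2) is correct and matches the paper's proof: once the $A_i$ are analytic on the trapezoidal domains, Proposition~\ref{prop:A_expansion} gives analyticity of $A_{i,\rm com}$, Lemma~\ref{lem:rational_expression} together with the invertibility of $[B_0,\dots,B_N]$ near $Q=t=0$ gives analyticity of $\Omega_\alpha$, Proposition~\ref{prop:convergence_S} gives $S$, $T$, and then $G$ and $\cF$ follow from the string equation. You have also correctly identified the right domains $\{|qQ_i|\le\rho\}$ and the right norms.

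However, your core convergence argument has a genuine gap, and it is precisely the step you describe as ``where essentially all of the technical work lies.'' You propose to induct on the $Q$-degree $\kappa=\pair{\omega}{d}$ and, at each stage, solve a finite-dimensional linear ODE $\partial_\alpha Y=L_\alpha(Y)+B_\alpha(t)$ on $\mathfrak{X}_d$. But the space $\mathfrak{X}_d$ is by definition the space of $(1-q)$-expansion coefficients $A_{i,d,k}$, which is exactly the coefficient-by-coefficient decomposition that you yourself argue cannot work because of the $\binom{d_i}{k}$ blow-up; and if instead you pass to the trapezoidal sup-norm as you propose, you are no longer working on $\mathfrak{X}_d$ nor decomposing degree-by-degree in $Q$, so the linear ODE structure on $\mathfrak{X}_d$ from Remark~\ref{rem:algorithm} is no longer at your disposal. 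You cannot have both. Your appeal to ``the template already used in the proof of Proposition~\ref{prop:convergence_S}'' does not repair this: that proposition solves a $q$-difference recursion in $Q$, assuming $A_i$ already bounded, by straightforward Cauchy estimates on $Q$-coefficients; it contains no mechanism for controlling a $t$-ODE whose coefficients lose a factor of $(\rho-\rho')^{-1}$ when restricted to a smaller trapezoid.

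That loss-of-derivative phenomenon is the real obstacle, and it is the reason the paper organizes the proof differently. In the paper, the induction is on the number of $t$-variables (not on $Q$-degree); the Lax equation in a single $t^\beta$, after substituting $\Omega_\beta=\Omega_\beta(A_{*,\rm com})$ and the divided difference $D_i=(1-q^{Q_i\partial_{Q_i}})/(1-q)$, is treated as a single ODE valued in a decreasing scale of Banach spaces $\tB_{\beta,i,\rho}$ (the trapezoidal sup-norm spaces), with all $Q$-degrees kept together. The key estimates, Sublemma~(iii) and~(iv), show that $D_i$ and $(M_{\rm com}-M)/(1-q)$ satisfy bounds of Cauchy type $\|\cdot\|_{\rho'}\le \tfrac{C}{\rho-\rho'}\|\cdot\|_\rho$, and then Nishida's abstract Cauchy--Kowalevski theorem (Theorem~\ref{thm:Nishida}) supplies an analytic solution on a $t^\beta$-disc of fixed radius. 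This abstract local-existence theorem is the ingredient your proposal lacks; without it or an explicit majorant argument replacing it, your plan does not close.
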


The rest of the section is devoted to the proof of 
Theorem \ref{thm:convergence}. 
The proof is based on a version of
``abstract Cauchy-Kowalevski theorem" due to Nishida 
\cite{Nishida}. 
We start with a review of the statement.  
(Compared to the original paper, we shifted the origin of 
$u$ and $\rho$ for our convenience.) 
Let $\{B_\rho\}_{\rho\ge \epsilon}$ be a family of Banach spaces 
such that $B_\rho \subset B_{\rho'}$ and 
$\|\cdot\|_{\rho'} \le \|\cdot\|_{\rho}$ for all $\epsilon \le \rho'<\rho$. 
Consider the ordinary differential equation with values in $B_\rho$: 
\begin{equation} 
\label{eq:diffeq}
\frac{du}{dt} = F(u(t)) 
\end{equation} 
with 
\begin{equation}
\label{eq:initial} 
u(0) = u_0. 
\end{equation} 
We assume the following condition on $F$: 
there exist numbers $R>0$, $\rho_0>\epsilon$ and $C>0$ 
such that the following holds 
\begin{itemize} 
\item[(a)] for every $\epsilon\le \rho'<\rho\le \rho_0$, 
$F$ is a holomorphic mapping 
\[
F \colon \{u \in B_\rho: \|u-u_0\|_\rho < R\}\longrightarrow 
B_{\rho'}; 
\]

\item[(b)] for every $\epsilon\le \rho' <\rho \le \rho_0$, 
$u, v\in B_\rho$ with $\|u-u_0\|_\rho<R$, $\|v-u_0\|_\rho<R$, 
we have 
\[
\| F(u) - F(v) \|_{\rho'} \le \frac{C}{\rho-\rho'} \|u - v\|_{\rho}. 
\]

\item[(c)] for every $\epsilon \le \rho<\rho_0$, we have 
\[
\| F(u_0) \|_{\rho} \le \frac{C}{\rho_0 - \rho}. 
\]
\end{itemize}
Note that the constant $C$ is independent of $\rho, \rho', u, v$. 
\begin{theorem}[Nishida {\cite{Nishida}}]
\label{thm:Nishida} 
Under the above assumption, there exists a constant $a>0$ 
such that for every $\epsilon< \rho< \rho_0$, 
there exists a unique holomorphic solution 
\[
u\colon \{t: |t|<a(\rho-\rho_0)\} \to \{ x \in B_\rho : \| x- u_0\|_{\rho} 
< R\} 
\]
to the system \eqref{eq:diffeq}, \eqref{eq:initial}. 
\end{theorem}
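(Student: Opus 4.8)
The plan is to prove this by the classical Picard-iteration / majorant argument underlying the abstract Cauchy--Kowalevski theorem of Nirenberg and Nishida. First I would rewrite the Cauchy problem \eqref{eq:diffeq}, \eqref{eq:initial} as the integral equation $u(t)=u_0+\int_0^t F(u(s))\,ds$, the integral being a Banach-space-valued integral over the segment from $0$ to $t$, and set up the Picard iterates $u^{(0)}(t)\equiv u_0$, $u^{(n+1)}(t)=u_0+\int_0^t F(u^{(n)}(s))\,ds$. The whole difficulty lies in the ``loss of radius'': by (a) and (b) the map $F$ carries $B_\rho$ only into the strictly smaller space $B_{\rho'}$, and with a constant that blows up like $1/(\rho-\rho')$ as $\rho'\to\rho$, so one cannot run a contraction argument inside a single $B_\rho$. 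The device is to let the radius decrease slightly at each step of the iteration while keeping it bounded below by the target radius $\rho$, and to arrange the decrements so that their total is finite.

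The core is the following inductive claim: there is a constant $a>0$, depending only on $C$ and $R$, so that for every $\rho$ with $\epsilon<\rho<\rho_0$ each iterate $u^{(n)}$ is a holomorphic map from the disc $\Delta_\rho:=\{\,t:|t|<a(\rho_0-\rho)\,\}$ into the ball $\{\,x\in B_\rho:\|x-u_0\|_\rho<R\,\}$, and moreover $\|u^{(n)}(t)-u^{(n-1)}(t)\|_\rho\le \Theta_n(|t|/(a(\rho_0-\rho)))$ on $\Delta_\rho$, where $\sum_{n\ge1}\Theta_n(\tau)$ is a fixed power series in $\tau$ with radius of convergence at least $1$ (a geometric-type majorant, with $\Theta_n$ decaying geometrically in $n$ for $\tau<1$). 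The base case $n=1$ is $u^{(1)}(t)-u_0=t\,F(u_0)$, estimated by hypothesis (c). For the inductive step I would write $u^{(n+1)}(t)-u^{(n)}(t)=\int_0^t\big(F(u^{(n)}(s))-F(u^{(n-1)}(s))\big)\,ds$, apply the Lipschitz bound (b) at a well-chosen intermediate radius $\rho_n''\in(\rho,\rho_0)$ --- chosen so that the induction hypothesis is available at radius $\rho_n''$ on the whole segment $[0,t]$, and so that the amounts $\rho_n''-\rho$ of radius surrendered over all $n$ sum to at most $\rho_0-\rho$ (e.g.\ the $n$-th step gives up a fraction proportional to $2^{-n}$) --- and then absorb the resulting factors $1/(\rho_n''-\rho)$ into the $t$-integration together with the smallness $|t|<a(\rho_0-\rho)$. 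This bookkeeping of intermediate radii, together with the elementary estimate needed to close the geometric recursion for $\Theta_n$ and thereby fix $a$ in terms of $C$ and $R$, is the step I expect to be the main obstacle.

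Granting the claim, the iterates $u^{(n)}$ form a uniformly Cauchy sequence in $B_\rho$ on every compact subset of $\Delta_\rho$, so $u:=\lim_n u^{(n)}$ is a holomorphic $B_\rho$-valued function (uniform limit of holomorphic maps), and the uniform bound keeps $u(\Delta_\rho)$ inside the ball of radius $R$; passing to the limit in the integral equation, using the continuity of $F$ from $B_{\rho''}$ to $B_\rho$ furnished by (a) for an intermediate $\rho''$, shows that $u$ solves $u(t)=u_0+\int_0^t F(u(s))\,ds$, hence the Cauchy problem \eqref{eq:diffeq}, \eqref{eq:initial}. For uniqueness, if $u,v$ are two solutions on $\Delta_\rho$ then $u(t)-v(t)=\int_0^t\big(F(u(s))-F(v(s))\big)\,ds$, and running the same intermediate-radius-plus-integration estimate on $\|u(t)-v(t)\|_\rho$ gives, for $|t|$ below a constant multiple of $\rho_0-\rho$, a strict contraction $\sup_{|t|\le r}\|u(t)-v(t)\|_\rho\le\tfrac12\sup_{|t|\le r}\|u(t)-v(t)\|_\rho$, forcing $u\equiv v$ near $0$; the identity theorem for Banach-space-valued holomorphic functions then propagates $u=v$ over all of $\Delta_\rho$. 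Replacing $a$ by a smaller constant if necessary makes the existence disc and the uniqueness disc coincide, which gives the statement with a single constant $a$.
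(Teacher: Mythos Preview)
The paper does not actually prove this theorem: it is quoted verbatim from Nishida's monograph \cite{Nishida} and used as a black box in the proof of Theorem~\ref{thm:convergence}. So there is no ``paper's own proof'' to compare your attempt against.

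That said, your sketch is the standard Nirenberg--Nishida argument and is on the right track. The one place where you are vague in a way that actually matters is the bookkeeping in the inductive step: the usual proof does \emph{not} surrender a dyadic fraction $2^{-n}$ of the radius at step $n$ and keep a geometric majorant $\Theta_n$; rather, one fixes the target radius $\rho$ from the outset and, at each step, estimates $\|u^{(n+1)}(t)-u^{(n)}(t)\|_\rho$ by integrating the Lipschitz bound (b) along the segment while letting the intermediate radius vary with $s$ (typically $\rho'(s)=\rho+(\rho_0-\rho)(1-|s|/|t|)$ or a similar linear interpolation). This produces a recursion whose solution is a constant times $(C|t|/(\rho_0-\rho))^n\,n^n/n!$, and Stirling then gives convergence for $|t|<a(\rho_0-\rho)$ with $a$ depending only on $C$ and $R$. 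Your dyadic scheme as stated would instead accumulate factors like $\prod_k 2^k/(\rho_0-\rho)$, which do not close up to a summable majorant without a more careful choice. If you want to carry out the details, follow Nishida's original or Nirenberg's \emph{J.\ Differential Geom.}\ 6 (1972) paper; the ``main obstacle'' you flag is exactly the content of those references.
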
 

We prove the convergence of $\cA_i = A_i q^{Q_i\partial_{Q_i}}$ and 
$\Omega_\alpha = (\Phi_\alpha\bullet)$ by induction 
on the number of $t$-variables. 
For a positive number $\rho>0$, $1\le i\le r$ and $0\le \beta \le N+1$, 
define $B_{\beta,i,\rho}$ to be the space of 
functions $f(q,Q_1,\dots,Q_r,t^0,\dots,t^{\beta-1})$ 
in the variables $q, Q_1,\dots,Q_r, t^0,\dots,t^{\beta-1}$ 
such that $f$ is continuous and bounded on the set 
\begin{equation} 
\label{eq:polydisc}
D_{i,\beta}(\rho) = 
\{(q,Q_1,\dots,Q_r, t^0,\dots, t^{\beta-1}) : 
|qQ_i|\le \rho, \ |Q_j|\le \rho \ (\forall j) \ , |t^\alpha|\le \rho \ 
(\forall \alpha)\} 
\end{equation} 
and holomorphic in the interior of $D_{i,\beta}(\rho)$. 
Define the norm $\|\cdot\|_{i,\rho}$ on $B_{\beta,i,\rho}$ 
as the sup norm over $D_{i,\beta}(\rho)$. 
Then $B_{\beta,i,\rho}$ is a Banach algebra. 

Set $\tB_{\beta,i,\rho} = \End(K(X)) \otimes B_{\beta,i,\rho}$. 
Choose a norm on $K(X)_\C := K(X)\otimes \C$. 
Let  $\|\cdot\|_{\rm op}$ denote the corresponding operator 
norm on $\End(K(X)_\C)$. 
We define the norm $\|\cdot\|_{i,\rho}$ on $\tB_{\beta,i,\rho}$ by 
\[
\|A\|_{i,\rho} = \sup_{(q,Q,t) \in D_{i,\beta}(\rho)} \|A(q,Q,t)\|_{\rm op} 
\]
where $D_{i,\beta}$ is given in \eqref{eq:polydisc}. 
Lemma \ref{lem:extend_domain} and Proposition \ref{prop:A_expansion} 
show that $A_i|_{t=0}$ belongs to $\tB_{0,i,\rho}$ for some $\rho$ under 
the assumption in Theorem \ref{thm:convergence}.  

\begin{claim} 
\label{cla:induction}
We assume by induction that there exist $0\le \beta \le N$ 
and $\rho>0$ such that 
$A_i|_{t^{\beta}= \cdots = t^r =0}$ 
belongs to $\tB_{\beta,i,\rho}$ for all $1\le i\le r$. 
Then $A_i|_{t^{\beta+1} =\cdots = t^r =0}$ 
belongs to $\tB_{\beta+1,i,\rho'}$ for a possibly smaller 
$\rho'>0$. 
\end{claim}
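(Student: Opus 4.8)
\textbf{Proof of Claim \ref{cla:induction} (proposed strategy).}
The plan is to view the Lax system \eqref{system} with $\alpha=\beta$,
\[
(1-q)\,\partial_\beta A_i(q,Q,t)=A_i(q,Q,t)\,\Omega_\beta(q^{e_i}Q,t)-\Omega_\beta(Q,t)\,A_i(q,Q,t),\qquad 1\le i\le r,
\]
as an ordinary differential equation in the single variable $t^\beta$, with $q,Q_1,\dots,Q_r,t^0,\dots,t^{\beta-1}$ playing the role of parameters and $t^{\beta+1}=\dots=t^r=0$, and to solve it by the abstract Cauchy--Kowalevski theorem of Nishida (Theorem \ref{thm:Nishida}) in the scale of Banach spaces $B_\rho=\prod_{j=1}^{r}\tB_{\beta,j,\rho}$. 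Because $K(X)$ is generated by line bundles, Proposition \ref{prop:A_expansion} gives $A_{j,\rm com}=P_j^{-1}+O(Q)$, so the hypothesis of Lemma \ref{lem:rational_expression} is satisfied and, choosing polynomials $F_\alpha$ with $F_\alpha(P_1^{-1},\dots,P_r^{-1})=\Phi_\alpha$ as in Remark \ref{rem:algorithm}, the quantum multiplication $\Omega_\beta$ is recovered as a matrix of rational functions with coefficients in $\Q$ of the entries of $A_{1,\rm com},\dots,A_{r,\rm com}$, the common denominator being the determinant of a change-of-basis matrix which equals the identity at $Q=0$ and hence is of the form $1+O(Q)$. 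Thus the right-hand side above becomes a \emph{closed} system $du/dt^\beta=F(u)$ for $u=(A_1,\dots,A_r)$, whose initial value $u_0=\bigl(A_i|_{t^\beta=\dots=t^r=0}\bigr)_i$ lies in $B_{\rho_0}$ for some $\rho_0>0$ by the hypothesis of the Claim. I would then recover $A_i|_{t^{\beta+1}=\dots=t^r=0}$ as the $i$-th component of the solution through $u_0$.

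To set up Nishida's theorem I would rewrite each component of $F$, after dividing by $1-q$ and using that $[A_{i,\rm com},\Omega_\beta]=0$ (Corollary \ref{cor:Acom}) so that the numerator vanishes at $q=1$, in the form
\[
F_i(u)=\Bigl[\tfrac{A_i-A_i|_{q=1}}{1-q},\,\Omega_\beta(Q,t)\Bigr]-A_i\cdot\sum_{d\in\Eff(X)}\bigl(1+q+\dots+q^{d_i-1}\bigr)\,\Omega_{\beta,d}(t)\,Q^d,
\]
where $\Omega_\beta=\sum_d\Omega_{\beta,d}Q^d$ is the rational function of $u$ just described and $d_i=\pair{p_i}{d}$. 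Shrinking $\rho$ and the radius $R$ of the ball $\{\,u:\|u-u_0\|_\rho<R\,\}$ keeps the determinant $1+O(Q)$ bounded away from zero, so that $u\mapsto\Omega_\beta(u)$ is holomorphic on that ball and Lipschitz on its bounded subsets; together with the boundedness of multiplication in the Banach algebras $\tB_{\beta,i,\rho}$ this yields hypothesis (a), and hypothesis (c) holds with the same blow-up rate as (b) by the estimate below applied to the single point $u_0\in B_{\rho_0}$. The first bracketed term in $F_i$ defines a bounded operator in the scale with \emph{no} loss of radius, since $\tfrac{A_i-A_i|_{q=1}}{1-q}$ lowers, at each order $Q^d$, the degree in $q$, so that Lemma \ref{lem:extend_domain} applies on the same polydisc $D_{i,\beta}(\rho)$ of \eqref{eq:polydisc}.

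The main obstacle is the Lipschitz-with-loss estimate, hypothesis (b): $\|F(u)-F(v)\|_{\rho'}\le C(\rho-\rho')^{-\gamma}\|u-v\|_\rho$ for $\rho'<\rho$, with $\gamma$ a fixed exponent depending only on $r$ (ideally $\gamma=1$, but a fixed power is still within the scope of the abstract Cauchy--Kowalevski method). The loss is localized entirely in the term $\sum_d(1+q+\dots+q^{d_i-1})\,\Omega_{\beta,d}(t)\,Q^d=\frac{(q^{Q_i\partial_{Q_i}}-1)\Omega_\beta}{1-q}$: the coefficient of $Q^d$ is a sum of $d_i$ monomials in $q$, and estimating it on $D_{i,\beta}(\rho')$ via Cauchy bounds $|\Omega_{\beta,d}(u)-\Omega_{\beta,d}(v)|\le C\rho^{-|d|}\|\Omega_\beta(u)-\Omega_\beta(v)\|_\rho$ together with $|qQ_i|\le\rho'$ produces, after summation over $d$, a factor $\sum_d d_i(\rho'/\rho)^{|d|}\sim C(\rho-\rho')^{-(r+1)}$ --- precisely the derivative-type loss that Theorem \ref{thm:Nishida} is designed to absorb. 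Granting (a), (b), (c), Nishida's theorem provides, for each $\rho<\rho_0$, a holomorphic solution of the system on a disc $\{\,|t^\beta|<a(\rho_0-\rho)\,\}$ with values in the ball of radius $R$ in $B_\rho$; choosing $\rho'=\min(a(\rho_0-\rho),\rho)$ and unwinding the definition of $\tB_{\beta,i,\rho}$, the $i$-th component of this solution is a bounded holomorphic function on $D_{i,\beta+1}(\rho')$. Finally, expanding \eqref{system} in powers of $t^\beta$ determines the $t^\beta$-Taylor coefficients of $A_i$ recursively from $A_i|_{t^\beta=0}$, so by uniqueness this solution coincides with $A_i|_{t^{\beta+1}=\dots=t^r=0}$; hence the latter belongs to $\tB_{\beta+1,i,\rho'}$, which is the Claim. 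Iterating on $\beta$ (the base case $\beta=0$ being supplied by Lemma \ref{lem:extend_domain} and Proposition \ref{prop:A_expansion}), and then invoking Proposition \ref{prop:convergence_S} to pass the convergence on to $S$ and $T$, would complete the proof of Theorem \ref{thm:convergence}.
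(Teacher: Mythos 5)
Your strategy is structurally the same as the paper's: you set up the Lax system as an ODE in $t^\beta$ over the scale of Banach spaces $\bigoplus_i\tB_{\beta,i,\rho}$, use Lemma \ref{lem:rational_expression} to close the system by expressing $\Omega_\beta$ rationally in $A_{*,\rm com}$, and invoke Nishida's abstract Cauchy--Kowalevski theorem. Your rewrite of the right-hand side (using that $[A_{i,\rm com},\Omega_\beta]=0$ and introducing $D_i\Omega_\beta$) is essentially the paper's formula $\partial_\beta A_i = -A_i(D_i\Omega_\beta(A_{*,\rm com})) - [\tfrac{A_{i,\rm com}-A_i}{1-q},\Omega_\beta(A_{*,\rm com})]$ (you have a harmless sign typo in writing $D_i$ as $(q^{Q_i\partial_{Q_i}}-1)/(1-q)$ rather than $(1-q^{Q_i\partial_{Q_i}})/(1-q)$).

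However, there is a genuine gap in your estimate of hypothesis (b). You estimate the divided-difference term coefficient-by-coefficient, $\sum_d d_i(\rho'/\rho)^{|d|}\sim(\rho-\rho')^{-(r+1)}$, and then assert that a ``fixed power'' loss ``is still within the scope of the abstract Cauchy--Kowalevski method.'' That is false: Theorem \ref{thm:Nishida} requires the loss to be exactly $(\rho-\rho')^{-1}$, and if the Lipschitz constant blows up like $(\rho-\rho')^{-\gamma}$ with $\gamma>1$, the iteration scheme produces a time of existence that shrinks to zero and the theorem simply does not apply. As stated, your estimate therefore does not close the argument. The paper's crucial technical observation, which you miss, is that one should \emph{not} expand $D_iM$ in $Q$-coefficients. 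Instead, write $D_iM=\tfrac{1}{1-q}\int_{qQ_i}^{Q_i}\partial M/\partial Q_i\,dQ_i$, apply the mean value theorem to remove the $1/(1-q)$, and then a single Cauchy estimate gives $\|D_iM\|_{i,\rho'}\le\tfrac{C}{\rho-\rho'}\|M\|_\rho$ (Sublemma, part (iii)); the same trick in the $q$-variable gives $\|\tfrac{M_{\rm com}-M}{1-q}\|_{i,\rho'}\le\tfrac{C}{\rho-\rho'}\|M\|_{i,\rho}$ (part (iv)). These single-derivative bounds land exactly at $\gamma=1$, which is what makes Nishida's theorem applicable. Your claim that $\tfrac{A_i-A_{i,\rm com}}{1-q}$ incurs ``no loss of radius'' via Lemma \ref{lem:extend_domain} is also not justified as stated (that lemma concerns passage from a bidisc to the trapezoidal region, not a bounded-division statement), though if it were true it would only help; the real issue is the $(\rho-\rho')^{-(r+1)}$ claim.
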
 

We prove the claim by Nishida's theorem. 
For simplicity, during the proof of the claim, we denote the restrictions 
$A_i|_{t^{\beta+1} = \cdots = t^N=0}$ and 
$\Omega_\beta|_{t^{\beta+1} = \cdots =t^N=0}$ 
by $A_i$ and $\Omega_\beta$. 
Recall that the reconstruction in the $t^\beta$-direction 
is based on the differential equation: 
\begin{equation} 
\label{eq:diffeq_A} 
(1-q) \parfrac{A_i}{t^\beta} = [ A_i q^{Q_i\partial_{Q_i}}, \Omega_\beta] 
\end{equation} 
together with the condition 
\begin{equation}
\label{eq:Omega_unit} 
\Omega_\beta \Phi_0 = \Phi_\beta. 
\end{equation} 
We rewrite the differential equation \eqref{eq:diffeq_A} 
by recalling the argument in the proof of 
Lemma \ref{lem:rational_expression}. 
Set $A_{i, \rm com} := A_i|_{q=1}$. 
Since line bundles generate $K(X)$, we can find 
polynomials $F_\alpha(x_1,\dots,x_r) \in \Q[x_1,\dots,x_r]$, 
$0\le \alpha \le N$ 
such that 
$F_\alpha(P_1^{-1},\dots,P_r^{-1})$, $0\le \alpha\le N$ 
form a basis of $K(X)$. 
Then 
\[
F_\alpha(A_{1,\rm com},\dots,A_{r,\rm com}) 
\quad \alpha=0,\dots,N
\]
form a basis of $K(X)\otimes \C[\![Q,t^0,\dots,t^\beta]\!]$ 
over $\C[\![Q,t^0,\dots,t^\beta]\!]$. 
We also have $[A_{i, \rm com}, \Omega_\beta]=0$ by \eqref{eq:diffeq_A}. 
Therefore the equation \eqref{eq:Omega_unit} gives 
\[
\Omega_\beta F_\alpha(A_{1,\rm com},\dots,A_{r,\rm com}) \Phi_0 
= F_\alpha(A_{1,\rm com},\dots, A_{r,\rm com}) \Phi_\beta 
\]
which can be written as a matrix equation 
as in \eqref{eq:Omega_determined}. 
These equations determine $\Omega_\beta$ as a rational function 
of $(A_{i,\rm com})_{i=1}^r \in \End(K(X)_\C)^r$. 
We denote this rational function by 
\[
(A_{i, \rm com})_{i=1}^r 
\longmapsto \Omega_\beta(A_{*,\rm com}) = \Omega_\beta(A_{1,\rm com},
\dots, A_{r,\rm com}). 
\] 
Let $D_i$ denote the divided difference operator: 
\[
D_i = (1- q^{Q_i\partial_{Q_i}})/(1-q).  
\]
Then we can rewrite \eqref{eq:diffeq_A} as an 
ordinary differential equation 
\[
\frac{d}{dt^\beta} A_i = -A_i \left( D_i \Omega_\beta(A_{*, \rm com}) \right) - 
\left[ \tfrac{A_{i,\rm com}-A_i}{1-q}, \Omega_\beta(A_{*, \rm com}) 
\right] 
\]
where we regard $(A_i)_{i=1}^r$ as a function of $t^\beta$ 
with values in the Banach space 
$\bigoplus_{i=1}^r \tB_{\beta,i,\rho}$. 
We write $A^0_i = A_i|_{t^\beta =0}$ for the initial value 
of $A_i$. 
It now suffices to show that the map 
\begin{equation}
\label{eq:F_A}
F(A_1,\dots,A_r) = 
\left( 
-A_i \left( D_i \Omega_\beta(A_{*,\rm com}) \right) - 
\left[ \tfrac{A_{i,\rm com}-A_i}{1-q}, \Omega_\beta(A_{*,\rm com}) 
\right] 
\right)_{i=1}^r 
\end{equation} 
satisfies the assumptions in the abstract Cauchy-Kowalevski theorem 
(Theorem \ref{thm:Nishida}). 
We choose necessary constants as follows: 
\begin{itemize} 
\item Let $R>0$ be a positive number such that the 
rational function $\Omega_\beta \colon 
\End(K(X)_\C)^r \to \End(K(X)_\C)$ 
is regular on the ball $\{(M_i)_{i=1}^r: \|M_i - P_i^{-1}\|_{\rm op} < 3R\}$. 
\item Choose $\rho_0>0$ such that for each $1\le i\le r$, 
$A_i^0 = A_i|_{t^\beta=0}$ belongs to $\tB_{\beta,i,\rho_0}$ and that 
$\|A^0_{i,\rm com} - P^{-1}_i\|_{\rm op}<R$ 
holds on the set  $\{(Q_1,\dots,Q_r, t^0,\dots,t^{\beta-1}): 
|Q_j|\le \rho_0 \ (\forall j), |t^\alpha|\le \rho_0\ (\forall \alpha) \}$. 
Here $A^0_{i,\rm com} = A_i|_{q=1, t^\beta =0}$. 
\item Choose $0<\epsilon<\rho_0$.
\end{itemize}  
We work with the family of Banach spaces 
$\{\bigoplus_{i=1}^r \tB_{\beta,i,\rho}\}_{\epsilon\le \rho 
\le \rho_0}$. 
We also define $\tB_{\beta,\rho}$ to be the space 
of $\End(K(X)_\C)$-valued functions 
$M(Q_1,\dots,Q_r, t^0,\dots,t^{\beta-1})$ 
in the variables $Q_1,\dots,Q_r, t^0,\dots,t^{\beta-1}$ 
which are continuous and bounded on the set 
\[
\{(Q_1,\dots, Q_r, t^0,\dots,t^{\beta-1}) 
: |Q_j|\le \rho\ (\forall j), |t^\alpha|\le \rho \ (\forall \alpha)\} 
\]
and holomorphic in its interior.  
The norm $\|\cdot\|_{\rho}$ on $\tB_{\beta,\rho}$ is defined 
as the sup norm over the above set. 
For $M\in \tB_{\beta,i,\rho}$, we write 
$M_{\rm com} = M|_{q=1} \in \tB_{\beta,\rho}$ 
for the restriction to $q=1$. 

\begin{sublemma} 
There exists a constant $C>0$ such that the following hold: 
\begin{itemize} 
\item[(i)] for every $\epsilon \le \rho \le \rho_0$ 
and for all $(M_i)_{i=1}^r \in \bigoplus_{i=1}^r \tB_{\beta,i,\rho}$ 
with $\|M_i - A_i^0\|_{i,\rho}<R$, 
we have 
\[
\|\Omega_\beta(M_{*,\rm com}) \|_{\rho} \le C; 
\]

\item[(ii)] for every $\epsilon \le \rho \le \rho_0$ and 
for all $(L_i)_{i=1}^r, (M_i)_{i=1}^r \in \bigoplus_{i=1}^r 
\tB_{\beta,i,\rho}$ with 
$\|L_i- A_i^0\|_{i,\rho}<R$ 
and $\|M_i - A_i^0\|_{i,\rho} <R$ for 
all $1\le i\le r$, we have 
\[
\|\Omega_\beta(L_{*, \rm com}) - \Omega_\beta(M_{*, \rm com})\|_{\rho} 
\le C \sum_{i=1}^r \|L_i - M_i\|_{i,\rho}; 
\]

\item[(iii)] for every pair of numbers 
$\epsilon \le \rho' < \rho \le \rho_0$, 
$1\le i\le r$ 
and for all $M \in \tB_{\beta,\rho}$,  
we have 
\[
\|D_i M\|_{i, \rho'} \le \frac{C}{\rho-\rho'} \|M\|_{\rho}; 
\]

\item[(iv)]  for every pair of numbers 
$\epsilon \le \rho' < \rho \le \rho_0$ 
and for all $M \in \tB_{\beta,i,\rho}$, we have 
\[
\left\|\tfrac{M_{\rm com} - M}{1-q} 
\right \|_{i, \rho'} \le \frac{C}{\rho-\rho'} \|M\|_{i,\rho}. 
\]
\end{itemize}
\end{sublemma}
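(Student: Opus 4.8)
The plan is to prove the four bounds (i)--(iv) separately, each with its own constant, and then take $C$ to be the maximum of those constants. The conceptual split is: (i) and (ii) concern only the fixed rational function $\Omega_\beta\colon\End(K(X)_\C)^r\to\End(K(X)_\C)$ and will follow from its regularity on a ball of matrices of radius comparable to $R$; (iii) and (iv), by contrast, are genuine Cauchy-type derivative estimates for the two ``radius-losing'' operators $D_i$ and $M\mapsto(M_{\rm com}-M)/(1-q)$, and this is precisely where the factor $(\rho-\rho')^{-1}$ enters. Throughout I will use the elementary Cauchy bound $\|g'(w_0)\|_{\rm op}\le B/(r-|w_0|)$ for a Banach-space valued $g$ that is holomorphic and bounded by $B$ on $\{|w|<r\}$, together with the fact that a difference quotient is the mean of the derivative along a segment.

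For (i) and (ii) I would first observe that for any $(Q,t)$ with $|Q_j|\le\rho$ and $|t^\alpha|\le\rho$ the point $(1,Q,t)$ lies in $D_{i,\beta}(\rho)$, so $\|M_{i,\rm com}(Q,t)-A^0_{i,\rm com}(Q,t)\|_{\rm op}\le\|M_i-A^0_i\|_{i,\rho}<R$; combined with $\|A^0_{i,\rm com}(Q,t)-P_i^{-1}\|_{\rm op}<R$, which holds by the choice of $\rho_0$, this places $(M_{1,\rm com}(Q,t),\dots,M_{r,\rm com}(Q,t))$ in the closed ball $\{(L_i)_i:\|L_i-P_i^{-1}\|_{\rm op}\le 2R\}$. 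That ball is compact and convex, and $\Omega_\beta$ is regular on the larger open ball of radius $3R$, hence bounded on it by some $C_0$ and globally Lipschitz on it with some constant $C_1$. Using $\|L_{i,\rm com}(Q,t)-M_{i,\rm com}(Q,t)\|_{\rm op}\le\|L_i-M_i\|_{i,\rho}$ and taking suprema over $(Q,t)$ then gives (i) with any $C\ge C_0$ and (ii) with any $C\ge C_1$; note that $\Omega_\beta(M_{*,\rm com})$ depends only on $Q,t$, so it indeed lies in $\tB_{\beta,\rho}$.

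For (iii), fix everything except $w:=Q_i$ and set $g(w)=M(\dots,w,\dots)$, holomorphic and bounded by $\|M\|_\rho$ on $\{|w|<\rho\}$. On $D_{i,\beta}(\rho')$ one has $|Q_i|\le\rho'$ and $|qQ_i|\le\rho'$, so the segment from $qQ_i$ to $Q_i$ stays in $\{|w|\le\rho'\}$, where $\|g'(w)\|_{\rm op}\le\|M\|_\rho/(\rho-\rho')$; hence
\[
\|(D_iM)(q,Q,t)\|_{\rm op}=\frac{\|g(Q_i)-g(qQ_i)\|_{\rm op}}{|1-q|}\le\frac{|Q_i|}{\rho-\rho'}\|M\|_\rho\le\frac{\rho_0}{\rho-\rho'}\|M\|_\rho,
\]
which is (iii) with $C\ge\rho_0$. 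For (iv), when $Q_i\ne 0$ the slice $h(q')=M(q',Q,t)$ is holomorphic and bounded by $\|M\|_{i,\rho}$ on $\{|q'|<\rho/|Q_i|\}$, and since $|Q_i|\le\rho'$ both $q'=1$ and $q'=q$ (the latter because $|qQ_i|\le\rho'$) lie in $\{|q'|\le\rho'/|Q_i|\}$, where $\|h'(q')\|_{\rm op}\le|Q_i|\,\|M\|_{i,\rho}/(\rho-\rho')$; integrating along the segment from $q$ to $1$ gives $\|(M_{\rm com}-M)/(1-q)\|_{\rm op}\le|Q_i|\,\|M\|_{i,\rho}/(\rho-\rho')\le\rho_0\|M\|_{i,\rho}/(\rho-\rho')$. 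When $Q_i=0$ the function $q'\mapsto M(q',\dots,0,\dots)$ is a bounded entire function, hence constant by Liouville, so $M_{\rm com}-M$ vanishes identically on this locus and the estimate is trivial there. In both (iii) and (iv) one checks routinely that the output is continuous on $D_{i,\beta}(\rho')$ and holomorphic in its interior, using that the numerator vanishes on $\{q=1\}$ (so no pole) and, for (iv), Riemann's removable-singularity theorem in $Q_i$; then $C=\max(C_0,C_1,\rho_0)$ serves for all four bounds.

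The hard part is (iv): on $D_{i,\beta}(\rho)$ the variable $q$ is not confined to a bounded region as $Q_i\to 0$, so one cannot naively Cauchy-estimate in $q$ over a fixed disc. The observation that rescues the argument is that the natural disc of holomorphy in $q$, namely $\{|q|<\rho/|Q_i|\}$, automatically contains the point $q=1$ precisely because $|Q_i|\le\rho'\le\rho$, and the bound $|Q_i|\le\rho_0$ keeps the resulting estimate uniform in $Q_i$; the degenerate locus $Q_i=0$ is then disposed of by Liouville's theorem rather than by an estimate. Everything else is bookkeeping about which point lies in which polydisc.
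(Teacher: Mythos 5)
Your proof is correct and takes essentially the same route as the paper's: Lipschitz continuity and boundedness of the rational map $\Omega_\beta$ on a $2R$-ball for (i)--(ii), and a mean-value-plus-Cauchy derivative estimate for (iii)--(iv), with the $|Q_i|$-dependent disc of holomorphy in $q$ supplying the crucial factor $|Q_i|/(\rho-\rho')\le\rho_0/(\rho-\rho')$. The only addition on your side is the explicit Liouville argument disposing of the $Q_i=0$ slice in (iv), a point the paper leaves implicit; this is a harmless and slightly more careful finishing touch.
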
 
\begin{proof} 
Part (i) and (ii) follows from the fact that $\Omega_\beta(M_*)$ is 
Lipschitz continuous and bounded on the ball $\{(M_i)_{i=1}^r\in \End(K(X)_\C)^r: 
\|M_i -  P_i^{-1}\|_{\rm op}\le 2 R\}$ 
and that we are working with the sup norm. 

Next we show Part (iii). 
Note that $D_i M= (M(Q,t) - M(q^{e_i}Q,t))/(1-q)$ is defined in the region  
$\{(q,Q_1,\dots,Q_r,t^0,\dots,t^{\beta-1}) : |qQ_i|\le \rho, |Q_j|\le \rho \ (\forall j), 
|t^\alpha|\le \rho \ (\forall \alpha) \}$.  
By the mean value theorem  we have 
\begin{align*} 
\|(D_i M)(q,Q,t)\|_{\rm op} 
& = \left\|\frac{M(Q,t) - M(q^{e_i}Q,t)}{1-q} \right\|_{\rm op} 
= \left\| \frac{1}{1-q} 
\int_{qQ_i}^{Q_i} \parfrac{M}{Q_i}(Q,t) dQ_i\right\|_{\rm op} \\ 
& \le \frac{1}{|1-q|}  
\int_{qQ_i}^{Q_i} \left\| \parfrac{M}{Q_i}(Q,t) \right\|_{\rm op} |dQ_i| 
= |Q_i| \cdot \left \| \parfrac{M}{Q_i}(Q_1,\dots, Q_i^*, \dots,Q_r,t) \right\|_{\rm op}
\end{align*} 
for some $Q_i^*$ in the interval $[qQ_i, Q_i]$. 
On the other hand, if we have $|Q_j|\le \rho'$ ($\forall j$) 
and $|t^\alpha|\le \rho'$ ($\forall \alpha$), then 
\[
\left\| \parfrac{M}{Q_i} (Q,t)\right\|_{\rm op} 
\le \frac{1}{\rho-\rho'} \| M \|_{\rho} 
\]
by Cauchy's integral formula. Combining the two estimates, we 
obtain 
\[
\|D_i M\|_{i,\rho'} \le \rho_0 \left\|\parfrac{M}{Q_i} \right\|_{\rho'} 
\le \frac{\rho_0}{\rho-\rho'} \|M\|_{\rho} 
\]
as required. 

Part (iv) is essentially identical to Part (iii). 
By the mean value theorem we have: 
\[
\left \|\tfrac{M(1,Q,t) - M(q,Q,t)}{1-q}\right \|_{\rm op} 
\le \left \|\parfrac{M}{q}(q_*, Q,t) \right \|_{\rm op}  
\]
for some $q_*$ in the interval $[q,1]$. 
If $|qQ_i|\le \rho'$, $|Q_j|\le \rho'$ ($\forall j$) 
and $|t^\alpha|\le \rho'$ ($\forall \alpha$), 
we have 
\[
\left\|\parfrac{M}{q}(q,Q,t)\right\| \le \frac{|Q_i|}{\rho-\rho'} \|M\|_{i,\rho}
\le \frac{\rho_0}{\rho-\rho'} \|M\|_{i,\rho}  
\]
by Cauchy's integral formula. 
Therefore we have 
\[
\left\|\tfrac{M_{\rm com} - M}{1-q} \right\|_{i, \rho'}  
\le \left \| \parfrac{M}{q} \right\|_{i, \rho'}
\le \frac{\rho_0}{\rho-\rho'} \|M\|_{i,\rho} 
\]
as required. 
\end{proof} 

The above sublemma shows that $F(A_1,\dots,A_r)$ 
in \eqref{eq:F_A} gives a holomorphic map 
\[
F \colon \left\{(A_i)_{i=1}^r \in \bigoplus_{i=1}^r \tB_{\beta,i,\rho} : 
\|A_i - A_i^0\|_{i,\rho} < R \right \} 
\longrightarrow 
\bigoplus_{i=1}^r \tB_{\beta,i,\rho'}
\] 
for every pair $(\rho,\rho')$ 
such that $\epsilon \le \rho' < \rho \le \rho_0$. 
We write $F(A_1,\dots,A_r) = (F_i(A_*))_{i=1}^r$. 
Moreover, for all $(L_i)_{i=1}^r, (M_i)_{i=1}^r 
\in \bigoplus_{i=1}^r \tB_{\beta,i,\rho}$ with $\|L_i - A^0_i \|_{i,\rho}<R$ 
and $\|M_i - A^0_i \|_{i,\rho} < R$, and for all 
$\epsilon\le \rho'<\rho\le \rho_0$, 
we have 
\begin{align*} 
& \|F_i(L_*)  - F_i(M_*)\|_{i,\rho'} \le 
\left\| (M_i - L_i) D_i\Omega_\beta(M_{*,\rm com}) 
+ L_i (D_i \Omega_\beta(M_{*,\rm com})- 
D_i\Omega_\beta (L_{*,\rm com})) \right\|_{i, \rho'}  \\
& \qquad \quad + \left \| 
\left[ \tfrac{M_{i,\rm com} -M_i}{1-q}, \Omega_\beta(M_{*,\rm com})- 
\Omega_\beta (L_{*, \rm com}) \right] 
+ 
\left[ \tfrac{(M_i-L_i)_{\rm com} -(M_i-L_i)}{1-q}, 
\Omega_\beta (L_{*, \rm com}) \right] 
\right\|_{i, \rho'} \\ 
&\le \|M_i -L_i\|_{i,\rho'} \frac{C}{\rho-\rho'} 
\|\Omega_\beta(M_{*,\rm com})\|_{\rho} 
+ \|L_i \|_{i,\rho'} \frac{C}{\rho-\rho'} 
\|\Omega_\beta (M_{*,\rm com}) - \Omega_\beta (L_{*,\rm com})
\|_{\rho} \\ 
& 
\quad + 2 \frac{C}{\rho-\rho'} \|M_i \|_{i, \rho}  
\| \Omega_\beta(M_{*, \rm com}) - \Omega_\beta(L_{*, \rm com})\|_{\rho'} 
+ 2 \frac{C}{\rho-\rho'} \|M_i-L_i\|_{i, \rho} 
\|\Omega_\beta(L_{*,\rm com})\|_{\rho'} \\ 
& \le \left(3 C^2 
+ 3 C^2 (R+ \|A^0_i\|_{i,\rho_0} ) 
\right)
\frac{1}{\rho -\rho'} \sum_{i=1}^r \|L_i-M_i\|_{i,\rho} 
\end{align*} 
We also have for $\epsilon \le \rho<\rho_0$, 
\begin{align*} 
\|F_i(A_*^0) \|_{i,\rho} 
& \le \|A_i^0 (D_i \Omega_\beta(A_*^0))\|_{i,\rho} 
+\left \|\left[\tfrac{A_{i,\rm com}^0 - A_i^0}{1-q}, \Omega_\beta(A_*^0)
\right] \right \|_{i,\rho}  \\ 
& \le \|A_i^0\|_{i,\rho} \frac{C}{\rho_0-\rho} 
\|\Omega_\beta(A_*^0)\|_{\rho_0} 
+ \frac{2C}{\rho_0-\rho} \|A_i^0\|_{i,\rho_0} 
\|\Omega_\beta(A_*^0)\|_{\rho} \\ 
& \le 3 C^2 \|A_i^0\|_{i,\rho_0} 
\frac{1}{\rho_0- \rho}. 
\end{align*} 
The proof of Claim \ref{cla:induction} is complete. 

By induction we now know that there exists a positive number 
$\rho>0$ such that, for all $1\le i\le r$, $A_i$ is analytic on the region 
\[
\{(q,Q_1,\dots,Q_r, t^0,\dots,t^N) : |qQ_i|< \rho, 
\ |Q_j|< \rho \ (\forall j), |t^\alpha| < \rho \ 
(\forall \alpha) \}. 
\]
As discussed above, the big quantum products 
$\Omega_\beta = (\Phi_\beta \bullet)$ are determined 
as rational functions of $A_{i,\rm com}$, $1\le i\le r$, 
and hence, by taking a smaller $\rho$ if necessary, 
are analytic on the region: 
\begin{equation} 
\label{eq:rho_ball}
\{(Q_1,\dots,Q_r, t^0, \dots, t^N): 
|Q_j|<\rho, \ (\forall j), |t^\alpha|<\rho \ 
(\forall \alpha)\}. 
\end{equation} 
By Proposition \ref{prop:convergence_S}, by taking 
a smaller $\rho$ if necessary, the fundamental solutions 
$S$ and $T$ are convergent on 
\[
\{(q, Q_1,\dots,Q_r, t^0, \dots, t^N): 
|q| \neq 1, |Q_j|<\rho \ (\forall j), |t^\alpha|<\rho \ 
(\forall \alpha) \}. 
\]
By Proposition \ref{prop:unitarity}, we know that the metric 
tensor $G_{\alpha\beta} = 
G(\Phi_\alpha,\Phi_\beta) = g(\ov{T} \Phi_\alpha, \Phi_\beta)$ 
is analytic on the region \eqref{eq:rho_ball}. 
By the string equation, the genus zero potential $F(t)$ satisfies 
\[
F(t) = F_{000}(t) - \chi(\cO) - \chi(t) - \frac{1}{2} \chi(t\otimes t). 
\]
Because $F_{000}(t) = G_{00}(t)$, it follows that 
$F(t)$ is also analytic on the region \eqref{eq:rho_ball}. 
The proof of Theorem \ref{thm:convergence} is now complete. 

\subsection{Applications: semisimplicity of $\C\Proj^N$ and $\Fl_3$}

\begin{proposition} 
\label{prop:projective} 
The genus-zero quantum $K$-theory of 
$\C\Proj^N$ is analytic, i.e.~the conclusions of 
Theorem \ref{thm:convergence} hold for $\C\Proj^N$.  
Moreover, the big quantum $K$-rings of $\C\Proj^N$  
is semisimple for a generic $(Q,t)$. 
\end{proposition}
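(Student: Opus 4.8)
The plan has two parts: to derive the analyticity from Theorem~\ref{thm:convergence}, and then to obtain generic semisimplicity by producing a single semisimple fibre and invoking the identity theorem for analytic functions.

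First I would verify the hypotheses of Theorem~\ref{thm:convergence}. The ring $K(\C\Proj^N)=\Q[P,P^{-1}]/\big((1-P^{-1})^{N+1}\big)$ is generated by the single line bundle $P=\cO(1)$, and the Picard rank is one, so there is exactly one $q$-shift operator $\cA=A\,q^{Q\partial_Q}$. By Proposition~\ref{prop:A_projective}, $A|_{t=0}$ is given explicitly by $I$ minus the matrix with $1$'s on the first subdiagonal and a single entry $Q$ in the upper-right corner; in particular $A|_{t=0}$, and hence $\cA|_{t=0}$, is a polynomial of degree one in $Q$ (and independent of $q$), so it is trivially convergent as a power series in $q$ and $Q$. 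Applying Theorem~\ref{thm:convergence} then yields the analyticity of $\cF$, $G$, the big quantum products $\Omega_\alpha$, the $q$-shift operator $\cA$ and the fundamental solutions $S,T$ on a polydisc $\{\,|Q|<\epsilon,\ |t^\alpha|<\epsilon\,\}$.

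For the semisimplicity statement I would use the criterion that a finite-dimensional commutative $\C$-algebra is semisimple if and only if multiplication by a suitable element has pairwise distinct eigenvalues. Put $u:=1-P^{-1}$ and let $\Delta(Q,t)$ be the discriminant of the characteristic polynomial of the quantum multiplication operator $(u\bullet)$; this is an analytic function on the connected polydisc of convergence, so it is enough to show $\Delta\not\equiv 0$. I would check this at $t=0$: by Corollary~\ref{cor:CP_product} the small quantum $K$-product obeys $u\bullet u^k=u^{k+1}$ for $k<N$ and $u\bullet u^N=Q$, so the small quantum $K$-ring at $t=0$ is $\C[u]/(u^{N+1}-Q)$ and $(u\bullet)|_{t=0}$ is the companion matrix of $x^{N+1}-Q$, whose eigenvalues are the $(N+1)$-th roots of $Q$ and hence distinct whenever $Q\neq 0$. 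Thus $\Delta(Q,0)\neq 0$ for $0<|Q|<\epsilon$, so $\Delta$ is not identically zero on the polydisc.

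Since a nonzero analytic function on a connected open set vanishes only on a proper analytic subset, the locus of $(Q,t)$ where $(K(\C\Proj^N)_{\C},\bullet)$ fails to be semisimple is a proper analytic subvariety of the polydisc; off this locus, i.e.\ for generic $(Q,t)$, the big quantum $K$-ring is semisimple. The one step requiring a little care --- and essentially the only place a subtlety could hide --- is this passage from a single semisimple point to genericity, which rests on connectedness of the domain of convergence together with the identity theorem; I expect it to be routine. (Semisimplicity genuinely fails along $\{Q=0\}$, where the product degenerates to the classical $\C[u]/(u^{N+1})$, consistently with the statement since $\{Q=0\}$ is a proper analytic subset.)
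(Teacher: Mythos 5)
Your proof is correct and follows essentially the same route as the paper's: both verify the hypotheses of Theorem~\ref{thm:convergence} from Proposition~\ref{prop:A_projective} and then deduce generic semisimplicity from the fact that a distinguished multiplication operator has distinct eigenvalues (the $(N{+}1)$-th roots of $Q$) at $t=0$, $Q\neq 0$. The only cosmetic differences are that the paper works with $A_{\rm com}|_{t=0}$ (which at $t=0$ is exactly $1-(u\bullet)$ in your notation, so the eigenvalue computation is identical) and simply cites openness of semisimplicity where you spell out the discriminant/identity-theorem argument.
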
 
\begin{proof} 
By Proposition \ref{prop:A_projective}, 
the $q$-shift operator $\cA|_{t=0}$ of the projective space 
is analytic in $q$ and $Q$. 
Therefore the conclusions of Theorem \ref{thm:convergence} 
hold for $\C\Proj^N$. 
The computation in Proposition \ref{prop:A_projective} 
also shows that $A_{\rm com}|_{t=0}$ with $Q\neq 0$
is a semisimple endomorphism with pairwise distinct eigenvalues. 
(The difference equation \eqref{fdj} implies 
$(1-A_{\rm com})^{N+1} =Q$ and the eigenvalues of 
$1-A_{\rm com}$ are $(N+1)$th roots of $Q$.)
Because $A_{\rm com}$ commutes with the quantum product 
(Corollary \ref{cor:Acom}), 
this implies that the quantum $K$-ring is semisimple for $t=0$, $Q\neq 0$. 
Being an open property, the semisimplicity holds for a generic 
$(Q,t)$. 
\end{proof} 

\begin{proposition} 
\label{prop:flag3} 
The same conclusion as Proposition \ref{prop:projective} 
holds for $\Fl_3$.  
\end{proposition}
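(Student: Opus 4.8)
The plan is to follow the proof of Proposition~\ref{prop:projective} almost verbatim, using the explicit computations of \S\ref{subsec:flag} in place of Proposition~\ref{prop:A_projective}. Two things have to be checked: the analyticity assertions of Theorem~\ref{thm:convergence}, and the generic semisimplicity of the big quantum product.

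\emph{Analyticity.} In \S\ref{subsec:flag} the small $q$-shift operators $\cA_1|_{t=0}$ and $\cA_2|_{t=0}$ were computed to be given by explicit $6\times 6$ matrices $A_1,A_2$ whose entries are polynomials in $Q_1,Q_2$ (in fact independent of $q$). Such operators are trivially convergent — indeed entire — as power series in $q,Q_1,Q_2$. Since $K(\Fl_3)$ is generated as a ring by the line bundles $P_1,P_2$, the hypotheses of Theorem~\ref{thm:convergence} are satisfied, and we conclude at once that $\cF$, $G$, the big quantum products $(\Phi_\alpha\bullet)$, the big $q$-shift operators $\cA_i$, and the fundamental solutions $S,T$ are all analytic on a neighbourhood of $(Q,t)=0$ of the shape described there.

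\emph{Semisimplicity.} By Corollary~\ref{cor:Acom}, $A_{1,\rm com}|_{t=0}=A_1|_{t=0}$ and $A_{2,\rm com}|_{t=0}=A_2|_{t=0}$ commute with the small quantum product, and by \S\ref{subsec:flag} they represent small quantum multiplication by $P_1^{-1}$ and $P_2^{-1}$. The relations \eqref{eq:diffop_J}--\eqref{eq:product_intermsof_A} express every basis element $\Phi_\beta$ as a polynomial $f_\beta(A_1,A_2,Q_1,Q_2)$ applied to $\Phi_0$, so the operators $A_1|_{t=0}$ and $A_2|_{t=0}$ generate the small quantum $K$-ring as an algebra over $\Q[\![Q_1,Q_2]\!]$; indeed, applying a relation $\sum c_\beta f_\beta(A_1,A_2,Q)=0$ to $\Phi_0$ forces all $c_\beta=0$, so after specializing $Q$ to a point of $(\C^\times)^2$ the small quantum $K$-ring at $t=0$ is exactly the $6$-dimensional commutative subalgebra $\C[A_1,A_2]\subset\End(K(\Fl_3)\otimes\C)$. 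As in the proof of Proposition~\ref{prop:projective}, to show this algebra is semisimple with simple spectrum for generic $(Q_1,Q_2)$ it suffices to produce $c_1,c_2\in\C$ and $(Q_1^0,Q_2^0)$ with $Q_i^0\neq 0$ for which $c_1A_1+c_2A_2$ has $6$ distinct eigenvalues: then its minimal polynomial inside the commutative algebra has degree $6=\dim_\C K(\Fl_3)$, whence $\C[A_1,A_2]=\C[c_1A_1+c_2A_2]\cong\C[\lambda]/(p(\lambda))$ with $p$ squarefree, which is \'etale.

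Concretely one forms, from the explicit matrices of \S\ref{subsec:flag}, the characteristic polynomial $p(\lambda;c_1,c_2,Q_1,Q_2)$ of $c_1A_1+c_2A_2$ and its discriminant $\Delta(c_1,c_2,Q_1,Q_2)$; since the small quantum product of $\Fl_3$ is polynomial in $Q$ (the finiteness proved in \S\ref{subsec:flag}), $\Delta$ is a genuine polynomial with rational coefficients, and one checks $\Delta\not\equiv 0$ by a single symbolic (or numerical) evaluation at a convenient choice with $Q_i^0\neq 0$. This is the routine but essential step, and the only point to watch is that $\Delta$ necessarily vanishes at $Q=0$, where $K(\Fl_3)$ is not reduced, so one must evaluate away from the origin. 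Once $\Delta\not\equiv 0$ is known, the small quantum $K$-ring at $t=0$ is semisimple for $Q$ in a dense open subset of $(\C^\times)^2$, in particular at some $Q^0$ inside the polydisc of convergence furnished by the first part. Finally, the discriminant of a fixed generic linear combination $c_1(\Phi_1\bullet)+c_2(\Phi_3\bullet)$, viewed as a function of $(Q,t)$, is holomorphic on that polydisc by the analyticity just established and does not vanish at $(Q^0,0)$; hence its zero locus is a proper analytic subset, and the big quantum $K$-ring of $\Fl_3$ is semisimple off this subset, i.e.\ for generic $(Q,t)$.
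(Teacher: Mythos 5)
Your proof is correct and takes essentially the same route as the paper, which disposes of this proposition in a single sentence (``completely parallel to Proposition~\ref{prop:projective}''). The only substantive point you have added, and rightly so, is that since $\Fl_3$ has Picard rank two there is no single operator with a clean spectral formula analogous to $(1-A_{\rm com})^{N+1}=Q$, so one must pass to a generic linear combination $c_1A_1+c_2A_2$ and verify its discriminant is not identically zero by evaluation from the explicit $6\times 6$ matrices of \S\ref{subsec:flag}; this is exactly the step the paper leaves implicit, and the rest of your argument (polynomiality of $A_i|_{t=0}$ in $Q$ with no $q$-dependence, hence convergence; Theorem~\ref{thm:convergence}; openness of semisimplicity) mirrors the $\C\Proj^N$ case verbatim.
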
 
\begin{proof} 
The proof is completely parallel to Proposition \ref{prop:projective}. 
It suffices to use the computation in \S \ref{subsec:flag} 
instead of Proposition \ref{prop:A_projective}. 
\end{proof} 

\begin{remark} 
The Hirzebruch-Riemann-Roch theorem of Givental-Tonita \cite{Givental-Tonita} 
says that the localization at $q=1$ of the 
Lagrangian cone of the true quantum $K$-theory 
coincides with the Lagrangian cone of the fake quantum $K$-theory. 
Because the Lagrangian cone of the fake quantum $K$-theory 
is a symplectic transform of the Lagrangian cone 
of the quantum cohomology, it follows that the $F$-manifolds 
associated to the quantum cohomology and 
the quantum $K$-theory are isomorphic under a formal change of co-ordinates. 
(Note that the manifold of tangent spaces 
to an overruled Lagrangian cone is equipped with 
the structure of an $F$-manifold, independent of the choice of a 
polarization.) 
Therefore, under convergence assumption, we should expect that 
the quantum $K$-ring is generically semisimple if and only if the 
quantum cohomology is generically semisimple. 
\end{remark} 

\appendix 
\raggedbottom
\section{Table of $\C\Proj^2$ Invariants}

We record some $K$-theoretic GW invariants of $\C\Proj^2$
for degree one, two and three. 
The $(i+1,j+1)$ entry of the matrices below gives the correlator 
\[
\left\langle
\overbrace{[H],\dots, [H]}^{i}, 
\overbrace{[{\rm pt}],\dots, [{\rm pt}]}^{j}
\right\rangle_{0,i+j,d}^{\C\Proj^2} 
\]
where $[H] = 1 - [\cO(-1)]$ is the class of a hyperplane 
and $[{\rm pt}] = (1 - [\cO(-1)])^2$ is the class of a point. 

\vspace{10pt}

\underline{Degree-one invariants}: 
\scriptsize 
\[ \left[ \begin {array}{ccccccccccccccc} 1&1&1&0&0&0&0&0&0&0&0&0&0&0&0
\\\noalign{\medskip}1&1&1&0&0&0&0&0&0&0&0&0&0&0&0
\\\noalign{\medskip}1
&1&1&0&0&0&0&0&0&0&0&0&0&0&0
\\\noalign{\medskip}1&1&1&0&0&0&0&0&0&0&0&0
&0&0&0\\\noalign{\medskip}1&1&1&0&0&0&0&0&0&0&0&0&0&0&0
\\\noalign{\medskip}1&1&1&0&0&0&0&0&0&0&0&0&0&0&0
\\\noalign{\medskip}1
&1&1&0&0&0&0&0&0&0&0&0&0&0&0
\\\noalign{\medskip}1&1&1&0&0&0&0&0&0&0&0&0
&0&0&0\\\noalign{\medskip}1&1&1&0&0&0&0&0&0&0&0&0&0&0&0
\\\noalign{\medskip}1&1&1&0&0&0&0&0&0&0&0&0&0&0&0
\\\noalign{\medskip}1
&1&1&0&0&0&0&0&0&0&0&0&0&0&0
\\\noalign{\medskip}1&1&1&0&0&0&0&0&0&0&0&0
&0&0&0\\\noalign{\medskip}1&1&1&0&0&0&0&0&0&0&0&0&0&0&0
\\\noalign{\medskip}1&1&1&0&0&0&0&0&0&0&0&0&0&0&0
\\\noalign{\medskip}1
&1&1&0&0&0&0&0&0&0&0&0&0&0&0\end {array} 
\right] 
\]
\normalsize 

\vspace{10pt}

\underline{Degree-two invariants}: 
\scriptsize 
\[
\left[ \begin {array}{cccccccccccccccccccc} 1&1&1&1&1&1&0&0&0&0&0&0&0
&0&0&0&0&0&0&0
\\\noalign{\medskip}1&1&1&1&1&2&0&0&0&0&0&0&0&0&0&0&0&0&0
&0\\\noalign{\medskip}1&1&1&1&0&4&0&0&0&0&0&0&0&0&0&0&0&0&0&0
\\\noalign{\medskip}1&1&1&2&-4&8&0&0&0&0&0&0&0&0&0&0&0&0&0&0
\\\noalign{\medskip}1&1&1&8&-16&16&0&0&0&0&0&0&0&0&0&0&0&0&0&0
\\\noalign{\medskip}1&1&0&32&-48&32&0&0&0&0&0&0&0&0&0&0&0&0&0&0
\\\noalign{\medskip}1&1&-8&112&-128&64&0&0&0&0&0&0&0&0&0&0&0&0&0&0
\\\noalign{\medskip}1&1&-48&352&-320&128&0&0&0&0&0&0&0&0&0&0&0&0&0&0
\\\noalign{\medskip}1&2&-208&1024&-768&256&0&0&0&0&0&0&0&0&0&0&0&0&0&0
\\\noalign{\medskip}0&12&-768&2816&-1792&512&0&0&0&0&0&0&0&0&0&0&0&0&0
&0\\\noalign{\medskip}-9&72&-2560&7424&-4096&1024&0&0&0&0&0&0&0&0&0&0&0
&0&0&0\\\noalign{\medskip}-60&352&-7936&18944&-9216&2048&0&0&0&0&0&0&0
&0&0&0&0&0&0&0\\\noalign{\medskip}-292&1472&-23296&47104&-20480&4096&0
&0&0&0&0&0&0&0&0&0&0&0&0&0\\\noalign{\medskip}-1216&5504&-65536&114688
&-45056&8192&0&0&0&0&0&0&0&0&0&0&0&0&0&0\\\noalign{\medskip}-4576&
18944&-178176&274432&-98304&16384&0&0&0&0&0&0&0&0&0&0&0&0&0&0
\\\noalign{\medskip}-16000&61184&-471040&647168&-212992&32768&0&0&0&0&0
&0&0&0&0&0&0&0&0&0\\\noalign{\medskip}-52864&187904&-1216512&1507328&-
458752&65536&0&0&0&0&0&0&0&0&0&0&0&0&0&0\\\noalign{\medskip}-166912&
553984&-3080192&3473408&-983040&131072&0&0&0&0&0&0&0&0&0&0&0&0&0&0
\\\noalign{\medskip}-507648&1579008&-7667712&7929856&-2097152&262144&0
&0&0&0&0&0&0&0&0&0&0&0&0&0\\\noalign{\medskip}-1496064&4374528&-
18808832&17956864&-4456448&524288&0&0&0&0&0&0&0&0&0&0&0&0&0&0
\end {array} \right] 
\]
\normalsize 

\vspace{10pt} 

\underline{Degree-three invariants}: 
\tiny 
\[
 \left[ \begin {array}{ccccccccccccc} 1&1&1&1&1&1&1&-2&12&0&0&0&0
\\\noalign{\medskip}1&1&1&1&1&1&4&-24&36&0&0&0&0
\\\noalign{\medskip}1&1&1&1&1&0&40&-126&108&0&0&0&0
\\\noalign{\medskip}1&1&1&1&1&-30&279&-540&324&0&0&0&0
\\\noalign{\medskip}1&1&1&1&15&-333&1539&-2106&972&0&0&0&0
\\\noalign{\medskip}1&1&1&1&243&-2457&7398&-7776&2916&0&0&0&0
\\\noalign{\medskip}1&1&1&-42&2403&-14742&32562&-27702&8748&0&0&0&0
\\\noalign{\medskip}1&1&1&-819&18063&-77760&134865&-96228&26244&0&0&0&0
\\\noalign{\medskip}1&1&70&-9003&114453&-375435&534357&-328050&
78732&0&0&0&0
\\\noalign{\medskip}1&1&1525&-74643&645165&-1699299&
2047032&-1102248&236196&0&0&0&0
\\\noalign{\medskip}1&-74&19039&-
518661&3339549&-7322076&7637004&-3661038&708588&0&0&0&0
\\\noalign{\medskip}1&-1913&177040&-3187587&16199109&-30351186&
27890811&-12045996&2125764&0&0&0&0
\\\noalign{\medskip}
133&-27116&
1364272&-17889387&74657619&-121936185&100088055
&-39326634&6377292&0&0&0&0
\\\noalign{\medskip}3433&-281948&9211867&-93599739&330142959&-
477411165&353939706&-127545840&19131876&0&0&0&0
\\\noalign{\medskip}50155&-2402093&56356033&-463118232&1410975855&-
1829219922&1236131766&-411335334&57395628&0&0&0&0
\end {array}
 \right] 
\]
\normalsize 
Notice that the $K$-theoretic GW invariants 
$1 = \corr{[\rm pt],[\rm pt]}_{0,2,1}$, 
$1 = \corr{[\rm pt],\dots,[\rm pt]}_{0,5,2}$, 
$12 = \corr{[\rm pt],\dots,[\rm pt]}_{0,8,3}$ 
coincide with the corresponding cohomological GW invariants 
(the number of degree $d$ rational curves passing through 
$3d-1$ points in $\C\Proj^2$). 

\vspace{10pt} 
\noindent 
{\bf Acknowledgments.} 
We thank Changzheng Li and Leonardo C. Mihalcea for directing us 
to references on the finiteness results. We also thank 
Toshiaki Maeno for a very helpful discussion on the small quantum 
$K$-ring of flag manifolds. 
H.I.~thanks Professor Hiraku Nakajima for organizing
a postdoc seminar in 2005, where he learned quantum $K$-theory. 
H.I. is supported by JSPS Grant-in-Aid for Scientific Research (C)
25400069, T.M. is supported by JSPS Grant-in-Aid, and T.M. and
V.T. acknowledge the World Premiere International Research Center
Initiative (WPI Initiative), Mext, Japan.


\begin{thebibliography}{00}

 \bibitem{Braverman-Finkelberg:type_A}
Braverman, A., Finkelberg, M. 
 ``Finite difference quantum Toda lattice via 
equivariant $K$-theory.'' 
\textit{Transform. Groups} 10, no. 3--4 (2005) : 363--386.
 

\bibitem{Braverman-Finkelberg:othertypes} 
Braverman, A., Finkelberg, M. 
``Semi-infinite Schubert varieties and 
quantum $K$-theory of flag manifolds.'' 
arxiv:1111.2266.

\bibitem{Buch-Mihalcea} 
Buch, A.~S., Mihalcea, L.~C., 
``Quantum $K$-theory of Grassmannians.'' 
Duke Math.\ J.\ 156 no.3 (2011) : 501--538. 
 
\bibitem{BCMP:cominuscule} 
 Buch, A.~S., Chaput, P.-E., Mihalcea, L.~C., Perrin, N. 
 ``Finiteness of cominuscule quantum K-theory.'' 
Ann.\ Sci.\ Ec.\ Norm.\ Super.\ (4) 46 no. 3 (2013) : 477--494.  

 \bibitem{BCMP} 
 Buch, A.S., Chaput, P.-E., Mihalcea, L.~C., Perrin, N. 
 ``Rational connectedness implies finiteness of quantum K-theory.'' 
 arxiv:1305.5722.
 
\bibitem{ctom}
Coates, T. 
\textit{Riemann-Roch theorems in Gromov-Witten theory}, 
2003, Ph.D. thesis, UC Berkeley.

\bibitem{Coates-Givental}
Coates, T.,  
Givental, A. 
``Quantum Riemann-Roch, Lefschetz and Serre.'' 
\textit{Ann. of Math. (2)} 165, no. 1 (2007): 15--53.


    \bibitem{Dubrovin:2DTFT}
    Dubrovin, B.
     \textit{Geometry of $2$D topological field theories},
      Lecture Notes in Math. 1620, Berlin: Springer, 1996, 120--348.


\bibitem{Givental:equivariantGW}
Givental, A. B.
``Equivariant Gromov-Witten Invariants.'' 
\textit{Internat. Math. Res. Notices} , no. 13 (1996): 613--663.


\bibitem{Givental:WDVVK} 
Givental, A. 
``On the WDVV-equation in quantum $K$-theory.'' 
      \textit{Michigan Math. J.} 48 (2000): 295--304.
  
\bibitem{Givental:quantization} 
Givental, A. 
``Gromov--Witten invariants and quantization of quadratic
        Hamiltonians.'' 
\textit{Dedicated to the memory of I. G.\ Petrovskii on the occasion of his
        100th anniversary},
\textit{Mosc. Math. J.}, 
Vol 1, no.4 (2001): 551--568, 645. 


\bibitem{Givental-Lee}
Givental, A., Lee, Y.- P. 
``Quantum $K$-theory on flag manifolds, 
             finite-difference Toda lattices
             and quantum groups.'' 
     \textit{Invent.\ Math.} 151, no. 1 (2003): 193--219. 


\bibitem{Givental-Tonita}
Givental, A., Tonita, V. 
``Hirzebruch--Riemann--Roch theorem in true 
genus-0 quantum K-theory.'' 
arxiv:1106.3136. 

\bibitem{Guest}
Guest, M. A.
   ``Quantum cohomology via $D$-modules.''
   \textit{Topology} 44, no. 2 (2005): 263--281.


\bibitem{In}
Ince, E. L.
\textit{Ordinary differential equations}, New York: Dover, 1944.

\bibitem{kaw} 
Kawasaki, T.
 ``The Riemann-Roch theorem for complex V-manifolds.'' 
 \textit{Osaka J. Math.} 16, no. 1  (1979): 151--159. 
 
 \bibitem{Kirillov-Maeno} 
 Kirillov, A.~N., Maeno, T.  
 ``A note on quantum $K$-theory of flag varieties.'' 
in preparation.  

\bibitem{Lee:thesis}
Lee, Y.- P. 
\textit{Quantum $K$-theory}, 1999, Ph.D. thesis, UC Berkeley.


\bibitem{Lee:foundation}
Lee, Y.- P. 
``Quantum $K$-theory I: foundations.'' 
  \textit{Duke.\ Math.\ J. } 121, no. 3 (2004): 389--424. 

\bibitem{Lee-Pandharipande} 
Lee, Y.-P., Pandharipande, R. 
``A reconstruction theorem in quantum cohomology 
and quantum $K$-theory.'' 
\textit{Amer. \ J. \ Math.} 126, no. 6 (2004): 1367--1379.

\bibitem{Lenart-Maeno} 
Lenart, C., Maeno, T. 
``Quantum Grothendieck Polynomials.'' 
arXiv:math/0608232. 

\bibitem{Nishida}
Nishida, T.
   \textit{Nonlinear hyperbolic equations and related topics in fluid
   dynamics},
   Publications Math\'ematiques d'Orsay, No. 78-02
   :D\'epartement de Math\'ematique, Universit\'e de Paris-Sud,
   Orsay,
  1978. 

\bibitem{Teleman:classification} 
Teleman, C. 
``The Structure of 2D Semi-simple Field Theories.''  
\textit{Invent.\ Math.} 188, no. 3 (2012): 525--588.  

\bibitem{to1}
Tonita, V.
 ``Twisted orbifold Gromov-Witten invariants.'' to appear in Nagoya Math. J., 
 arxiv:1202.4778.

\bibitem{to2}
Tonita, V.
 ``A virtual Kawasaki Riemann Roch formula.''  to appear in Pacific J. Math., 
 arxiv:1110.3916. 

\end{thebibliography}
\end{document}